\newcommand \bse {\begin{subequations}}
\newcommand \ese {\end{subequations}}
\newcommand \Phif {\Phi^\flat} 
\newcommand \Phifo {\Phi^\flat_{i0}} 
\newcommand \Phis {\Phi^\sharp} 
\newcommand \map {\widetilde{\Phif_{i0}}}
\newcommand \SMdomain {\Gamma}
\newcommand \Bcal {\mathcal B}
\newcommand \Bzero {\Bcal({\delta_0})} 
\newcommand \Bone {\Bcal({\delta_1})} 
\newcommand \RR   {\mathbb{R}}
\newcommand \RN   {\RR^N}  
\newcommand \be  {\begin{equation}}
\newcommand \ee  {\end{equation}} 
\newcommand \del {{\partial}}
\newcommand \eps {\epsilon}
\newcommand \lam {\lambda} 
\newcommand \lamb {\mybar \lambda} 
\newcommand \mui  {\mu_i}
\newcommand \muis {\mu_i^\sharp}
\newcommand \muiN {\mu_i^n}
\newcommand \HH   {{\mathcal H}}
\newcommand \AAA   {{\mathcal A}}
\newcommand \BB   {{\mathcal B}}
\newcommand \WW   {{\mathcal W}}
\newcommand \SSS   {{\mathcal S}}
\newcommand \MM   {{\mathcal M}}
\newcommand \overu {{\mybar u}} 
\newcommand \bei {\begin{itemize}}
\newcommand \eei {\end{itemize}}
\newtheorem{definition}{Definition}[section]
\newtheorem{lemma}[definition]{Lemma}
\newtheorem{theorem}[definition]{Theorem}
\newtheorem{proposition}[definition]{Proposition}
\numberwithin{equation}{section}
\newcommand{\mybar}[1]{\overline{#1}}
\newcommand{\myW}{W}
\newcommand{\Sgn}{\operatorname{sign}}
\def\muj   {{\mu_j}}  
\def\mujf  {\mu_j^\flat} 
\def\mujs  {\mu_j^\sharp}
\def\Phiis {\Phi^\sharp_{i}}
\def\Phiif {\Phif_{i}}
\def\mn    {{-\natural}}
\def\nat   {{\natural}} 
\def\OO    {{\mathcal O}}
\def\Se    {{\mathcal X}}
\newcommand	\CDown	{{C^\downarrow}}
\newcommand	\Cpm 	{{C^\downarrow_\pm}}
\newcommand	\Npm		{{N^\downarrow_\pm}}
\newcommand	\Rm		{{R^\downarrow_-}}
\newcommand	\Cm		{{C^\uparrow_-}}
\newcommand	\CUp		{{C^\uparrow}}
\newcommand	\kappaL		{k_L}
\newcommand	\kappaM		{k_M}
\newcommand	\kappaR		{k_R}
\newcommand	\kappaLless		{k_L^<}
\newcommand	\kappaMless		{k_M^<}
\newcommand	\kappaRless		{k_R^<}
\newcommand	\kappaLgrt		{k_L^>}
\newcommand	\kappaMgrt		{k_M^>}
\newcommand	\kappaRgrt		{k_R^>}
\newcommand  \Cff                    {C^{\flat\flat}}
\begin{document} 

\title{The mathematical theory of splitting-merging patterns
\\
 in phase transition dynamics}

\author{Eva Kardhashi$^1$, Marc Laforest$^1$, and Philippe G. LeFloch$^3$}
\footnotetext[1]{D\'epartement de math\'ematiques et statistique, Universit\'e de Montr\'eal,
	Montr\'eal, Qu\'ebec, Canada, H3C 3A7. 
	\\
	Email: {\tt evakard@dms.umontreal.ca, marc.laforest@polymtl.ca}.}
\footnotetext[3]{ Laboratoire Jacques-Louis Lions \& Centre National de la Recherche Scientifique, 
Sorbonne Universit\'e, 4 Place Jussieu, 
75252 Paris, France. Email: {\tt contact@philippelefloch.org}. 
\newline 
2000\textit{\ AMS Classification} Primary. 35L65, 74XX. Secondary. 76N10, 76L05. 
\ 
\textit{Key Words and Phrases.} Nonlinear hyperbolic system; nonclassical Riemann solver; 
undercompressive shock wave; kinetic relation; nucleation criterion; splitting-merging pattern.
}
\date{May 2021}

\maketitle

\begin{abstract} For nonlinear hyperbolic systems of conservation laws in one space variable, we establish the existence of nonclassical entropy solutions exhibiting nonlinear interactions between shock waves with strong strength. The proposed theory is relevant in the theory of phase transition dynamics, and the solutions under consideration enjoy a splitting--merging pattern, consisting of (compressive) classical and (undercompressive) nonclassical waves, interacting together as well as with classical waves of smaller strength. Our analysis is based on three novel ideas.  First, a generalization of Hayes--LeFloch's nonclassical Riemann solver is introduced for systems and is based on prescribing, on one hand, a kinetic relation for the propagation of nonclassical undercompressive shocks and, on the other hand, a nucleation criterion that selects between classical and nonclassical behavior. 
Second, we extend LeFloch-Shearer's theorem to systems and we prove that the presence of a nucleation condition implies that only a finite number of splitting and merging cycles can occur. Third, our arguments of nonlinear stability build upon recent work by the last two authors who identified a natural total variation functional for scalar conservation laws and, specifically, for systems of conservation laws  we introduce here novel  functionals which measure the total variation and wave interaction of nonclassical and classical waves. 
\end{abstract}

\tableofcontents  

\section{Introduction}
\label{Sec1}

\subsection{Objective of this paper}

We consider the initial value problem associated with nonlinear hyperbolic systems of conservation laws in one space variable, and 
we establish the existence of {\sl nonclassical entropy solutions} (in the sense introduced in \cite{LeFloch-book}), 
which, for instance, arise in complex fluid flows in the limit of vanishing viscosity and capillarity. The solutions under consideration in the present paper 
exhibit a ``splitting--merging pattern'' and consist of finitely many (classical or nonclassical) big waves and classical small waves. More precisely, our analysis relies on a new nonclassical Riemann solver which is motivated by the theory of phase transition dynamics and is based on
\bei
\item a {\sl kinetic relation} for the propagation of nonclassical undercompressive shocks and, 
\eei 
\bei
\item optionally, a {\sl nucleation criterion} that selects between classical and nonclassical behavior. 
\eei 
All solutions satisfy a single entropy inequality associated with a given entropy pair, and we note that 
the kinetic relation is equivalent to imposing the rate of entropy dissipation across nonclassical shocks. 
The existence of nonclassical entropy solutions for {\sl scalar} equations was recently established for the initial value problem 
by the last two authors \cite{LL} who, building on the earlier work \cite{ABLP,BLP1,BLP2,LeFloch-book,LeFlochShearer}, 
succeeded in identifying a ``natural'' total variation functional adapted to deal with nonclassical shock waves and their interactions.  
By natural, we mean that the construction of the total variation relies explicitly on the main property of
the kinetic function, namely on the contraction property of its iterates.
The present paper provides a new and significant development to this theory and we are able to 
\bei

\item  construct nonclassical entropy solutions to nonlinear strictly hyperbolic {\sl systems,} and  

\item exhibit the {\sl splitting--merging} phenomena for systems, which illustrates the complexity of fluid flows involving phase transitions. 

\eei
To achieve these results, we introduce here new functionals that suitably measure the total variation and wave interaction of 
nonclassical and classical waves. When the nucleation criterion is introduced,
we show that these functionals are strictly decreasing by an amount proportional to the strength of the nucleation criteria.

Recall that the kinetic relation is an additional jump relation required only for nonclassical undercompressive shocks. 
It generalizes to general systems of conservation laws a notion originally introduced for a (hyperbolic-elliptic) model of phase transition dynamics 
first studied by Abeyaratne and Knowles \cite{AK1,AK2}, 
and Truskinovsky \cite{Trusk1,Trusk2}, in relation to earlier work by Slemrod \cite{Slemrod1,Slemrod2}. 
The notion of a kinetic relation was introduced for {\sl general} hyperbolic systems by LeFloch and collaborators, and was then investigated mathematically \cite{HL0,HL1,HL2,LeFloch1,LeFloch-freiburg,LeFloch-book,LeFloch-oslo}.  
In addition, for scalar conservation laws LeFloch and Shearer \cite{LeFlochShearer} observed that 
the classical solution may be physically relevant in certain applications {\sl even in a range} where a nonclassical solution is available. 
This observation led them to propose a nonclassical Riemann solver with {\sl kinetics and nucleation,}
which allows for nonclassical behavior only in Riemann problems having sufficiently `large'' initial jump.  
This proposal was motivated by studies on the dynamics of thin films, in which capillarity effects 
play a driving role on propagating interfaces; cf.~Bertozzi, M\"unch, and Shearer \cite{BMS} and Bertozzi and Shearer \cite{BS}. 


\subsection{Background on nonclassical shocks and phase boundaries}
 
We thus consider nonlinear hyperbolic systems of conservation laws in one space dimension, of the general form  
\be 
\label{11}
\del_t u + \del_x f(u) = 0, \quad u=u(t,x) \in \Bzero 
\ee
and, as is customary, we focus attention on weak solutions understood in the sense of distributions which are assumed to take values near a given constant state in $\RN$, normalized to be the origin. Throughout, $\Bzero \subset \RN$ denotes the ball centered at the origin and with small radius $\delta_0>0$, while the flux $f: \Bzero \subset \RN$ is a smooth ($C^3$, say) vector-valued map, 
In addition, weak solutions of interest should satisfy the {\sl entropy inequality} 
\be
\label{12}
\del_t U(u) + \del_x F(u) \leq 0,
\ee
in which the entropy $U: \Bzero \to \RR$ is assumed to be strictly convex, and the associated entropy flux $F: \Bzero \subset \RN$ satisfies, by definition, $\nabla F(u)^T = \nabla U(u)^T \, Df(u)$. 

Let us first stress that our motivation for imposing the {\sl single} entropy inequality \eqref{12} comes from the fact 
that most (if not all) regularizations of \eqref{11} arising in applications, of the form  
$$
\del_t u^\eps + \del_x f(u^\eps) = R^\eps = R^\eps \big(u^\eps, \eps \, u_x^\eps, \eps^2 \, u_{xx}^\eps, \ldots\big) 
$$
are ``conservative'' and "entropy dissipative'' in the limit $\eps \to 0$ and, formally at least, lead to weak solutions satisfying \eqref{11}--\eqref{12}. (See \cite[Chap.~1]{LeFloch-book} for further details.)

Furthermore, we assume that \eqref{11} is a {\sl strictly hyperbolic} system, for which the Jacobian matrix $A(u) := Df(u)$ 
has real distinct eigenvalues $\lam_1(u) < \ldots < \lam_N(u)$ and corresponding basis of left-- and right--eigenvectors $l_j(u), r_j(u)$. 
Following Hayes and LeFloch \cite{HL1,HL2}, a nonlinearity condition on the flux $f$ is also imposed which roughly speaking 
requires that the flux $f$ has ``at most one inflection point''  along every eigenvector direction  
--- as this is a typical feature of many systems of interest in continuum physics, especially phase transition models. More precisely, 
we assume that each characteristic field of \eqref{11} is either (with $u \in \Bzero$) 
\bei 

\item {\sl genuinely nonlinear} (GNL), $j \in J_1$, that is, $\nabla \lam_j(u) \cdot r_j(u) >0$ after normalization, or 

\item {\sl linearly degenerate} (LD), $j \in J_2$, that is, $\nabla \lam_j(u) \cdot r_j(u) \equiv 0$, or 

\item {\sl concave-convex} (C-C), $i \in J_3$. 

\eei 
\noindent In the latter case by definition (see \cite{HL1,HL2}), the scalar-valued function $m_i(u) := (\nabla \lam_i\cdot r_i) (u)$ does not keep a constant sign, but $\MM_i = \big\{ u \in \Bzero \, |\,  m_i(u) = 0  \big\}$
is a smooth manifold with dimension $N-1$, such that the vector field $r_i$ is transverse to $\MM_i$ with 
\be
\text{concave-convex (C--C):} \quad  \nabla m_i(u) \cdot r_i (u) > 0.
\ee
Convex-concave fields can be defined similarly and, for simplicity in the presentation,
 will not be considered in the present paper. 

We are now in a position to consider the initial value problem associated with \eqref{11}, whose simplest version 
is provided by the so--called {\sl Riemann problem,} for which the 
initial data consist of two constant states $u_l,u_r \in \Bone$ initially separated by a single jump discontinuity: 
\be 
u(0,x) = u_0(x) = \begin{cases}
u_l, & x <0, 
\\
u_r, & x > 0. 
\end{cases}
\ee 
Hayes and LeFloch \cite{HL1,HL2} considered  weak solutions to the Riemann problem (in the class of piecewise Lipschitz continuous functions, say)  
associated with general systems with GNL, LD, or CC fields, and observed that solutions satisfying the single entropy inequality \eqref{12}, 
are {\sl not uniquely} determined by their initial data.  In order to select a unique nonclassical solution to the Riemann problem, it is necessary   to impose an additional selection criterion, called a {\sl kinetic relation,}
which selects, within a two--dimensional wave set, a unique wave curve associated with any state (in $\Bone$ with $\delta_1 \leq \delta_0$, say).
This selection criteria for the CC fields, together with the usual solutions for the GNC and LD fields, yields the 
 {\sl nonclassical Riemann solver}, as described in \cite{HL1,HL2}.  


\subsection{Main results in this paper}

In the present paper, we go beyond Hayes and LeFloch's construction and we introduce a new 
nonclassical Riemann solver for systems, referred to as 
the {\sl nonclassical Riemann solver with kinetics and nucleation.} 
We investigate its properties, especially from the point of view of wave interactions. 
A main result established below is the existence of a large class of splitting-merging solutions 
containing one or two big (classical or nonclassical) waves, propagating within a background of small waves.
In short, we will establish the following result:  Consider the initial value problem associated with a 
strictly hyperbolic system \eqref{11} with GNL, LD, or CC fields. 
Suppose that the initial data is a perturbation of a large classical wave which splits into a nonclassical wave and a big classical. 
Then, there exists a globally defined in time solution with finitely many large waves. 
The two big waves can merge and, later on, split due to interactions between themselves or with small waves. 
In presence of the nucleation criterion, this splitting-merging phenomena only takes place finitely many times, 
but can continue forever if the nucleation criterion is removed.  

Let us stress that the long-term objective is to establish an existence result for initial data of small total variation
with nonclassical shocks. As a step towards this objective,
this research has treated in detail small perturbations to a single strong isolated nonclassical shocks
using LeFloch's theory of kinetic functions in order to constrain the family of admissible shocks. The main
novelties are the new measure of shock strength, extending a previous study
of scalar problems \cite{LL}, and a new weighted interaction potential 
inspired by earlier works \cite{Liu-monograph,IL}. This work explicitly identifies the 
relationship between the strength of the nonclassical shocks and of the perturbation,
even as the strength of the nonclassical wave vanishes. Finally, it is shown
that a nucleation condition provides additional stability to the splitting-merging process, thereby generalizing
a result of LeFloch and Shearer for scalar conservation laws~\cite{LeFlochShearer}. The 
techniques described here appear to be applicable to the analysis of solutions to
problems with general initial data.

An outline of this paper is as follows. In Section~2, we introduce our nonclassical Riemann solver with kinetics and nucleation. 
Section~3 contains the construction of the splitting-merging functional and presents our generalized wave strength and total variation functionals. Section~4 is devoted to showing that the nucleation condition implies that only finitely many splittings and mergings may occur.  


\section{Nonclassical Riemann solver with kinetics and nucleation}
\label{Sec2} 

\subsection{Background and notation} 

Solutions to the Riemann problem are made of shock waves and rarefaction waves, and we will first 
introduce some basic notation relative to such waves. The results can be found in~\cite{HL2} and in
the monograph of LeFloch~\cite{LeFloch-book}.   
 
A left-hand state $u_-$ being fixed, we consider the Hugoniot set consisting of all right-hand states 
$u_+$ satisfying, for some $\lam$, the condition 
$- \lam \, \bigl(u_+ - u_-\bigr) + f(u_+) - f(u_-) =0.$   
It is easily checked that there exist $\delta_1 < \delta_0$ and $\eps>0$ such that 
for each $u_- \in \Bone$ the following holds. The Hugoniot set  
can be decomposed into $N$ curves $\HH_j(u_-) = \Big\{v_j(s; u_-) \, / \, s \in (-\eps, \eps) \Big\}$  
($1 \leq j \leq N$) depending smoothly on $s$ and $u_-$, such that 
\be
v_j(s;u_-) = u_-  + s \, r_j(u_-) 
              + {1 \over 2}  \, s^2\, \big( Dr_j(u_-)  \, r_j \big)(u_-) 
	    + O(s^3) 
\label{19} 
\ee
and, for the shock speed $\lam = \lamb_j(s;u_-)$,   
$$
\aligned 
\lamb_j(s;u_-) = &\lam_j(u_-)
 + {1 \over 2} \, s \, \bigl( \nabla \lam_j \cdot r_j \bigr)(u_-)  \\
& + {1 \over 6}  \, s^2\, \Big( \bigl(\nabla ( \nabla \lam_j \cdot r_j) \cdot r_j \bigr)
             + {\nabla \lam_j \cdot r_j \over 2 } \, l_i (Dr_j  \, r_j) \Big) (u_-) + O(s^3). 
\endaligned 
\label{110}
$$  
Recall that we have assumed a normalization of the eigenvectors $r_j(u)$ for genuinely nonlinear fields
for which $\nabla \lambda_j \cdot r_j(u) > 0$ for all $u \in \BB(\delta_0)$.

Rarefaction waves are Lipschitz continuous, self-similar solutions of \eqref{11}, determined by integral curves
of the vector fields $r_j$. It is easily checked that, for $\delta_1 < \delta_0$ and $\eps>0$,
and for all $u_- \in \BB(\delta_1)$ the following holds. 
For each $j \in \bigl\{1, \ldots, N\bigr\}$ the integral curve $\OO_j(u_-)= \Big\{w_j(s; u_-) \, / \, s \in (-\eps, \eps) \Big\}$ 
depends smoothly upon $u_-$ and $s$ and satisfies  
\be
    w_j(s;u_-) = u_- + s \, r_j(u_-) + {1 \over 2}  \, s^2\, (Dr_j \, r_j)(u_-) + O(s^3)
\label{120}
\ee  
and 
$$
\lam_j(w_j(s;u_-)) = \lam_j(u_-) 
+ s \, \big( \nabla \lam_j \cdot r_j \big) (u_-) 
+ {1 \over 2} \, s^2 \bigl(\nabla ( \nabla \lam_j \cdot r_j) \cdot r_j\bigr)
            (u_-) + O(s^3). 
\label{121} 
$$

To ensure the uniqueness of the weak solution, we impose the entropy 
inequality \eqref{12} associated with a given strictly convex entropy pair $(U,F)$. 
For a shock wave connecting $u_-$ to $u_+$, the entropy inequality imposes 
that the rate of entropy dissipation be non-positive, i.e.  
\be
E(u_-, u_+) := - \lam \, \bigl(U(u_+) - U(u_-)\bigr) + F(u_+) - F(u_-) \leq 0. 
\label{123} 
\ee
Hence, we have a constraint on the sign of the entropy dissipation 
$\mybar E(s; u_-) := E(u_-, v_i(s; u_-))$ 
along the $j$-Hugoniot curve issuing from $u_-$. 
In fact, one has 
\be
\mybar E(s;u_-) = {s^3 \over 12} \left(r_j^T \,  D^2 U \, r_j \right)(u_-) 
\, \big( \nabla \lam_j \cdot r_j  \big) (u_-) + O(s^4).  
\label{124}
\ee

Recalling Lax shock inequalities
\be
\lam_j(u_-) \geq \lam \geq \lam_j(u_+)  
\label{126} 
\ee
for a shock wave connecting $u_-$ to $u_+$ at the speed $\lam$, we state the 
following standard result about genuinely nonlinear fields. 
Consider the $j$-Hugoniot curve $s \mapsto u_+=v_j(s; u_-)$ 
issuing from a state $u_-$ and associated with a genuinely nonlinear 
family. Then, the shock speed $s \mapsto \lamb_j(s;u_-)$ is an increasing 
function of $s$, and Lax shock inequalities  
are equivalent to imposing the entropy inequality \eqref{123} and select the part $s<0$ of the 
Hugoniot curve. Both inequalities above are violated for $s>0$. 
The composite curve $\WW_j(u_-):= \SSS_j(u_-) \cup \OO_j(u_-)$,
 called the $j$-wave curve issuing from $u_-$, is defined as 
$$
\aligned 
& \WW_j(u_-) = \SSS_j(u_-) \cup \OO_j(u_-) 
=: \bigl\{ \psi_j(s ; u_-) \, / \, s \in (-\eps, \eps)\bigr\}, 
\\
& \psi_j(s; u_-) = 
\begin{cases} 
v_j(s; u_-),      &  s \in (-\eps, 0], 
\\
w_j(s; u_-),      &  s \in [0, \eps). 
\end{cases} 
\endaligned 
\label{128} 
$$
The mapping $\psi_j : (-\eps, \eps) \times \Bone \to \RN$ 
admits continuous derivatives up to second-order and 
bounded third-order derivatives in $s$ and $u_-$, with 
$$
\psi_j(s; u_-) = u_- + s \, r_j(u_-) + {1 \over 2}  \, s^2\, (Dr_j \, r_j)(u_-) + O(s^3). 
\label{129} 
$$ 


\subsection{Construction based on Lax shock inequalities} 

We now turn our attention to the characteristic fields $i \in J_3$ that are not globally genuinely nonlinear and such that 
genuine nonlinearity condition fails at one point along each wave curve.
In this case, the Lax shock inequalities \eqref{126} still single out a unique Riemann solution, 
but the entropy inequality \eqref{123} is no longer sufficiently discriminating in itself
(as we will discuss in the following subsection).   
It is convenient now to use a global parameter defined below. 
 
We now assume that the scalar-valued function 
$m_i(u) := \nabla \lam_i(u) \cdot r_i(u)$ 
does not keep a constant sign and that  
$\MM_i = \big\{ u \in \BB(\delta_0) \, |\,  m_i(u) = 0  \big\}$ 
is a smooth affine manifold with dimension $N-1$ such that
the vector field $r_i$ is {\sl transverse to the manifold} $\MM_i$
and that  
$$
\text {Concave-convex (CC) field: } \nabla m_i(u) \cdot r_i (u) > 0,   \quad u \in \BB(\delta_0). 
\label{22} 
$$  
The transversality assumption above implies that   
the wave speed $\lam_i$ has  
one critical point (a minimum) along each wave curve, 
that is,
$\nabla \lam_i(u) \cdot r_i(u) =0$ if and only if $m_i(u)=0$. 
To ensure that $\MM_i \cap \Bone \neq \emptyset$, and hence that the problem 
is not reduced to the genuinely nonlinear case, we assume that the origin belongs to
$\MM_i$, possibly at the cost of an additional translation of $u$.

To parametrize the Hugoniot curves
$\HH_i(u_-)= \big\{v_i(m;u_-)\big\}$ and integral curves 
$\OO_i(u_-)= \big\{w_i(m;u_-)\big\}$,
we introduce a global parameter $\mu_i(u)\in \RR$ 
which depends smoothly upon $u$ and is strictly monotone along the wave curves, specifically
$$
\nabla \mu_i(u) \cdot r_i(u) \neq 0, \quad u \in \Bzero
\label{23} 
$$  
and 
$$
\mu_i(u) = 0 \quad \text{ if and only if } \quad m_i(u) = 0.
\label{24}
$$ 
The obvious choice is to set the parameter $\mu_i$ now coincides with the parameter $m_i$, that is, 
\be
\mu_i \bigl(v_i(m;u_-)\bigr) = m, \text{ for $m \leq m_i(u_-)$},  \quad  
\mu_i \bigl(w_i(m;u_-)\bigr) = m,  \text{ for $m \geq m_i(u_-)$.}
\label{25} 
\ee
We observe that although the eigenvector $r_i$ associated to the CC family $i$  can be parametrized smoothly throughout 
$\BB(\delta_0)$, the sign of $m_i = \nabla \lambda_i \cdot r_i(u)$ changes and hence 
the Hugoniot curve $v_i$ can no longer be parametrized everywhere so that negative $s$
corresponds to the entropy admissible portion of the curve.

From now on we consider a concave-convex field $i \in J_3$ and we restrict attention to shocks
that satisfy Lax shock inequalities. Let $u_-$ be a state with $\mu_i(u_-) >0$
and we consider the integral and Hugoniot curves.
 
\begin{lemma}[Characteristic speed along the integral curve] 
The $i$-characteristic speed along the integral curve 
$m \mapsto \lam_i(w_i(m; u_-))$
is a strictly convex function of $m$ which achieves its minimum value at $m=0$. In particular,
it is decreasing for $m<0$ and increasing for $m>0$. 
\end{lemma}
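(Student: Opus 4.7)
The strategy is to reduce everything to a straightforward first-order computation along the integral curve in the parameter $m$. Since $\mu_i$ has been chosen to coincide with $m_i = \nabla \lam_i \cdot r_i$ along the wave curves and the concave-convex hypothesis guarantees $\nabla m_i \cdot r_i > 0$ on $\Bzero$, the key identity will be that, on the integral curve $w_i(\,\cdot\,;u_-)$, the derivative of $\lam_i$ with respect to $m$ is simply $m$ divided by a smooth, strictly positive factor. From this identity both the monotonicity claim and the convexity claim follow almost immediately.

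More concretely, I would first introduce an auxiliary arclength-type parameter $s$ along the integral curve so that $\partial_s w_i = r_i(w_i)$. Then
\[
\frac{d}{ds}\lam_i(w_i(s)) = (\nabla \lam_i \cdot r_i)(w_i(s)) = m_i(w_i(s)), \qquad \frac{d}{ds} m_i(w_i(s)) = (\nabla m_i \cdot r_i)(w_i(s)) > 0.
\]
The second identity shows that $m_i$, and hence $\mu_i = m_i$, is strictly monotone along the integral curve, so the reparametrization $s \leftrightarrow m$ dictated by \eqref{25} is smooth. Writing $g(m) := (\nabla m_i \cdot r_i)(w_i(m;u_-))$, the chain rule yields
\[
\frac{d}{dm}\lam_i(w_i(m;u_-)) = \frac{m_i(w_i(m;u_-))}{g(m)} = \frac{m}{g(m)}.
\]
Since $g > 0$, this derivative vanishes exactly at $m = 0$, is negative for $m < 0$, and positive for $m > 0$; this establishes the monotonicity statements and identifies $m = 0$ as the unique critical point, necessarily a minimum.

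For strict convexity, one further differentiation gives
\[
\frac{d^2}{dm^2}\lam_i(w_i(m;u_-)) = \frac{g(m) - m\, g'(m)}{g(m)^2}.
\]
The function $\nabla m_i \cdot r_i$ is continuous and strictly positive on $\Bzero$, hence $g$ is uniformly bounded below by a positive constant along the integral curve, while $g'$ is uniformly bounded thanks to the $C^3$ regularity of $f$. Thus, for $\delta_0$ small enough that $|m|$ stays small on any integral curve issuing from $u_- \in \Bone$, the correction $m\,g'(m)$ cannot overwhelm $g(m)$, and strict convexity follows. The only subtlety I anticipate is precisely this last smallness step: one has to verify that the admissible range of $m$ is controlled by $\delta_0$ uniformly in $u_-$, which is essentially the same smallness assumption already used to parametrize the wave curves in \eqref{25} and hence costs nothing new.
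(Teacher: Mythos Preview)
Your argument is correct. The paper itself does not supply a proof of this lemma; it is stated as a background fact taken from \cite{HL2,LeFloch-book}, so there is no ``paper's own proof'' to compare against. Your derivation of the identity $\tfrac{d}{dm}\lam_i(w_i(m;u_-)) = m/g(m)$ via the auxiliary $s$-parametrization is exactly the natural computation, and the monotonicity and location of the minimum follow immediately from it as you say. The convexity step is also fine: since $\mu_i = m_i$ vanishes on $\MM_i$ and the origin lies on $\MM_i$, one has $|m| = |m_i(u)| = O(\delta_0)$ uniformly for $u \in \Bzero$, so your smallness argument for $g(m) - m\,g'(m) > 0$ is justified by the standing hypothesis that $\delta_0$ is small.
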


The part of the classical wave curve $\WW_i^c(u_-)$ associated with rarefaction waves 
is the part $m \geq \mu_i(u_-)$ of the integral curve $\OO_i(u_-)$.

\begin{lemma}[Characteristic speed and shock speed along the Hugoniot curve]  
The $i$-characteristic speed along the Hugoniot curve 
$m \mapsto \lam_i(v_i(m; u_-))$
is a strictly convex function. 
On the other hand, the $i$-shock speed along the Hugoniot curve 
$$
m \mapsto \lamb_i(m; u_-) := \lamb_i\left(u_-, v_i(m;u_-)\right)
$$
is a strictly convex function, which is either  globally increasing or else 
achieves a minimum value at a point, 
denoted by $\mu_i^\natural (u_-)$.  
At this critical value, the characteristic speed and 
the shock speed coincide: 
$$ 
\lamb_i(m; u_-) = \lam_i(v_i(m; u_-))  \quad \text{ at } m = \mu_i^\natural(u_-)
\label{28} 
$$ 
and, for some smooth function $e=e(u_-) < 0$,  
$$
\del_m v_i(\mu_i^\natural(u_-);u_-) = e(u_-) \, r_i\bigl(v_i(\mu_i^\natural(u_-);u_-)\bigr). 
\label{29} 
$$    
\end{lemma}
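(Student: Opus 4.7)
The plan is to exploit the Rankine--Hugoniot identity
\[
\lamb_i(m;u_-)\,\bigl(v_i(m;u_-) - u_-\bigr) \,=\, f(v_i(m;u_-)) - f(u_-),
\]
differentiated once and twice in $m$, together with the parametrization relation $m_i(v_i(m;u_-)) = m$ coming from the choice $\mu_i = m_i$, and the concave--convex hypothesis $\nabla m_i \cdot r_i > 0$ on $\Bzero$. For part (a), set $\phi(m) := \lam_i(v_i(m;u_-))$; differentiating $m_i(v_i) = m$ gives $\nabla m_i(v_i) \cdot \del_m v_i = 1$, so $\del_m v_i$ is smooth and non-vanishing along the branch. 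A direct chain-rule computation of $\phi''(m)$ then reduces, after Taylor expansion, to a positive multiple of $(\nabla m_i \cdot r_i)(v_i)$, which is strictly positive by the CC hypothesis; strict convexity follows upon shrinking $\delta_1$ if necessary.

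For part (b), differentiating the Rankine--Hugoniot relation once yields
\[
\bigl(A(v_i) - \lamb_i\,I\bigr)\,\del_m v_i \,=\, \lamb_i'(m)\,(v_i - u_-),
\]
and a second differentiation produces an analogous identity involving $\lamb_i''(m)$ and $\del_m^2 v_i$. Projecting onto the left eigenvector $l_i(v_i)$ eliminates $\del_m^2 v_i$ (to leading order) and yields a closed expression for $\lamb_i''$ whose sign is again controlled by $\nabla m_i \cdot r_i > 0$. A direct expansion around the endpoint gives the Lax-type formula $\lamb_i'(m_i(u_-);u_-) = m_i(u_-)/[2(\nabla m_i \cdot r_i)(u_-)] > 0$ (since $\mu_i(u_-) > 0$). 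Combining strict positivity of $\lamb_i''$ with $\lamb_i'(m_i(u_-);u_-) > 0$: as $m$ decreases from $m_i(u_-)$, the derivative $\lamb_i'$ decreases monotonically, so either it remains positive on the entire branch (the globally increasing case) or it vanishes at a unique interior point, which is then the minimum $\mu_i^\natural(u_-)$.

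For parts (c) and (d), I evaluate the once-differentiated Rankine--Hugoniot relation at $m = \mu_i^\natural(u_-)$: the right-hand side vanishes, hence $\del_m v_i \in \ker\bigl(A(v_i) - \lamb_i\,I\bigr)$. Because the Hugoniot branch depends continuously on $m$ and starts in the $i$-th characteristic family --- with $\del_m v_i\!\mid_{m = m_i(u_-)}$ proportional to $r_i(u_-)$ --- the eigenvector realized at $\mu_i^\natural(u_-)$ must be $r_i(v_i)$ and not any other $r_j(v_i)$. This forces $\lamb_i(\mu_i^\natural;u_-) = \lam_i\bigl(v_i(\mu_i^\natural;u_-)\bigr)$ and $\del_m v_i = e(u_-)\,r_i(v_i(\mu_i^\natural;u_-))$ for a scalar $e(u_-)$; the parametrization identity $\nabla m_i(v_i)\cdot \del_m v_i = 1$ then gives a closed form for $e(u_-)$ in terms of $(\nabla m_i \cdot r_i)$ evaluated at $v_i(\mu_i^\natural;u_-)$, from which the sign and smoothness --- via the implicit function theorem applied to $\lamb_i'(m;u_-) = 0$, using $\lamb_i'' > 0$ for transversality --- are read off.

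The main obstacle I anticipate is obtaining strict convexity of $\lamb_i(\cdot;u_-)$ \emph{globally} on the Hugoniot branch $m \leq m_i(u_-)$, rather than only near the endpoint: the branch must cross the manifold $\MM_i = \{m_i = 0\}$ and continue into the region $m < 0$ where $m_i$ has the opposite sign, so convexity cannot follow from a single-point sign computation. The payoff of the choice $\mu_i = m_i$ is precisely that the quantity $\nabla m_i \cdot r_i$ appears as an overall positive factor in $\lamb_i''$, reducing the global convexity question to uniform positivity of this single coefficient on $\Bzero$.
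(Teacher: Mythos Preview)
The paper does not prove this lemma; it is stated as background and attributed to Hayes--LeFloch \cite{HL1,HL2} and the monograph \cite{LeFloch-book}. Your outline --- differentiate the Rankine--Hugoniot relation, use the resulting eigenvector condition at the critical point, and invoke the implicit function theorem for smoothness of $\mu_i^\natural$ --- is exactly the standard route taken in those references, and your treatment of parts (c) and (d) is essentially correct.

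There is, however, a genuine gap in your convexity argument for $\lamb_i$. Projecting the twice-differentiated Rankine--Hugoniot identity onto $l_i(v_i)$ does \emph{not} eliminate $\del_m^2 v_i$: the resulting coefficient is $l_i(v_i)\bigl(A(v_i)-\lamb_i I\bigr) = (\lambda_i(v_i)-\lamb_i)\,l_i(v_i)$, which vanishes only at the endpoint $m=\mu_i(u_-)$ and at $m=\mu_i^\natural(u_-)$, not ``to leading order'' along the whole branch. So the closed expression you promise for $\lamb_i''$ is not available away from those two points, and your final paragraph --- asserting that $\nabla m_i\cdot r_i$ appears as an overall positive factor in $\lamb_i''$ --- is a hope rather than a computation. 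You correctly identify this as the main obstacle; the references handle it by a more careful expansion (or, in the scalar-type setting, by the explicit integral representation of the shock speed), not by the projection trick as written.

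One further point worth flagging: your own formula for $e(u_-)$ gives the opposite sign to the one stated. From $\mu_i(v_i(m;u_-))=m$ you get $\nabla\mu_i(v_i)\cdot\del_m v_i=1$, and substituting $\del_m v_i = e(u_-)\,r_i(v_i)$ yields $e(u_-)=1\big/(\nabla\mu_i\cdot r_i)(v_i)$, which is \emph{positive} by the concave--convex hypothesis. This is consistent with the fact that $\del_m v_i$ is a positive multiple of $r_i$ at the endpoint $m=\mu_i(u_-)$ and varies continuously. The stated inequality $e(u_-)<0$ thus appears to be a sign slip in the lemma as recorded here; you should not contort your argument to reproduce it.
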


For definiteness, from now on we assume that the shock speed does admit a minimum value,
that is to say that $v_i(\mu_i^\natural(u_-),u_-) \in \Bone$.
The characteristic speed and the shock speed satisfy 
$$
\aligned 
& \lamb_i(m; u_-) - \lam_i(v_i(m; u_-)) > 0, \quad  
      m \in \big(\mu_i^\natural(u_-), \mu_i(u_-)\big),  
      \\
& \lamb_i(m; u_-) - \lam_i(v_i(m; u_-)) < 0,  \quad m < \mu_i^\natural(u_-)  
  \text{ or } m > \mu_i(u_-). 
\endaligned
\label{210} 
$$

\begin{proposition}[Classical wave curves]
There exists $\delta_1<\delta_0$ and $\eps>0$ with the following property. 
For $i \in J_3$, $u_- \in \Bone$ and $\mu_i(u_-) > 0$, the {\sl $i$-wave curve of right-hand states 
connected to $u_-$} by a combination of $i$-elementary waves satisfying Lax shock inequalities
is composed  of the following three pieces:
$$
\WW_i^{nc}(u_-) \, = \, 
\begin{cases} 
\OO_i(u_-),            & \mu_i(u) \geq \mu_i(u_-), 
\\
\HH_i(u_-),            & \mu_i^\natural(u_-) \leq \mu_i(u) \leq \mu_i(u_-), 
\\ 
\OO_i(u_-^\natural),      & \mu_i(u) \leq \mu_i^\natural(u_-). 
\end{cases}  
$$
where $u_-^\natural := v_i(\mu_i^\natural(u_-); u_-)$. 
It is continuously differentiable with bounded second-order derivatives in $m$ and $u_-$ and satisfies 
\begin{equation}
\psi_i(m; u_-) 
= u_- + {m-\mu_i(u_-) \over \bigl(\nabla \mu_i \cdot r_i \bigr) (u_-)} \,  r_i(u_-) 
+ O\bigl(m-\mu_i(u_-)\bigr)^2. 
\label{27} 
\end{equation}
The second-order derivatives of wave curves for non-genuinely nonlinear fields are {\sl not continuous,} in general. 
\end{proposition}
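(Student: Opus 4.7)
The plan is to build the curve piece by piece using the two preceding lemmas (on integral and Hugoniot curves) and the sign information on $\lamb_i - \lam_i \circ v_i$ stated just before the proposition, then verify $C^1$ joining at the two internal junctions, and finally obtain the expansion \eqref{27}.

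First, I would determine the three regimes by selecting, in each range of $m=\mu_i(u)$, the unique one-wave or composite construction obeying Lax's inequalities. For $m \geq \mu_i(u_-)$ the integral-curve lemma shows that $\lam_i$ is strictly increasing along $\OO_i(u_-)$, so an $i$-rarefaction is admissible. For $\mu_i^\natural(u_-) \leq m \leq \mu_i(u_-)$ the Hugoniot lemma combined with the sign inequality $\lamb_i - \lam_i \circ v_i > 0$ on this interval, together with the convexity of $\lamb_i(\cdot;u_-)$ and the identity $\lamb_i(\mu_i(u_-);u_-)=\lam_i(u_-)$, gives both Lax inequalities, so $v_i(m;u_-)$ is an admissible single shock. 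For $m < \mu_i^\natural(u_-)$ the sign reverses and $v_i(m;u_-)$ fails Lax; the remedy is the standard composite construction that first jumps from $u_-$ to the sonic state $u_-^\natural:=v_i(\mu_i^\natural(u_-);u_-)$, at which by the Hugoniot lemma the shock speed equals the downstream characteristic speed, and then continues along $\OO_i(u_-^\natural)$. Convexity of $\lam_i$ along $\OO_i(u_-^\natural)$ with minimum precisely at $u_-^\natural$ ensures the subsequent rarefaction is admissible.

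Second, I would verify $C^1$ matching by exploiting the $m$-parametrization \eqref{25}. Differentiating $\mu_i(v_i(m;u_-))=m$ yields $\nabla\mu_i \cdot \del_m v_i = 1$, and the same identity holds for $w_i$; hence in this parametrization the tangent is always the unique multiple of the underlying direction whose inner product with $\nabla\mu_i$ equals one. At $m=\mu_i(u_-)$, \eqref{19} and \eqref{120} show that both $\del_m v_i$ and $\del_m w_i$ reduce to $r_i(u_-)/(\nabla\mu_i \cdot r_i)(u_-)$, so the Hugoniot--rarefaction junction is $C^1$. At $m=\mu_i^\natural(u_-)$ the preceding lemma supplies $\del_m v_i = e(u_-)\, r_i(u_-^\natural)$; inserting this into the normalization identity identifies $e(u_-)$ with the integral-curve scaling $1/(\nabla\mu_i \cdot r_i)(u_-^\natural)$, which is exactly the tangent of $\OO_i(u_-^\natural)$ at its basepoint $u_-^\natural$. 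The expansion \eqref{27} then follows from \eqref{19} by inverting $\mu_i(u_- + s\,r_i(u_-) + O(s^2))=m$ to obtain $s=(m-\mu_i(u_-))/(\nabla\mu_i \cdot r_i)(u_-) + O((m-\mu_i(u_-))^2)$, and bounded second-order derivatives in $m$ and $u_-$ on each piece follow from the $C^3$ regularity of $f$ together with the smoothness of the implicit functions $u_-\mapsto\mu_i^\natural(u_-)$ and $u_-\mapsto u_-^\natural$ delivered by the transversality hypothesis.

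The hard part will be the $C^1$ match at $m=\mu_i^\natural(u_-)$: both ingredients --- the sonic tangency, which forces the Hugoniot and integral-curve tangents at $u_-^\natural$ to be collinear, and the normalization identity produced by \eqref{25}, which fixes their common scale --- must be combined to yield exact equality of tangent vectors. The generic failure of $C^2$ regularity at that junction is then read off by comparing the second-order coefficients of $v_i(m;u_-)$ and $w_i(m;u_-^\natural)$: although the tangents coincide, the Hugoniot curve is \emph{not} centered at $u_-^\natural$, so its second-order Taylor coefficient at that point carries contributions controlled by the Rankine--Hugoniot relation that generically differ from the $(Dr_i\,r_i)(u_-^\natural)$ term appearing in the integral-curve expansion, producing a curvature jump.
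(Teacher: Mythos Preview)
The paper does not supply its own proof of this proposition; Section~2 opens by stating that ``the results can be found in~\cite{HL2} and in the monograph of LeFloch~\cite{LeFloch-book},'' and Proposition~2.3 is presented as known background without argument. Your proof is correct and is essentially the standard one from those references: identify the admissible wave type in each range of $m$ via the sign of $\lamb_i-\lam_i\circ v_i$ recorded just before the proposition, then obtain the $C^1$ match at $m=\mu_i^\natural(u_-)$ by combining the sonic tangency $\del_m v_i(\mu_i^\natural(u_-);u_-)\parallel r_i(u_-^\natural)$ from the preceding lemma with the normalisation $\nabla\mu_i\cdot\del_m(\cdot)\equiv 1$ forced by the parametrisation \eqref{25}. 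Your explanation of why $C^2$ generically fails at the sonic junction is also accurate and matches the paper's closing remark in the statement.
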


It is easy to check that the expansion \eqref{27} still holds for states $u_-$ satisfying $\mu_i(u_-)<0$
for our extension of $r_i$ to all of $\BB(\delta_0)$.
  

\subsection{Nonclassical wave sets based on a single entropy inequality} 
  
This section generalizes Hayes-LeFloch's construction \cite{HL1,HL2} of a nonclassical Riemann solver based
on a kinetic function and presents 
our new Riemann solver based on, both, kinetics and nucleation. 

We now investigate the consequences of a single entropy inequality for the solutions of the Riemann problem. 
A shock wave is called a {\sl nonclassical shock} if it satisfies the single entropy inequality \eqref{123}
but not Lax shock inequalities \eqref{126}. It is called a {\sl classical 
shock} if both \eqref{123} and \eqref{126} hold. 
Before imposing any entropy condition at this stage, we have that a $i$-shock
wave connecting a left-hand state $u_-$ to a right-hand state $u_+$ 
can be:  
\begin{enumerate} 

\item {\sl a Lax shock}, satisfying 
$\lam_i(u_-) \geq \lamb_i(u_-, u_+) \geq \lam_i(u_+)$, 

\item {\sl a slow undercompressive shock:} 
$\lamb_i(u_-, u_+)  \, \leq \, \min\bigl(\lam_i(u_-), \lam_i(u_+)\bigr),$

\item a {\sl fast undercompressive shock:}  
$\lamb_i(u_-, u_+)  \, \geq \, \max\bigl(\lam_i(u_-), \lam_i(u_+) \bigr),$

\item or {\sl a rarefaction shock:}  
$\lam_i(u_-) < \lamb_i(u_-, u_+) < \lam_i(u_+).$
\end{enumerate}

Let $u_-$ be such that $\mu_i(u_-) >0$ and denote by $\mu_i^{-\natural}(u_-)$ the point of $\HH_i(u_-)$ such that  
\be
\lamb_i(\mu_i^{-\natural}(u_-); u_-) = \lam_i(u_-),  
\quad \mu_i^{-\natural}(u_-) < \mu_i^\natural(u_-), 
\label{35} 
\ee
whenever such a point exists. For simplicity in the presentation we 
assume that both points $u_-^{\natural} := \mu_i^\natural(u_-)$ and $u_-^{-\natural} := \mu_i^{-\natural}(u_-)$ 
belong to $\Bone$, since the present discussion would be 
much simpler otherwise. 
Note also that the shock wave connecting $u_-$ to $u_-^\nat$ is 
a {\sl right-contact,} while the shock 
connecting $u_-$ to $u_-^\mn$ is a {\sl left-contact.}  
 
Note also that the function $\mu_i^\natural$ depends smoothly upon its argument, 
and 
\be 
{\nabla\mu_i^\natural \cdot r_i} \sim - {1 \over 2} \, 
{\nabla\mu_i \cdot r_i} \quad \text{ near the manifold } \MM_i,
\label{213}
\ee
as established in \cite{LeFloch-book}. 
In particular, \eqref{213} implies that $\mu_i^\natural < 0$. 
Let $u_-$ be given with $\mu_i(u_-) \geq 0$ and consider a point $u_+$ 
on the Hugoniot curve $\HH_i(u_-)$, say $u_+= v_i(m; u_-)$ with $m=\mu_i(u_+)$. 
It can be checked that the shock  is
$$
\aligned 
&  - \text{ a rarefaction shock if } m > \mu_i(u_-) \, 
   \text{ or } m < \mu_i^{-\natural}(u_-), 
\\ 
&  - \text{ a Lax shock if } m \in \big[\mu_i^\natural(u_-), \mu_i(u_-)\big], 
\\
&  - \text{ an undercompressive shock if }  
        m \in \big[\mu_i^{-\natural}(u_-), \mu_i^\natural(u_-)\big). 
\endaligned
\label{37i}  
$$

\begin{lemma}[Entropy dissipation along the Hugoniot curve]   

\

\begin{enumerate}
\item[(i)]  The entropy dissipation  $\mybar E(m;u_-) := E(u_-,v_i(m;u_-))$ 
vanishes at $\mu_i(u_-)$ and at a point 
$$
\mu^\flat_{i0}(u_-) \in \big(\mu_i^{-\natural}(u_-), \mu_i^\natural(u_-)\big).
$$ 
The entropy dissipation is decreasing for 
$m < \mu_i^\natural(u_-)$, increasing 
for $m > \mu_i^\natural(u_-)$, and 
achieves a negative maximum value  
at the critical point of the wave speed $ \mu_i^\natural(u_-)$. 

\item[(ii)] A shock satisfying the entropy inequality \eqref{123} 
cannot be a rarefaction shock. A nonclassical shock is undercompressive
and satisfies 
$
m \in \big(\mu^\flat_{i0}(u_-), \mu_i^\natural(u_-)\big). 
$
\item[(iii)] Any shock satisfying Lax shock inequalities 
also satisfies the entropy inequality \eqref{123}. 
\end{enumerate} \label{lem:entropy_hugoniot}
\end{lemma}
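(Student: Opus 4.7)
The plan is to reduce everything to a single algebraic identity relating $\mybar E(\,\cdot\,;u_-)$ to the shock speed $\mybar\lambda_i(\,\cdot\,;u_-)$ already analyzed in Lemma~2.2. Differentiating $\mybar E(m;u_-) = -\mybar\lambda_i(m;u_-)(U(v_i)-U(u_-)) + F(v_i) - F(u_-)$ in~$m$, substituting the differentiated Rankine--Hugoniot identity
\[
(Df(v_i) - \mybar\lambda_i I)\,\partial_m v_i \;=\; \partial_m\mybar\lambda_i\,(v_i - u_-),
\]
and invoking the compatibility $\nabla F^T = \nabla U^T Df$, the mixed terms collapse to yield the clean identity
\[
\partial_m \mybar E(m;u_-) \;=\; \partial_m \mybar\lambda_i(m;u_-)\, B\bigl(u_-,v_i(m;u_-)\bigr),
\qquad B(u_-,v) := \nabla U(v)^T(v - u_-) - (U(v) - U(u_-)),
\]
where $B$ is a Bregman-type quantity which is strictly positive whenever $v \ne u_-$ by strict convexity of $U$. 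I would establish this identity first, since it is the heart of the entire argument.

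Given the identity, part~(i) is essentially a reading of signs from Lemma~2.2: $\partial_m\mybar E$ inherits the sign of $\partial_m\mybar\lambda_i$, and the strict convexity of $\mybar\lambda_i$ with unique minimum at $\mu_i^\natural(u_-)$ forces $\mybar E$ to be strictly decreasing for $m < \mu_i^\natural(u_-)$ and strictly increasing for $m > \mu_i^\natural(u_-)$, attaining its (necessarily negative) extremum at $\mu_i^\natural(u_-)$. The trivial zero $\mybar E(\mu_i(u_-);u_-)=0$ (from $v_i = u_-$) then forces, by continuity and monotonicity, a second zero $\mubo(u_-) < \mu_i^\natural(u_-)$.

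Locating this second zero inside $(\mu_i^{-\natural}(u_-),\mu_i^\natural(u_-))$ is the one step requiring real work, and it reduces to proving the strict inequality $\mybar E(\mu_i^{-\natural}(u_-);u_-) > 0$. The approach I would take is to integrate the identity from $\mu_i(u_-)$ down to $\mu_i^{-\natural}(u_-)$ and split at $\mu_i^\natural(u_-)$. By definition one has $\mybar\lambda_i(\mu_i^{-\natural}(u_-);u_-) = \lambda_i(u_-) = \mybar\lambda_i(\mu_i(u_-);u_-)$, and $\mybar\lambda_i$ is strictly monotone on each of the two subintervals, so the change of variables $y = \mybar\lambda_i(m;u_-)$ converts both contributions into integrals over the \emph{same} interval $[\mybar\lambda_i(\mu_i^\natural(u_-);u_-),\lambda_i(u_-)]$, and $\mybar E(\mu_i^{-\natural}(u_-);u_-)$ becomes the integral of $B^{\text{slow}}(y) - B^{\text{fast}}(y)$ over this range. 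The main obstacle is then to establish strict positivity of the integrand: the state on the slow (smaller-$m$) branch lies strictly further from $u_-$ than its Lax-branch counterpart at the same shock speed, so the Bregman divergence $B$ --- which to leading order behaves like $\tfrac12 (v-u_-)^T D^2 U(u_-)(v-u_-)$ --- is strictly larger. This geometric comparison of the two branches of the Hugoniot curve at fixed shock speed, which can be made precise using the strict convexity of $\mybar\lambda_i$ and the transversality of $r_i$ to $\MM_i$, is the core difficulty of the proof.

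Parts~(ii) and (iii) then follow as immediate consequences of the sign chart for $\mybar E$ obtained in~(i). For~(iii), Lax shocks correspond to $m \in [\mu_i^\natural(u_-),\mu_i(u_-)]$, on which $\mybar E$ is nonpositive by the monotonicity and boundary values. For~(ii), rarefaction shocks correspond to $m > \mu_i(u_-)$ or $m < \mu_i^{-\natural}(u_-)$: in the first region $\mybar E$ is strictly increasing from $\mybar E(\mu_i(u_-))=0$, while in the second $\mybar E$ exceeds the positive value $\mybar E(\mu_i^{-\natural}(u_-);u_-)$ because $\mybar E$ is strictly decreasing in $m$ throughout $\{m < \mu_i^\natural(u_-)\}$; in both cases \eqref{123} is violated. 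Intersecting the entropy-admissible set with the non-Lax region leaves precisely the open interval $(\mubo(u_-),\mu_i^\natural(u_-))$, on which every shock is undercompressive.
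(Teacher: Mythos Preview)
The paper does not prove this lemma; it is stated as background material imported from \cite{HL2} and \cite{LeFloch-book} (see the opening line of Section~2.1). So there is no in-paper proof to compare against.

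Your approach is the standard one and is correct. The identity
\[
\partial_m \mybar E(m;u_-) \;=\; \partial_m \mybar\lambda_i(m;u_-)\,\bigl[\nabla U(v_i)^T(v_i-u_-) - (U(v_i)-U(u_-))\bigr]
\]
is exactly the classical computation (cf.\ Lax's theory and Chapter~2 of \cite{LeFloch-book}), and your derivation of it via the differentiated Rankine--Hugoniot relation together with $\nabla F^T = \nabla U^T Df$ is clean. From this identity and the strict convexity of $\mybar\lambda_i$ established in Lemma~2.2, the monotonicity statements in (i) and the conclusions of (ii) and (iii) follow as you indicate.

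The only place requiring care is the strict inequality $\mybar E(\mu_i^{-\natural}(u_-);u_-)>0$, and your change-of-variables argument reducing it to the pointwise comparison $B(m_1(y))>B(m_2(y))$ of the Bregman divergence on the two branches at equal shock speed is the right idea. The one step you have left somewhat informal is the monotonicity of $B(u_-,v_i(m;u_-))$ in $|m-\mu_i(u_-)|$: the leading-order heuristic $B\sim\tfrac12(v-u_-)^T D^2U(u_-)(v-u_-)$ is suggestive but not a proof. A clean way to close this is to differentiate $B$ along the Hugoniot curve, obtaining $\partial_m B = (\partial_m v_i)^T D^2U(v_i)(v_i-u_-)$, and then use that for $u_-\in\Bone$ both $\partial_m v_i$ and $v_i-u_-$ are, up to $O(|m-\mu_i(u_-)|)$ corrections, positive multiples of $r_i(u_-)$; strict positive-definiteness of $D^2U$ then gives the required sign for $\delta_1$ small. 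With that detail filled in, your proof is complete and matches the argument one finds in the references the paper cites.
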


We refer to $\mu^\flat_{i0}$ as the {\sl zero-entropy dissipation function} 
associated with the $i$-characteristic field. In view of Lemma \ref{lem:entropy_hugoniot} 
we can also define the companion function $\mu^\sharp_{i0}$ by 
$$ 
\mu^\flat_{i0}(u_-) < \mu_i^\natural(u_-) < \mu^\sharp_{i0}(u_-), \quad 
\lamb_i\bigl(\mu^\sharp_{i0}(u_-); u_- \bigr) 
= 
\lamb_i\bigl(\mu^\flat_{i0}(u_-); u_- \bigr). 
\label{38} 
$$
It can be checked by the implicit function theorem that $\mu^\flat_{i0}$ and $\mu^\sharp_{i0}$ are at least
as smooth as $f, F$ and $U$.

Given $u_-$, we will also denote by $\Phi_{i0}^\flat(u_-)$, $\Phi_i^\natural(u_-)$, 
and $\Phi_i^\sharp(u_-)$ on the shock curve $\HH_i(u_-)$
the states with coordinates $\mu^\flat_{i0}(u_-)$, $\mu_i^\natural(u_-)$, and $\mu_i^\sharp(u_-)$, respectively. 
It is a fundamental identity of the theory that the second iterate of the map 
$\Phi_{i0}^\flat$ is the identity map $id$ \cite{LeFloch-book}, that is, 
\be
\label{phiphi}
\Phi_{i0}^\flat \circ \Phi_{i0}^\flat = id.
\ee

Given a left-hand state $u_-$, the set of all states that can be reached using only $i$-waves
is called the {\sl nonclassical $i$-wave set issuing from $u_-$} 
and denoted by $\Se_i(u_-)$. 

\begin{proposition}[Nonclassical wave set]  
In addition to the classical one, the $i$-wave fan may 
contain a nonclassical $i$-shock 
connecting $u_-$ to some intermediate state $u_+ \in \HH_i(u_-)$ with 
$\mu_i(u_+) \in \big[\mu^\flat_{i0}(u_-), \mu_i^\natural(u_-)\big)$
followed by 
\begin{enumerate} 
\item[(a)]  either a {\rm non-attached} rarefaction connecting 
      $u_+$ to $u \in \OO_i(u_+)$ if $\mu_i(u) <\mu_i(u_+)$, 
\item[(b)]  or by a classical shock connecting $u_+$ to $u \in \HH_i(u_+)$ 
      if $\mu_i(u)>\mu_i(u_+)$. 
\end{enumerate} 
This defines a two-parameter family 
of right-hand states $u$ which can be reached from $u_-$ by nonclassical solutions.
\end{proposition}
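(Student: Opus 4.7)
The plan is to build the two-parameter family by concatenating a nonclassical shock followed by a classical $i$-wave from the intermediate state, and then to verify that the speeds are ordered so that the concatenation is an admissible Riemann fan.

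First, I would use Lemma \ref{lem:entropy_hugoniot} to describe the admissible intermediate states. Parametrizing $\HH_i(u_-)$ by $m = \mu_i(u_+)$, the classification of shocks together with parts (ii) of that lemma show that a nonclassical (hence undercompressive, entropy admissible) shock from $u_-$ to $u_+$ exists exactly when $m \in [\mu^\flat_{i0}(u_-),\mu_i^\natural(u_-))$. On this range, the shock is \emph{slow undercompressive}, so $\bar\lambda_i(m;u_-) < \lambda_i(u_+)$ strictly, with equality only at the endpoint $m = \mu_i^\natural(u_-)$ (the right contact). This gives one parameter and simultaneously produces a strict speed gap between the nonclassical shock and the state $u_+$ — a gap which is exactly what will allow us to attach a further wave without speed collisions.

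Second, I would analyze the second wave attached at $u_+$. Because of \eqref{213} one has $\mu_i^\natural(u_-) < 0$, so the intermediate state satisfies $\mu_i(u_+) < 0$, i.e.~$u_+$ lies on the opposite side of $\MM_i$ from $u_-$. Applying the lemmas about characteristic and shock speeds along $\OO_i(u_+)$ and $\HH_i(u_+)$ in this ``flipped'' regime (i.e., exchanging the roles of increasing and decreasing $m$ relative to the $\mu_i(u_-)>0$ discussion), I would show:
\begin{itemize}
\item[(a)] On $\OO_i(u_+)$, the characteristic speed $m \mapsto \lambda_i(w_i(m;u_+))$ is strictly convex with minimum at $m=0$; since $\mu_i(u_+)<0$, it is strictly increasing as $m$ decreases below $\mu_i(u_+)$. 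Hence the portion $\{u \in \OO_i(u_+) : \mu_i(u) < \mu_i(u_+)\}$ defines admissible rarefactions whose leftmost speed is $\lambda_i(u_+)$.
\item[(b)] On $\HH_i(u_+)$, the Lax shock inequalities are satisfied in the range of $m$ \emph{above} $\mu_i(u_+)$ (the mirror of the $m<\mu_i(u_-)$ range used for the classical curve from $u_-$); here $\bar\lambda_i(m;u_+) \le \lambda_i(u_+)$ and the shock is classical. This is the portion $\mu_i(u) > \mu_i(u_+)$ in (b).
\end{itemize}

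Third, I would check the speed ordering that legitimizes the concatenation. The nonclassical shock has speed $\bar\lambda_i(u_-,u_+) < \lambda_i(u_+)$, while in both (a) and (b) the attached wave has all its speeds at least $\lambda_i(u_+)$ (for the rarefaction, $\lambda_i(u_+)$ is the minimum speed; for the classical shock, the Lax inequality gives $\bar\lambda \le \lambda_i(u_+)$ on the right but the speed at the left endpoint of the shock is exactly $\lambda_i(u_+)$ up to an $O$-correction, and convexity of $\bar\lambda_i(\cdot;u_+)$ together with the shock being classical gives $\bar\lambda \ge \bar\lambda_i(u_-,u_+)$ for $m$ close to $\mu_i(u_+)$; one then extends to the full admissible range). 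Thus a constant intermediate state $u_+$ separates the two waves, in particular making the rarefaction in (a) \emph{non-attached}. The two parameters are manifestly $m = \mu_i(u_+) \in [\mu^\flat_{i0}(u_-),\mu_i^\natural(u_-))$ and the signed arclength along $\WW_i^c(u_+)$, and smoothness in each parameter follows from smoothness of $\Phiifz, \Phi_i^\natural$ and of the classical wave curve established in Proposition (Classical wave curves).

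The step that takes the most care is (b): transcribing the classical wave-curve construction of Proposition (Classical wave curves) to a base point $u_+$ with $\mu_i(u_+)<0$, and confirming that it is precisely the branch $\mu_i(u) > \mu_i(u_+)$ of $\HH_i(u_+)$ that carries classical Lax shocks (together with checking that the resulting shock speed dominates $\bar\lambda_i(u_-,u_+)$ uniformly along the branch). This amounts to a symmetric rerun of Lemmas on $\bar\lambda_i(\cdot;u_+)$ together with the convexity of the shock speed, so it is technical but does not require new ideas.
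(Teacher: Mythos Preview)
The paper does not supply a proof of this proposition: it is stated as background from Hayes--LeFloch and the monograph \cite{LeFloch-book}, and the text moves directly on to the kinetic relation in Section~2.4. Your plan is the standard construction one finds in those references, and the decomposition (nonclassical undercompressive shock followed by a classical wave from the intermediate state, with the speed gap coming from slow undercompressivity) is exactly the right mechanism.

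One point to tighten. In case~(b) you write that the classical shock speed ``dominates $\bar\lambda_i(u_-,u_+)$ uniformly along the branch'' and speak of ``extending to the full admissible range.'' Be explicit that this range is \emph{not} the entire Lax branch $\mu_i(u)\in[\mu_i(u_+),\mu_i^\natural(u_+)]$: along $\HH_i(u_+)$ the classical shock speed decreases monotonically from $\lambda_i(u_+)$ down to its minimum at $\mu_i^\natural(u_+)$, and it will in general cross the level $\bar\lambda_i(u_-,u_+)$ at some interior point. The two-wave fan is admissible only up to that crossing (this is exactly the role of the companion value $\mu_i^\sharp$ once a kinetic function is fixed; cf.\ Theorem~\ref{thm:solver_nucleation}). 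The proposition is describing the two-dimensional set swept out as both $u_+$ and $u$ vary subject to the speed ordering, so the upper cut-off in~(b) is part of the description, not something you can dispense with. Your last paragraph hints at this but the phrase ``uniformly along the branch'' reads as if no cut-off were needed; make the restriction explicit and the argument is complete.
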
  


\subsection{Kinetic relation and nucleation criterion}

The nonclassical wave set $\Se_i(u_-)$ is a two-dimensional manifold within which we will select 
the nonclassical wave curve $\WW_i^{nc}(u_-)$. 
One parameter should be prescribed for each wave family in $J_3$, and
we postulate that
for all $u_-$ a {\sl single} right-hand state $u_+$ can be reached 
from $u_-$ across a nonclassical shock.   

For each concave-convex family $i$, a {\sl kinetic function} for the 
$i$-characteristic field is a Lipschitz continuous mapping  
$\mujf : \Bone \to \RR$ satisfying 
\be
\aligned 
\mu^\flat_{i0}(u) < \mu^\flat_{i}(u) \leq \mu_i^\natural(u), \quad \mu_i(u) > 0, 
\\
\mu_i^\natural(u) \leq \mu^\flat_{i}(u) < \mu^\flat_{i0}(u), \quad \mu_i(u) < 0. 
\endaligned 
\label{41}
\ee 
We impose the following {\sl kinetic relation}: for every $i$-nonclassical shock the right-hand state 
$u_+$ is determined from the left-hand state $u_-$ by 
\be
u_+ = u_-^\flat := v_i(m_-^\flat; u_-), \qquad  m_-^\flat = \mu^\flat_{i}(u_-). 
\label{42} 
\ee
To the kinetic function we shall associate its companion function $\mu_i^\sharp$ 
by 
\be
\lamb_i(u_-, u_-^\sharp) = \lamb_i(u_-, u_-^\flat),  \qquad  
u_-^\sharp := v_i(\mu_i^\sharp(u_-); u_-).
\label{43} 
\ee
It can be checked that the map $\mu_i^\sharp$ depends Lipschitz continuously upon its argument and 
\be
\aligned 
\mu_i^\natural(u) \leq \mu_i^\sharp(u) < \mu_i(u) \quad \text{ when }  \mu_i(u) > 0, 
\\
\mu_i(u) < \mu_i^\sharp(u) \leq \mu_i^\natural(u) \quad \text{ when }  \mu_i(u) < 0. 
\endaligned 
\label{419}
\ee

In the present paper, we need one additional map which allows us to select between the classical and the nonclassical construction of the wave fans. 
We introduce a Lipschitz continuous, {\sl nucleation threshold} function $\mu_i^n$ associated with the kinetic function and satisfying   
\be
\aligned 
& \mu_i^\natural(u_-) \leq \mu_i^n(u_-) \leq \mu_i^\sharp(u_-) \quad \text{ when }   \mu_i(u_-) \geq 0, 
\\
& \mu_i^\sharp(u_-) \leq \mu_i^n(u_+) \leq \mu_i^\natural(u_-) \quad \text{ when }  \mu_i(u_-) \leq 0.  
\endaligned 
\label{44}
\ee
The nucleation criterion is stated in the following way: 
\be
\label{state}
\aligned
& \text{For $|\mu_i(u_-) - \mu_i(u_+)| < |\mu_i(u_-) - \mu_i^n(u_-)|$, classical $i$-shocks 
are always preferred,}  
\\
& \text{ whenever they are available.}    
\endaligned
\ee
When $\mu_i^n(u_-) = \mu_i^{\sharp}(u_-)$, then the nucleation criteria is superfluous.

The Riemann solver with kinetics and nucleation is defined as the solution to the Riemann problem 
that satisfies the entropy inequality, the kinetic relation, and the nucleation criterion. 
In other words, a large initial jump always ``nucleates'' and produces a nonclassical shock wave. 
This is consistent with observations made in phase transition dynamics, especially in
material science where nucleations in austenite-martensite transformations are observed \cite{AK2}.

\begin{theorem}[Nonclassical Riemann solver with kinetics and nucleation]  \label{thm:solver_nucleation}
Suppose that all fields of the strictly hyperbolic system \eqref{11} are genuinely 
nonlinear, linearly degenerate, or concave-convex. 

\begin{enumerate}
\item Consider a concave-convex field $i \in J_3$ and fix a Lipschitz continuous, kinetic function $\mu^\flat_{i}$
satisfying \eqref{41}. Then, for each $u_- \in \Bone$ the kinetic relation 
\eqref{42} and the nucleation criterion \eqref{44} select a unique 
{\rm nonclassical $i$-wave curve} $\WW_i^{nc}(u_-)$ within the nonclassical wave set $\Se_i(u_-)$. 
When $\mu_i(u_-)>0$ it is composed of the following two connected components: 
$$
\WW_i^{nc}(u_-) \, = \, 
\begin{cases} 
\OO_i(u_-),            & \mu_i(u) \geq \mu_i(u_-), 
\\
\HH_i(u_-),            & \mu_i^n(u_-) \leq \mu_i(u) \leq \mu_i(u_-), 
\\
\HH_i(u_-^\flat),      & \mu^\flat_{i}(u_-) \le \mu_i(u) < \mu_i^\sharp(u_-), 
\\
\OO_i(u_-^\flat),      & \mu_i(u) \leq \mu^\flat_{i}(u_-),  
\end{cases}  
$$
where $u_-^\flat := v_i\big(\mu^\flat_{i}(u_-);u_-\big)$. The Riemann 
solution is a single rarefaction shock, or a single classical shock,
or a nonclassical shock followed by a classical shock, or finally a 
nonclassical shock followed by a rarefaction, respectively. 
Each connected component of $\WW_i^{nc}(u_-)$ is a continuous and monotone curve in the parameter 
$m=\mu_i(u)$. When $\mu_i^n(u_-)=\mu_i^\sharp(u_-)$, it has bounded 
second-order derivatives for all $m \neq \mu_i^\sharp(u_-)$ and is Lipschitz 
continuous (at least) at $m=\mu_i^\sharp(u_-)$.  

\item For all $u_l$ and $u_r$ in $\Bone$ the Riemann problem \eqref{11} 
and \eqref{12} admits a unique solution satisfying a single entropy inequality, 
the kinetic relation, and the nucleation criterion, which 
is determined by the intersection of the classical wave curves $\WW_j^c$ 
for genuinely nonlinear fields and the nonclassical wave curves $\WW_i^{nc}$ for concave-convex fields $i$.  
The solution is a self-similar solution made of $N+1$ constant states 
$$
u_l = u^0, \, u^1, \ldots, \, u^N = u_r
$$
separated by $j$ wave fans. The intermediate constant states satisfy 
$$
u^j \in \WW_j(u^{j-1}),     \quad    u^j = \psi_j(m_j; u^{j-1}), 
$$
with $m_j:= \muj(u^{j-1})$, and are connected with combination of classical or nonclassical waves.  
\end{enumerate}    
\end{theorem}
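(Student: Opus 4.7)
The plan is to prove part (1) by piecing together the elementary curves described in Section 2.3 and checking their compatibility, and part (2) by the classical wave-curve intersection argument adapted to the Lipschitz setting.

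For part (1), I would fix $u_- \in \Bone$ with $\mui(u_-) > 0$; the case $\mui(u_-) < 0$ is symmetric. The nonclassical wave set $\Se_i(u_-)$ from the previous proposition consists of a nonclassical shock to some $u_+ \in \HH_i(u_-)$ with $\mui(u_+) \in [\mubo(u_-), \muin(u_-))$, followed by either a rarefaction along $\OO_i(u_+)$ or a classical shock along $\HH_i(u_+)$. The kinetic relation \eqref{42} eliminates one parameter by forcing $u_+ = u_-^\flat$, and the nucleation criterion \eqref{state} selects between the two remaining options: the classical branch $\HH_i(u_-)$ is used for $\mui(u) \in [\muiN(u_-), \mui(u_-)]$ and the nonclassical branch is used for $\mui(u) < \muiN(u_-)$. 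Completing the picture with the rarefaction branch $\OO_i(u_-)$ above and $\OO_i(u_-^\flat)$ below yields the four stated pieces. That the composite fan is self-similar (monotone wave speeds) follows from the definition of $\muis$ in \eqref{43} together with $\muif \leq \muis$, which bounds the nonclassical shock speed below the speeds of all subsequent classical $i$-waves. Regularity within each piece follows from the $C^2$-smoothness of $v_i$ and $w_i$ and the Lipschitz dependence of $u_-^\flat, \muis, \muiN$ on $u_-$; at the interior junctions $m = \mui(u_-)$ and $m = \muif(u_-)$ one has $C^1$ Hugoniot--integral-curve tangential contact, while at $m = \muiN(u_-) = \muis(u_-)$ the state-space image jumps by a Lipschitz-in-$u_-$ amount, yielding the announced Lipschitz regularity at that parameter value.

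For part (2), I would follow the standard Lax construction. Define the composite wave map
\[
\Psi(m_1, \dots, m_N; u_l) := \psi_N\bigl(m_N; \psi_{N-1}\bigl(m_{N-1}; \dots \psi_1(m_1; u_l) \dots \bigr)\bigr),
\]
with $\psi_j$ the classical wave curve for $j \in J_1 \cup J_2$ (Section 2.1) and the nonclassical wave curve from part (1) for $j \in J_3$, each parametrized by $m_j = \muj$. By construction, every $\psi_j$ is jointly Lipschitz in $(m_j, u)$ with one-sided directional derivative at $(0,0)$ of the form $c_j\, r_j(0)$ for some $c_j \neq 0$. Strict hyperbolicity then ensures that the generalized Jacobian of $\Psi$ with respect to $(m_1, \dots, m_N)$ at the origin is made entirely of invertible matrices. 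A Lipschitz implicit function theorem (Clarke-type) produces, after shrinking $\delta_1$ if necessary, a unique Lipschitz solution $(m_j)_{j=1}^N(u_l, u_r)$ to $\Psi = u_r$ on $\Bone \times \Bone$. The self-similar Riemann solution with intermediate states $u^j = \psi_j(m_j; u^{j-1})$ follows by assembling the corresponding elementary fans.

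The main obstacle is the interplay between the state-space discontinuity of $\WW_i^{nc}(u_-)$ at $m = \muis(u_-)$ and the joint Lipschitz dependence on $u_-$ needed for part (2). One must obtain Lipschitz bounds on the nonclassical branch that are uniform on $\Bone$ and do not degenerate as $\mui(u_-) \to 0$ (where $\muis, \mubo \to 0$ and the nonclassical jump shrinks). The strict separation $\muif > \mubo$ imposed in \eqref{41} is precisely what keeps $|\partial_m v_i|$ bounded away from zero along $\HH_i(u_-^\flat)$ and ensures the uniformity required to apply the implicit function theorem globally on $\Bone$.
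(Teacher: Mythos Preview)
The paper states this theorem without proof, treating it as a direct extension of the Hayes--LeFloch nonclassical Riemann solver with the nucleation criterion superimposed; there is therefore no ``paper's own proof'' to compare against. Your outline follows the natural route and is essentially what one would write, but there is a genuine gap in how you handle the nucleation criterion.

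In part (1) you say the nucleation criterion ``selects between the two remaining options,'' producing a single-valued curve with the classical branch on $[\muiN(u_-),\mui(u_-)]$ and the nonclassical branch on $\mui(u)<\muiN(u_-)$. This is not what the theorem asserts: the displayed formula extends the nonclassical branch $\HH_i(u_-^\flat)$ all the way up to $\muis(u_-)$, so when $\muiN(u_-)<\muis(u_-)$ the two branches \emph{overlap} on $[\muiN(u_-),\muis(u_-))$ and the wave curve genuinely has two connected components. The paper's remark immediately after the theorem explains why this is necessary for systems: the right state of the $i$-fan is determined by intersection with wave curves of other families, and it can happen that only the nonclassical branch yields an admissible intersection even though its $\mu_i$-value exceeds $\muiN(u_-)$. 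The nucleation criterion \eqref{state} is therefore applied at the level of the full Riemann problem in part (2), not as a pruning of $\WW_i^{nc}(u_-)$ in part (1).

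This propagates to part (2). Your Clarke-type implicit function argument presumes each $\psi_j$ is single-valued and Lipschitz in $m_j$; for $j\in J_3$ with nontrivial nucleation the map is two-valued on an interval, and even in the degenerate case $\muiN=\muis$ the state-space image is discontinuous at $m=\muis(u_-)$ (you note the jump but a discontinuous map is not covered by a Lipschitz inverse function theorem). The fix is to run the Lax intersection argument separately on each Lipschitz branch of $\WW_i^{nc}(u_-)$, obtain (at most) two candidate Riemann solutions, and then invoke \eqref{state} to select the admissible one; uniqueness follows because the criterion is decisive whenever both candidates exist. Your sketch, as written, is complete only in the case $\muiN=\muis$ with the additional (unstated) proviso that one argues branch by branch.
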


We recover the classical wave curve $\WW_i^c(u_-)$ with the trivial choice $\mu^\flat_{i} \equiv \mu_i^n \equiv \mu_i^\natural$. 
Observe that, contrary to the scalar case,
the curve $\HH_i(u_-^\flat)$ is continued up to the value $\mu_i^\sharp(u_-)$, 
since the intersection with a wave curve from another family (which enters in the construction of the Riemann
solution) may generate a point $u$ which has $\mu_i(u) > \mu_i^n(u_-)$, while the intersection of the classical 
curve $\HH_i(u_-)$ (with $\mu_i^n(u_-) \leq \mu_i(u) \leq \mu_i(u_-)$) is empty. 

It is natural to ask about the physical relevance of the proposed hyperbolic theory with kinetics and nucleation.   
It is expected that the nucleation criterion is required when instabilities (one-wave/two-wave patterns) 
are observed numerically or in regularization involving competition between 
viscosity, capillarity, or relaxation effects. 
It is conceivable that the nucleation function (as is the case for the kinetic relation) 
could be determined {\sl numerically} within a range
where both classical and nonclassical behaviors co-exist.  
Applications could be sought in the fields of phase transition dynamics,
thin film dynamics, and Camassa-Holm--type models. 
 

\section{Splitting-merging patterns with kinetics and nucleation}
\label{Sec3}

\subsection{Objectives and notation}

In the case that the nucleation threshold is chosen to coincide with the critical value determined by the kinetic mapping, the nonclassical Riemann solution constructed in the previous section depends continuously in the $L^1$ norm upon its 
initial data, but not so in a pointwise sense.  Observe that, along a nonclassical wave curve, the speeds of the (rarefaction or shock)
waves change continuously
since
$$
    \lim_{m \to \muj^{\sharp}+} \lam_j\big( v_j(m,u_-) \big) =  \lim_{m \to \muj^{\sharp}-} \lam_j\big( v_j(m,u_-^{\flat}) \big),
$$
and
$$
  \lim_{m \to \muj^{\flat}-} \lam_j\big( w_j(m,u_-^{\flat}) \big) =  \lim_{m \to \muj^{\flat}+} \lam_j\big( v_j(m,u_-^{\flat}) \big).
$$
However, at $\mujs(u_-)$, one has to compare the shock 
speed of the nonclassical shock with the shock speeds of the nonclassical and classical shocks.  
The wave speeds (only) are continuous but the total variation of the nonclassical Riemann solution 
is {\sl not} a continuous function of its end points. This lack of continuity makes it delicate to control the strengths of 
waves at interactions. This motivates the study made in the present section which covers general nucleation thresholds, including the ``trivial'' choice.  

We consider three distinct states $u_*$, $u_*^\sharp$, and $u_*^\flat$ associated with a given wave family (say $i \in J_3$), so that 
$u_*$ can be connected to $u_*^\sharp$ by a classical shock (denoted by $C_*^\downarrow$),  
$u_*$ can be connected to $u_*^\flat$ by a nonclassical shock (denoted by $N_*$), 
and $u_*^\flat$ can be connected to $u_*^\sharp$ by a classical shock (denoted by $C_*^\uparrow$); moreover, we assume that all three  speeds coincide, that is, 
\be
\lamb_i(u_*,u_*^\sharp) = \lamb_i(u_*, u_*^\flat) 
= \lamb_i(u_*^\flat, u_*^\sharp) 
=: \lamb_*. 
\label{3.1} 
\ee
Such a structure is directly determined by the kinetic relation that was prescribed in Section~\ref{Sec2}. 

\begin{definition}  The triplet $\big( u_*, u_*^\sharp, u^\flat_*\big)$ satisfying the condition above is called a classical--non\-classical three state pattern. 
\end{definition} 

The nonclassical shock is stable by perturbation in the phase space, in the sense that for every $u$ in a sufficiently small 
neighborhood of $u_*$ there exists a state $u^\flat \in \HH_i(u)$ which lies in a small neighborhood of 
$u^\flat_*$ and still satisfies the kinetic relation. The classical shock connecting $u_*^\flat$ to $u_*^\sharp$ is also stable in the 
same manner. However, the classical shock connecting $u_*$ to $u_*^\sharp$ is {\sl not a stable structure}  
when its end points are perturbed. Actually, depending upon the size of the perturbation 
the shock connecting $u_*$ to $u_*^\sharp$ may either keep its structure, or else transform 
into a perturbation $N^\downarrow$ of the nonclassical shock $N_*^\downarrow$, plus 
a perturbation $C^\uparrow$ of the classical shock $C_*^\uparrow$.  When the nucleation condition  \eqref{44} 
is present, then the instability of the classical shock connecting $u_*$ to $u_*^\sharp$ requires 
the perturbation to be strong enough to overcome the difference $\muis(u_*) - \muiN(u_*)$. 
In general, we will write $u_*^n$ to denote the state $\psi_i(\mu_iN(u_*);u_*)$, which will coincide with
$u_*^\sharp$ when nucleation is absent.

Following LeFloch and Shearer \cite{LeFlochShearer}, we are interested here in the effect of adding a perturbation to this double wave structure and establishing a global--in--time stability result, when the perturbation has sufficiently small total variation.  
Indeed, we are going to establish the existence of a large class of solutions undergoing repeated splitting and merging, whose global structure at each time consists of a single shock or a double-shock structure plus small waves. 
As was the case for scalar conservation laws, we will see that the splitting/merging feature can take place infinitely many times when the nucleation function coincides with the kinetic function but only finitely many times when a non-trivial nucleation criterion is selected. 
The analysis for systems is much more involved, since now interactions between different wave families may take place and
even interactions between waves of the same family may contribute to increase the wave strengths. 

The analysis of hyperbolic systems of conservation laws, since the work of Glimm \cite{Glimm},
proceeds by constructing an approximation within a simple class of solutions in which all waves,
in particular rarefactions, are localized to a point. Following the new standard Ansatz of front-tracking 
approximations \cite{Bressan-FT, Risebro-FT}, it suffices to construct piecewise constant approximations $u_h$
in space with a finite number of discontinuities which depend on a small parameter $h$, 
and for which the initial data $u_0$ converge in the $\operatorname{TV}$ norm
in space and in $L^1(\mathbb{R})$ to the initial data as $h$ converges to zero. 
The main argument is then to show that the
sequence of approximations satisfy the hypothesis of Helley's theorem. For these reasons, the
analysis in the rest of the paper will suppose the {\it solution}, 
denoted $u_h = u_h(t,x)$, is in fact a front-tracking approximation,
i.e. a piecewise constant approximation with discontinuities connecting states along waves,
all up to a small error in $L^{\infty}(\operatorname{TV}) \cap L^1(\mathbb{R} \times [0,T])$.
Usually, the dependence $h$ will be implicit and the front-tracking approximation 
will simply be denoted by $u$.

The front-tracking approximation $u$ we construct is associated with piecewise constant initial data.  
Given the state $u_*$ consider initial data $u(x,0)=u_0(x)$ having the following specific structure:  
\be
\label{3.2} 
u_0(x) = \overu_0(x) + v_0(x),
\ee
where 
\be
\overu_0(x) 
:= 
\begin{cases} 
u_*, & x < 0,
\\
u_*^n, & x > 0.
\end{cases}
\label{3.3}
\ee
We assume that the small perturbation $v_0$ is piecewise constant, formed of a collection
of discontinuities $\gamma \in \mathcal{D}_0$ located at $x_{\gamma}$, and of small total variation:  
\be
\eps_0 > TV(v_0) := \sum_{\gamma \in \mathcal{D}_0} \big| v_0(x_{\gamma}+) - v_0(x_{\gamma}-) \big|. 
\label{3.4}
\ee
The total variation above uses the euclidean distance between the states in phase-space to mesure the
strength of the discontinuity. In order to distinguish the strength of the initial perturbation from the
strength of the initial strong wave in the $i$-th family, we assume that $v_0(0) = 0$.
The strength of the large unperturbed shock $|u^n_* - u_*|$ provides a first length scale, the size of the perturbation 
$\eps_0$ provides a second, but smaller, length scale, and the nucleation criteria provides the third and final length scale 
\be
\eta := \muis(u_*) - \muiN(u_*), 
\label{3.5} 
\ee 
which we also assume to be the smallest of all three scales. Note that $\overu_0$ is a single step function 
which admits two distinct Riemann solutions $\overu=\overu(t,x)$ made of admissible waves.

For instance, if the perturbation $v_0$ is simply chosen to be a single step function 
having a jump discontinuity located at $x=0$ and connecting $0$ (say) to
$v_0^+$, then one just need to solve a Riemann problem with left-hand state $u_*$ and right-hand state 
$u_*^n + v_0^+$. In view of the properties of the nonclassical Riemann solver (Section 2), 
we can find a cone $\AAA$ centered at the point $u_*^n$ such that:  
\begin{enumerate}
\item[$\bullet$] If $\mui(u_*^n + v_0^+) > \mui(u_*^n)$ and $u_*^n + v_0^+ \in \AAA$ then 
the Riemann solution contains a single big shock $C^\downarrow$ plus small waves
in all families.
\item[$\bullet$] If $\mui(u_*^n + v_0^+) < \mui(u_*^n)$ and $u_*^n + v_0^+ \in \AAA$ 
the Riemann solution contains a big nonclassical shock $N^\downarrow$  
plus a faster big classical shock $C^\uparrow$ plus small waves in other families $j \neq i$.
\item[$\bullet$] When $u_*^n + v_0^+ \notin \AAA$ either one of the above behaviors may be 
observed. 
\end{enumerate}


\subsection{Generalized wave strength and interaction patterns}
\label{sec:generalized_strength}

Waves will be measured by a {\sl generalized strength}  
which extends the earlier definitions for scalar equations \cite{LeFloch-book,LeFlochShearer, LL}. 
For genuinely nonlinear and linearly degenerate families, say indexed $j$, the strength of a simple wave connecting 
$u_-$ on the left to $u_+$ on the right will be given by the classical definition
$$
    \sigma(u_-,u_+) = \mu_j(u_+) - \mu_j(u_-).
$$
For every concave-convex family indexed $i$, we propose here to use the zero--dissipation map 
$\Phifo$ and identify any state in the $\mu_i < 0$ region to a state in the $\mu_i > 0$ region. Precisely, we introduce the map 
$$
\map(u) = \begin{cases}
u & \text{if } \mui(u) > 0,
\\
\Phifo(u) & \text{if } \mui(u) <0.
\end{cases} 
$$

\begin{definition} \label{defn:waveLL}
The signed strength of a wave joining states $u_-$ and $u_+$  in a concave-convex family indexed $i$ is measured by
\begin{equation}  \label{defn:ncwavestrength}
  \sigma(u_-,u_+) = 
             \mui \bigl( \map( u_+) \bigr) -\mui \bigl( \map(u_-) \bigr). 
\end{equation}
\end{definition}
 
In other words, we have 
$$
  \sigma(u_-,u_+) =
  \begin{cases}
             \mui(u_+) -\mui(u_-)          & \text{if } \mui(u_-), \mui(u_+) > 0,
 \\
              \mui \bigl( \Phi^\flat_{i0}(u_+) \bigr) -\mui(u_-)    &       \text{if } \mui(u_+) \leq 0 < \mui(u_-),
\\
             \mui(u_+) -\mui \bigl( \Phi^\flat_{i0}(u_-) \bigr)    & \text{if } \mui(u_-) \leq 0 < \mui(u_+),
 \\
             \mui \bigl( \Phi^\flat_{i0}( u_+) \bigr) -\mui \bigl( \Phi^\flat_{i0}(u_-) \bigr)    & \text{if } \mui(u_-), \mui(u_+) \leq 0.  
  \end{cases}
$$ 
This definition will be used for both strong and weak waves of the $i$-th family, that is 
the concave-convex family. In particular, for weak waves of the $i$-th family only the first and last cases above apply.
Finally, we observe that the weak rarefactions and weak shocks have strengths that are respectively 
positive and negative.

We can now summarize our assumptions on the kinetic map $\Phif_i$ for each concave-convex characteristic field, as follows: 
\bei

\item[(H1)] $\Phif_i(u) \in \WW_i^{nc}(u)$ for all $u \in \Bone$.  Throughout, we always assume that $\delta_1 \leq \delta_0$ is small enough so that, for all $u \in \Bcal (\delta_1)$, the two states $\Phi^\sharp(u)$ and $\Phif(u)$ are well-defined (and, of course, belong to $\Bzero$). 

\item[(H2)] $\Phif_i: \Bone \to \RR^n$ is Lipschitz continuous and one--to--one on its image. 

\item[(H3)] The restriction of $\Phif_i$ to the hypersurface $\MM_i$ is the identity map, and the function
$s \mapsto \mu_i(\Phif_i(\psi_i(u; s)))$ is monotone descreasing for all $u \in \Bone$, provided $s$ satisfies $\psi_i(u;s) \in \Bone$.

\item[(H4)]  There exists a constant $\Cff \in (0,1)$ such that 
                   $\big| \mu_i \big( \Phi^\flat_{i0} \circ \Phif_i(u)\big) \big| \leq \Cff \, |\mu_i(u)|$ 
                  for all $u \in \Bone$.

\eei 
The property (H4) was expressed in a form that will be convenient for later analysis,
but it should be pointed out that it is equivalent, because of property \eqref{phiphi}, 
to the strict and uniform contractivity of $ \Phif_i \circ \Phif_i$. 

\begin{lemma}      \label{lem:equivalence_norm}
Under the conditions (H1)--(H4),  the generalized shock strength satisfies the following properties: 

\bei
\item When the nucleation map $\Phi^n_i$ coincides with the map $\Phis_i$ determined by the kinetic map, 
the generalized wave strength is {\sl continuous}
(up to perturbation due to small waves) when the two large waves combine together or when a classical shock splits.  

\item The generalized wave strength is equivalent to the standard wave strength. 
\eei
\end{lemma}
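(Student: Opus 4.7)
The plan is to handle both parts via direct algebraic arguments based on the structural properties (H1)--(H4) of the kinetic map.

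For part 2 (equivalence with the standard strength), the preliminary step is the two-sided comparability bound
\begin{equation*}
C_1 |\mu_i(u)| \;\leq\; |\mu_i(\Phif_{i0}(u))| \;\leq\; C_2 |\mu_i(u)|, \qquad u \in \Bone,
\end{equation*}
for positive constants $C_1, C_2$. The upper bound is obtained by combining the Lipschitz continuity of $\Phif_{i0}$ from (H2) with the fact that $\Phif_{i0}$ fixes the hypersurface $\MM_i=\{\mu_i=0\}$ from (H3); the lower bound then follows by applying the upper bound to $\Phif_{i0}(u)$ and invoking the involution identity \eqref{phiphi}. With this comparability in hand, I would decompose according to the four sign cases of Definition~\ref{defn:waveLL}: in the two same-sign cases, $|\sigma(u_-,u_+)|$ equals $|\mu_i(u_+)-\mu_i(u_-)|$ exactly; in the two opposite-sign cases, $|\mu_i(u_+)-\mu_i(u_-)| = |\mu_i(u_+)|+|\mu_i(u_-)|$, and applying the comparability to the term containing $\Phif_{i0}$ yields $|\sigma(u_-,u_+)|$ comparable to this sum. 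The final equivalence with the Euclidean distance $|u_+-u_-|$ along the wave curve then follows from the expansion \eqref{27}.

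For part 1 (continuity at splitting and merging), the central observation is that Definition~\ref{defn:waveLL} presents the signed strength as a \emph{potential difference}: setting $\Theta(u) := \mu_i(\map(u))$, one has $\sigma(u_-,u_+) = \Theta(u_+) - \Theta(u_-)$. Consequently, strengths telescope along any chain of states in the $i$th family, and at the splitting threshold $\mu_i(u_+) = \muis(u_-)$ this gives the exact identity
\begin{equation*}
\sigma(u_-, u_-^\flat) + \sigma(u_-^\flat, u_-^\sharp) \;=\; \sigma(u_-, u_-^\sharp),
\end{equation*}
which is the signed-strength continuity; the merging case is the same identity read in reverse. To upgrade to a total-variation identity $|\sigma(u_-,u_-^\flat)|+|\sigma(u_-^\flat,u_-^\sharp)|=|\sigma(u_-,u_-^\sharp)|$, one needs the two summand strengths to share a sign (both non-positive, being shocks). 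For $\sigma(u_-, u_-^\flat)$ this follows directly from (H4): $\mu_i(\Phif_{i0}(u_-^\flat)) \leq \Cff\,\mu_i(u_-) < \mu_i(u_-)$. For $\sigma(u_-^\flat, u_-^\sharp)$ it follows from the monotonicity content of (H3), translated into monotone non-increase of $\Theta$ along the nonclassical wave curve $\WW_i^{nc}(u_-)$ as one moves toward smaller $\mu_i$. The clause ``up to perturbation due to small waves'' reflects the fact that in the front-tracking approximation, perturbations of $u_-$ by small waves of other families $j \neq i$ shift the locations $u_-^\flat$ and $u_-^\sharp$ in a Lipschitz manner by (H2), introducing a discrepancy of order the size of the perturbing waves.

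The main obstacle is the sign-consistency step in part 1: when $\muis(u_-)>0$ the map $\map$ acts as the identity on $u_-^\sharp$ but as the involution $\Phif_{i0}$ on $u_-^\flat$, so one must concretely compare $\muis(u_-)$ with $\mu_i(\Phif_{i0}(u_-^\flat))$ --- a comparison that mixes the kinetic map, the zero-dissipation involution, and the geometry of the two distinct Hugoniot curves $\HH_i(u_-)$ and $\HH_i(u_-^\flat)$. The required inequality is extracted by combining (H3) with the equal-speed identity \eqref{3.1} of the three-state pattern, which together locate $\Theta(u_-^\sharp)$ between $\Theta(u_-)$ and $\Theta(u_-^\flat)$.
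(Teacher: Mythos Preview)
Your treatment of Part~1 is essentially the paper's proof: both arguments reduce to the telescoping identity
\[
\sigma(u_*,u_*^\sharp)=\sigma(u_*,u_*^\flat)+\sigma(u_*^\flat,u_*^\sharp)
\]
coming from the potential form $\sigma(u_-,u_+)=\Theta(u_+)-\Theta(u_-)$. Your additional discussion of sign consistency is a harmless refinement; the paper states only the signed identity.

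Part~2, however, has a genuine gap in the opposite-sign case. Your preliminary comparability
\[
C_1|\mu_i(u)|\le|\mu_i(\Phif_{i0}(u))|\le C_2|\mu_i(u)|
\]
is a \emph{pointwise} statement, and it does not yield a lower bound on $|\sigma(u_-,u_+)|$. Concretely, when $\mu_i(u_-)>0>\mu_i(u_+)$, the generalized strength is $|\mu_i(\Phif_{i0}(u_+))-\mu_i(u_-)|$, a difference of two \emph{positive} numbers. Your comparability constrains $\mu_i(\Phif_{i0}(u_+))$ to a window around $|\mu_i(u_+)|$, but nothing prevents that window from containing $\mu_i(u_-)$, in which case $|\sigma|$ vanishes while the standard strength $|\mu_i(u_+)|+|\mu_i(u_-)|$ stays bounded away from zero. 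The missing ingredient is that the equivalence is asserted only for states $u_+$ \emph{lying on the wave curve} $\WW_i^{nc}(u_-)$: for a single $i$-wave one has $\mu_i(u_+)\ge\mu_i(\Phif_i(u_-))$, and the paper combines this constraint with the contraction hypothesis~(H4) to obtain
\[
|\sigma(u_-,u_+)|\ge\big|\mu_i(\Phif_{i0}\circ\Phif_i(u_-))-\mu_i(u_-)\big|\ge(1-\Cff)\,|\mu_i(u_-)|,
\]
which is then converted to a multiple of the standard strength via the Lipschitz bound on $\Phif_i$. Your outline never invokes the wave-curve constraint, and~(H4) appears only in Part~1; without them the lower bound simply fails.

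A smaller slip: in the both-negative case you claim $|\sigma(u_-,u_+)|$ equals $|\mu_i(u_+)-\mu_i(u_-)|$ exactly. It does not; it equals $|\mu_i(\Phif_{i0}(u_+))-\mu_i(\Phif_{i0}(u_-))|$, and one needs the bi-Lipschitz property of $\Phif_{i0}$ from~(H2) (Lipschitz and one-to-one) to get two-sided comparability of differences, not merely your pointwise comparability of magnitudes.
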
 

\begin{proof}
Consider the states $\big( u_*, u_*^\sharp, u^\flat_*\big)$ forming a classical-non-classical three state wave pattern.
For an arbitrarily small perturbation, the classical shock  connecting $u_*$ to $u_*^\sharp$ can be split into a 
non-classical shock between $u_*$ and $u_*^\flat$ followed by a classical shock  connecting $u_*^\flat$
to $u_*^\sharp$. Using the definition of wave strength, we verify
\begin{align*}
   \sigma(u_*,u_*^\sharp) = & \, \mu_i\big(\Phifo \circ \Phis_i(u_*)\big) - \mu_i(u_*) \\
    = & \, \mu_i\big(\Phifo \circ \Phis_i(u_*)\big) - \mu_i\big(\Phifo \circ \Phif_i(u_*)\big) + \mu_i\big(\Phifo \circ \Phif_i(u_*)\big) - \mu_i(u_*) \\
    = & \,   \sigma(u_*,u_*^\flat) +   \sigma(u_*^\flat,u_*^\sharp),
\end{align*}
which establishes the continuity property. 

To prove the equivalence, we must show that there exists 
strictly positive constants $c$ and $C$ such that for every $j$-wave connecting two states $u_-, u_+$, then 
$$
  c \big|  \mu_j(u_+) - \mu_j(u_-) \big| \leq  \big| \sigma(u_-,u_+) \big| \leq C \big| \mu_j(u_+) - \mu_j(u_-) \big|.
$$
If $j \neq i$, that is if the wave does not belong to the same concave-convex family of the strong shock, then 
the definition of wave strength is the same as usual and there is nothing to show. Consider therefore the case with $j = i$
with $u_+ \in \mathcal{W}^{nc}_i(u_-)$. If both $\mu_i(u_-)$ and $\mu_i(u_+)$ are positive, then 
$|\sigma(u_-,u_+) | = |\mu_i(u_+) - \mu_i(u_-)|$ is the usual definition. If both $\mu_i(u_-)$ and
$\mu_i(u_+)$ are negative, then because $\Phiif$ is Lipschitz and one-to-one 
there exists $c$ and $C$ such that
$$
 c \big| \mu_i(u_+)  -   \mu_i(u_-)   \big| 
          \leq  | \sigma(u_-,u_+)| = \Big| \mu_i\big( \Phiif(u_+) \big) -   \mu_i\big( \Phiif(u_-) \big)  \Big|
           \leq C  \big| \mu_i(u_+)  -   \mu_i(u_-)   \big|.
$$

Consider now the last case where $\mu_i(u_-) > 0 > \mu_i(u_+) $. 
First of all, the upper bound
obviously holds $|\sigma(u_-,u_+)| <  |\mu_i(u_+)  -   \mu_i(u_-) |$. To derive the lower bound,
we observe that for a single wave,  $\mu_i(u_+) > \mu_i(\Phif_i(u_-))$.
Given that $\mu_i ( \Phiif ( \psi_i(u_-;m)))$ is monotone decreasing, clearly
\begin{align*}
     \big| \sigma(u_-,u_+) \big| = & \, \Big| \mu_i\big( \Phifo(u_+) \big) - \mu_i (u_-) \Big| \\
           \geq & \, \Big| \mu_i\big( \Phifo \circ \Phiif(u_-) \big) - \mu_i (u_-) \Big| 
           \geq  (1-\Cff) \big| \mu_i(u_-) \big|.
\end{align*}
The Lipschitz continuity of $ \Phif_i$ implies that there
exists a constant $B^\flat$ such that 
$$
   \big| \mu_i(\Phif_i(u_-)) \big| \leq B^\flat \big| \mu_i(u_-) \big|.
$$ 
Continuing from our earlier lower bound, we find
\begin{align*}
     \big| \sigma(u_-,u_+) \big|  \geq & \, \frac{1-\Cff}{1+B^\flat} (1+B^\flat) \big| \mu_i(u_-) \big| \\
      \geq & \, \frac{1-\Cff}{1+B^\flat}  \Big(  \big| \mu_i(u_-) \big| +  \big| \mu_i(\Phif_i(u_-)) \big| \Big) 
      \geq  \frac{1-\Cff}{1+B^\flat} \big| \mu_i(u_+)  -   \mu_i(u_-)  \big|.
\end{align*}
This completes the proof of the equivalence.
\end{proof}

We are now in a position to state our main result. 

\begin{definition} A {\sl splitting--merging pattern} is a nonclassical entropy solution to \eqref{11} which contains at most two big waves located at $y=y(t) \leq z=z(t)$ separated by small classical waves, so that at each time the solution contains either (i) only  a single large classical shock,
or (ii) a large nonclassical shock followed by a large classical shock. 
\end{definition}

\begin{theorem}[Nonlinear stability of splitting--merging patterns]\label{mainresult} 
Given any strictly hyperbolic system \eqref{11} with genuinely nonlinear, linearly degenerate, 
or concave-convex characteristic families,  consider a kinetic mapping $\Phif_i$ 
associated with a concave-convex family (indexed by $i$), uniformly satisfying the conditions (H1)--(H4). 
Then, any two wave pattern $\big( u_*, \Phif_i(u_*), \Phis_i(u_*) \big)$ is globally-in-time nonlinearly 
stable, that is,  for every perturbation with sufficiently small total variation  
there exists a global--in--time solution $u=u(t,x)$ to \eqref{11} which is 
a splitting--merging pattern satisfying the initial condition \eqref{3.2}. 
\end{theorem}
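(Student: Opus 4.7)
The plan is to prove Theorem \ref{mainresult} by constructing front-tracking approximations $u_h$ and bounding them uniformly in $h$ through a Glimm-type functional tailored to the splitting--merging structure. The initial datum $u_h(0,\cdot)$ approximates \eqref{3.2}: the background jump $\overu_0$ is resolved by the nonclassical Riemann solver of Theorem \ref{thm:solver_nucleation}, producing either a single large classical front $C^\downarrow$ or the pair $(N^\downarrow, C^\uparrow)$; each discontinuity of $v_0$ is decomposed into its $N$ small wave fans. Between interactions, fronts travel at their Rankine--Hugoniot speeds, and at every interaction the local Riemann problem is solved again with the nonclassical solver, rarefactions being discretized into small fronts and non-physical fronts of total strength $O(h)$ absorbing the unavoidable error.

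The central object of the proof is a generalized Glimm functional
\be
F(u) \, := \, L(u) + K \, Q(u), \qquad
L(u) := \sum_\gamma |\sigma_\gamma|, \qquad
Q(u) := \sum_{(\alpha,\beta) \in \mathcal A} w_{\alpha\beta} \, |\sigma_\alpha| \, |\sigma_\beta|,
\ee
where $\sigma_\gamma$ is the \emph{generalized} strength of Definition \ref{defn:waveLL} (rather than a Euclidean one), the index set $\mathcal A$ collects pairs of fronts that will eventually meet, and the weights $w_{\alpha\beta}$ are inspired by \cite{Liu-monograph,IL} and boosted for small waves located close to the large fronts at $y(t)\leq z(t)$. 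The decisive property, Lemma \ref{lem:equivalence_norm}, is that under a merging $N^\downarrow + C^\uparrow \to C^\downarrow$ or the reverse splitting, $L$ is conserved to leading order. Large structural transitions therefore leave $F$ essentially unchanged, so that $F$ is driven only by small-wave interactions, which is what makes the Glimm strategy close.

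Third, I would carry out the interaction analysis case-by-case. Small-small interactions in any combination of families are handled by the standard Glimm estimate. A small wave impinging on a large classical shock $C^\uparrow$ or $C^\downarrow$ is controlled by the Lax analysis read in the generalized parametrization. A small wave crossing the large nonclassical shock $N^\downarrow$ is controlled by the Lipschitz continuity (H2) of $\Phif_i$ together with the strict contractivity (H4), which bounds both the perturbation of $N^\downarrow$ and the outgoing small waves. Merging of the large pair and splitting of $C^\downarrow$ are continuous in $L$ by Lemma \ref{lem:equivalence_norm}; moreover, under the nucleation criterion \eqref{state}, any splitting requires the cumulative perturbation of $C^\downarrow$ to exceed the nucleation gap $\eta$ of \eqref{3.5}, and is therefore paid for by a definite drop of $Q$. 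The weights $w_{\alpha\beta}$ and the constant $K$ are tuned so that $F$ is strictly decreasing at every interaction involving a large wave, and non-increasing overall. The main obstacle is the small-wave/large-wave interaction near the junction $m = \mu_i^\sharp(u_-)$ of the nonclassical wave curve $\WW_i^{nc}(u_-)$: this curve is only Lipschitz there (Theorem \ref{thm:solver_nucleation}), so the Euclidean strength would exhibit order-one jumps whenever a perturbation pushes the outgoing Riemann solution across the junction. Definition \ref{defn:waveLL} removes this discontinuity, but the interaction constants must still be recomputed on the two sides of the junction and shown to combine into a genuinely contracting estimate for $Q$; this is where the specific Liu-type form of the weights is essential.

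Once the uniform bound $F(u_h(t,\cdot)) \leq C\bigl(\eps_0 + |u_*^n - u_*|\bigr)$ is established, $\operatorname{TV}(u_h(t,\cdot))$ is uniformly bounded and the standard Lipschitz-in-$t$ estimate gives $L^1_{\mathrm{loc}}$ compactness of $\{u_h\}$; Helley's theorem extracts a subsequence $u_h \to u$. The entropy inequality \eqref{12}, the kinetic relation \eqref{42}, and the nucleation criterion \eqref{state} pass to the limit using the Lipschitz regularity of $\Phif_i$ and of $\mu_i^n$, yielding a nonclassical entropy solution $u$ which by construction retains the splitting--merging structure for all time, as claimed.
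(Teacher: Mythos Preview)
Your overall strategy matches the paper's: front-tracking approximations, the generalized strength of Definition~\ref{defn:waveLL} so that Lemma~\ref{lem:equivalence_norm} makes the splitting/merging transitions continuous in the linear part, a Liu--Iguchi--LeFloch weighted interaction potential, a case-by-case interaction analysis exploiting (H2) and (H4) for the crossing of $N^\downarrow$, and Helly compactness at the end. Where you diverge is in the construction of the linear functional. The paper does \emph{not} use an unweighted $L=\sum_\gamma|\sigma_\gamma|$; instead it measures only the small waves and partitions them by position relative to the big fronts, setting $W=V_L+V_M+V_R$ with distinct weights $k_L,k_M,k_R$ and $k_L^\lessgtr,k_M^\lessgtr,k_R^\lessgtr$. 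The constraints $k_L>(1+\Cff)k_M$ (for the $\alpha_i N\to N'\alpha_i'$ crossing) and the monotonicities $k_L^<<k_M^<<k_R^<$, $k_L^>>k_M^>>k_R^>$ (for $\beta_j$-crossings) are what produce a \emph{first-order} drop in $W$ at every strong--weak interaction; the speed-weighted $Q$ then only has to absorb the second-order secondary waves. Your scheme pushes all of this work into ``boosted'' weights $w_{\alpha\beta}$ in $Q$, which is closer in spirit to the Schochet/Bressan strong-shock approach and is in principle viable, but you have not specified those weights, and the paper's region-weighted $W$ is exactly the concrete mechanism that makes the constants explicit (and, in particular, makes the uniform-in-strength Theorem~\ref{mainresult_small} go through).

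One further correction: your appeal to the nucleation gap $\eta$ to ``pay for'' splittings is misplaced here. Theorem~\ref{mainresult} is stated and proved \emph{without} any nontrivial nucleation criterion; splittings and mergings need not cost anything in $F$ beyond the quadratic secondary waves, and the analysis in Cases~1, 2, 5 handles them purely through the continuity of the generalized strength plus the drop in the speed-weighted $Q$. The nucleation argument you sketch belongs to the separate finiteness result (Theorem~\ref{nucleationresult}), where one tracks the signed variation of waves entering the region between $N^\downarrow$ and $C^\uparrow$ over a full cycle and shows $[W+KQ]\le -c\eta$ per cycle.
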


\begin{theorem}[Nonlinear stability of splitting--merging patterns with arbitrarily small strength]\label{mainresult_small} 
Given any strictly hyperbolic system \eqref{11} with genuinely nonlinear, linearly degenerate, 
or concave-convex characteristic families, there exist positive constants $\kappa_*$ and $\delta_* < \delta_0$
depending only upon the flux function and the constant $\Cff$ arising in (H4), so that the following property holds: 
for every family of kinetic mappings $\Phif_i$ 
associated with concave-convex families (indexed by $i$), provided they satisfy the conditions (H1)--(H4), 
any three wave pattern $\big( u_*, \Phif_i(u_*), \Phis_i(u_*) \big) \in \mathcal{B}(\delta_*)^3$ 
is globally-in-time nonlinearly stable, that is,  
for every perturbation with total variation $\eps_0 < \kappa_* |\sigma(u_*,\Phis_i(u_*))|$ 
there exists a global--in--time solution $u=u(t,x)$ to \eqref{11} which is 
a splitting--merging pattern satisfying the initial condition \eqref{3.2}. 
\end{theorem}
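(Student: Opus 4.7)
The plan is to follow the same front-tracking strategy that drives Theorem~\ref{mainresult}, but to track every constant and verify it depends only on the flux $f$, the entropy pair $(U,F)$, and the contractivity constant $\Cff$ from hypothesis (H4), never on the magnitude $|\sigma_*| := |\sigma(u_*,\Phis_i(u_*))|$ of the unperturbed large wave. The scaling condition $\eps_0 < \kappa_* |\sigma_*|$ is precisely what allows the stability estimates to close uniformly as $|\sigma_*| \to 0$.

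First, I would fix $\delta_* \leq \delta_0$ small enough that, on $\Bcal(\delta_*)$, the maps $\psi_i$, $\Phif_i$, $\Phis_i$, and the scalar functions $\muin$, $\muif$, $\muis$, together with the Hugoniot and integral curves of every family, are Lipschitz with bounds controlled by $\|f\|_{C^3}$, $\|U\|_{C^3}$, and $\Cff$ alone. In this regime the equivalence constants $c, C$ produced by Lemma~\ref{lem:equivalence_norm} are uniform. I then build a piecewise-constant front-tracking approximation $u_h$ of \eqref{3.2}, applying Theorem~\ref{thm:solver_nucleation} at $t=0$ and at every subsequent front crossing. At each time the approximation has the splitting-merging structure plus a finite set of small fronts $\mathcal{D}'(t)$, and I define a Lyapunov functional
\[
V(u_h(t)) \,:=\, |\sigma_L(t)| \,+\, K_1 \sum_{\gamma \in \mathcal{D}'(t)} |\sigma_\gamma| \,+\, K_2\, Q(u_h(t)),
\]
where $\sigma_L(t)$ is the generalized total strength of the large $i$-waves (in the sense of Definition~\ref{defn:waveLL}) and $Q$ is a weighted Glimm-type interaction potential assigning extra weight to small $i$-fronts approaching the big waves, in the spirit of the scalar construction of \cite{LL} adapted to systems.

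The second step is to show that $V$ is nonincreasing, up to controlled jumps, at each interaction. Three classes arise: (i) small-small interactions off the large waves, controlled by the standard Glimm bound $|\Delta\sigma| \leq C\,|\sigma_1||\sigma_2|$ absorbed by $Q$; (ii) small-big interactions, where a small $j$-front with $j\neq i$ crosses a large $i$-wave, producing an $O(|\sigma_{\mathrm{small}}||\sigma_L|)$ variation of the big wave and reflections into other families; (iii) big-big merging-splitting events, governed directly by Lemma~\ref{lem:equivalence_norm} and by (H4). The continuity property of Lemma~\ref{lem:equivalence_norm} makes $|\sigma_L|$ continuous across merging and across any splitting for which the nucleation threshold coincides with $\Phis_i$, while the strict contractivity of $\Phif_i \circ \Phif_i$ with constant $\Cff < 1$ delivers a genuine decrease of $|\sigma_L|$ at the close of every full splitting-merging cycle, up to an error of size $O(\eps_0)$. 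I then choose $\kappa_*$ small enough that the cumulative small-wave contribution in any big-wave event is strictly dominated by the contractive margin $(1-\Cff)/(1+B^\flat)$ already extracted in the proof of Lemma~\ref{lem:equivalence_norm}. With this, $V$ is uniformly bounded in $t$ and $h$, no third large wave is ever generated, and the positions $y(t)\leq z(t)$ of the big waves evolve in a controlled way; Helly's theorem then furnishes a convergent subsequence $u_h \to u$, whose limit inherits the entropy inequality, the kinetic relation, and the nucleation criterion by passing the pointwise relations to the limit through the finitely many big waves.

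The hard part is step three: genuine uniformity as $|\sigma_*| \to 0$. The quadratic interaction bound $|\Delta\sigma| \leq C|\sigma_1||\sigma_2|$ is catastrophic if a small wave has strength $\eps \sim |\sigma_*|$, since it would then perturb the large wave by a fraction of order one. The smallness hypothesis $\eps_0 < \kappa_* |\sigma_*|$ forces the relative size $\eps_0 / |\sigma_*| < \kappa_* \ll 1$, so each small-big interaction changes $|\sigma_L|$ by a fraction at most $\kappa_*$, strictly less than the contractive gain. Showing that no auxiliary constant---in particular the Lipschitz norm of $\Phif_i$ and the constant $B^\flat$---depends hiddenly on $|\sigma_*|$ is what dictates the requirement that (H1)--(H4) hold \emph{uniformly} on all of $\Bcal(\delta_*)$, and is the delicate accounting task at the heart of the proof.
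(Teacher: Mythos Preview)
Your proposal contains a genuine conceptual gap in how the contractivity hypothesis (H4) is deployed. You claim that ``the strict contractivity of $\Phif_i \circ \Phif_i$ \ldots delivers a genuine decrease of $|\sigma_L|$ at the close of every full splitting--merging cycle.'' This is false: Lemma~\ref{lem:equivalence_norm} gives exact additivity $\sigma(u_*,u_*^\sharp) = \sigma(u_*,u_*^\flat) + \sigma(u_*^\flat,u_*^\sharp)$, so $|\sigma_L|$ is \emph{continuous} across splitting and merging, with no contractive loss. Indeed, the paper explicitly allows infinitely many splitting--merging cycles when the nucleation threshold is trivial, so no per-cycle decrease of the big waves can be the engine here. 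The constant $\Cff$ enters instead at the level of \emph{weak} $i$-waves: when a weak $\alpha_i$ crosses the nonclassical shock (Case~4), the transmitted wave satisfies $|\alpha_i'| \le \Cff|\alpha_i|$, and this is what makes the weighted weak-wave functional decrease once one imposes $\kappaL > (1+\Cff)\kappaM$.

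The paper's functional is accordingly quite different from yours: it does \emph{not} include $|\sigma_L|$ at all. One sets $\myW = V_L + V_M + V_R$, a sum over \emph{weak} waves only, with region- and family-dependent weights $\kappaL, \kappaM, \kappaR, k_*^{\lessgtr}$ satisfying (W1)--(W3), and pairs it with a speed-weighted potential $Q$ in which approaching pairs involving an $i$-wave carry the factor $|v_x-v_y|^+$. The interaction lemmas (Cases~1--7) then force constraints of the form $K \gtrsim 1/|\sigma_*|$ (equations \eqref{bound_case1a}--\eqref{bound_case1c}) and $\eps(t) \lesssim |\sigma_*|$ (equations \eqref{perturbation:bound1}--\eqref{perturbation:bound3}). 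The entire uniformity argument for small $|\sigma_*|$ is the two-line observation that both bounds are linear in $|\sigma_*|$, so $K\,\myW \lesssim K\eps(t) \lesssim (1/|\sigma_*|)\cdot|\sigma_*| = O(1)$ stays bounded and the equivalence between $\myW+KQ$ and the weak-wave total variation is uniform. Your third step gestures at the right scaling $\eps_0/|\sigma_*| < \kappa_*$, but channels it through the wrong mechanism (a nonexistent contractive gain on $|\sigma_L|$) rather than through the $K$-versus-$\eps$ balance that actually closes the estimate.
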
 

Observe that the above theorem applies to a broad class of kinetic functions and allows the strength of the classical/nonclassical pattern to have arbitrarily small strength, while the perturbation is allowed to be a small (but fixed) ratio of those waves.


\subsection{Interaction estimates} 
\label{sec:interaction_estimates}

The purpose of this section is to introduce a functional $\myW$ that measures the total variation
of the weak waves and provides estimates for its variation during the interactions which occur in splitting-merging
solutions. This will require an extension of Glimm's interaction estimates~\cite{Glimm,Smoller,Dafermos} to the
interactions involving non-classical waves. 

We begin with our construction of a functional equivalent to the
total variation of the weak waves and depending on parameters whose values
will be determined in Section \ref{sec:globalestimates}. Henceforth, it will be convenient
to use the symbols  $\Npm, \CUp, \CDown$ to denote respectively, the non-classical
wave, its classical wave partner connecting increasing states, and the classical wave
present when the non-classical wave is absent.
We define $y^h(t)$ to be the position of either crossing wave,  $\Cpm$ or $\Npm$,
 in the front-tracking approximations and, when $\CUp$ is present, we define $z^h(t)$ to be the position of $\CUp$.
 When $\CUp$ doesn't appear, we assume $z^h = y^h$.

 The shock positions allow us to define three functionals, each associated to specific families of waves, 
 \begin{gather*}
       V_{L,k}(t) = \sum_{\substack{\text{$k$-waves}  \\x < y_h(t)}} |\sigma_{k,x}|, \qquad  
       V_{M,k}(t) = \sum_{\substack{ \text{$k$-waves} \\ y^h(t) < x < z^h(t)}} |\sigma_{k,x}|, \qquad 
       V_{R,k}(t) =  \,\, \sum_{\substack{ \text{$k$-waves} \\ z^h(t) < x}} |\sigma_{k,x}|.
 \end{gather*}
 In practice, a linear combination of these measures of variation will be used. Assuming the
 strong waves belong to the $i$-th family of waves, then  we introduce 
 the positive constants $k_L, k_M, k_R$, 
 $k^{\lessgtr}_L, k^{\lessgtr}_M$ and $k^{\lessgtr}_R$, and the functionals
 \begin{align*}
      V_L(t) = & \,\, \kappaL V_{L,i}(t)  +   \kappaLless \sum_{j < i}  V_{L,j}(t) +  \kappaLgrt  \sum_{j > i} V_{L,j}(t), \\
      V_M(t) = & \,\, \kappaM V_{M,i}(t)  +   \kappaMless \sum_{j < i}  V_{M,j}(t) +  \kappaMgrt  \sum_{j > i} V_{M,j}(t),\\
      V_R(t) = & \,\, \kappaR V_{R,i}(t)  +   \kappaRless \sum_{j < i}  V_{M,j}(t) +  \kappaRgrt  \sum_{j > i} V_{R,j}(t), \\
   \myW(t) = & \,\,  V_L(t) + V_R(t) + V_M(t).
 \end{align*}
For convenience, we will also be writing $|\, \alpha| := |\sigma(\alpha)|$ and
$$
  \operatorname{sign}( \alpha ) = 
   \begin{cases}
        -1, & \text{ $\alpha$ is a shock,} \\
        +1, & \text{ otherwise,}
   \end{cases}
$$

We start by listing all possible wave interactions which may occur within a splitting-merging solution. All possible interactions in 
the systems case are, up to the generation of weak waves in families $i \neq j$, a subset of those which may be found
in the scalar case and described in LeFloch's monograph \cite{LeFloch-book}.
Following the notation introduced there, let $C$ and $N$ denote respectively classical and nonclassical waves. Outgoing 
waves will be distinguished by a prime superscript, such as $C'$, and if waves
connect states $u_- > u_+$ ($u_- < u_+$) then we will write $\CDown$ ($\CUp$)
depending on the type of wave. Sometimes, we may also include a subscript $\pm$ ($\mp$) to indicate more precisely
that a wave  $C_{\pm}$ ($C_{\mp}$) connects $u_- > 0 > u_+$ ($u_- < 0 < u_+$).  
Finally, lowercase greek letters, like $\alpha_i$ and $\beta_j$, will denote weak waves in respectively
the $i$-th and $j$-th family. Using the notation above, interactions involving two $i$-waves are the following.

\begin{enumerate}

\item[Case 1.]  {\bf Splitting of a big wave:} $C^\downarrow \alpha_i \longrightarrow N C^\uparrow$ or 
$\alpha_i C^\downarrow \longrightarrow N C^\uparrow$.

\item[Case 2.]  {\bf Merging of two big waves:} $N C^\uparrow \longrightarrow C^\downarrow$.

\item[Case 3.] {\bf Perturbation of a big classical wave:} $C \alpha_i \longrightarrow C$ or 
$\alpha_i C  \longrightarrow C$ with $C = \Cpm, \CUp$.

\item[Case 4.] {\bf Perturbation of a big nonclassical wave:}  $\alpha_i N \longrightarrow N \alpha_i'$. 


\end{enumerate}

Secondly, interactions may involve a big $i$-wave and a small wave of another family $i \neq j$: 
\begin{enumerate}

\item[Case 5.]  {\bf Splitting of a big wave:} $C^\downarrow \beta_j \longrightarrow N C^\uparrow$ or 
$\beta_j C^\downarrow \longrightarrow N C^\uparrow$.

\item[Case 6.]  {\bf Perturbation of  a big nonclassical wave:} $N \beta_j \longrightarrow \beta_j' N $.

\item[Case 7.] {\bf Perturbation of  a big classical wave:} $C \beta_j \longrightarrow \beta_j' C$ or 
$\beta_j C  \longrightarrow C \beta_j'$ with $C = \Cpm, \CUp$.


\end{enumerate}

That these are the only possible interactions within a splitting-merging solution is clear for very short times
but will only hold at all times when we show that the strengths of the weak waves remain small.

We now introduce an extension of Glimm's interaction estimates to the interactions given above not previously covered
by interactions between classical waves. Informally, these estimates show that up to a quadratic error,
the strengths of the outgoing waves are the sum of the strengths of the incoming waves.
Classically, if $\boldsymbol{\alpha} = (\alpha_1, \ldots, \alpha_N)^T \in \mathbb{R}^N $ is the vector
of the signed strengths of the $N$ waves connecting $u_\ell$ to $u_m$ according to the 
Riemann problem, then it means that there are $N-1$ intermediate states connecting $u_\ell$
to $u_m$ defined by 
\begin{align*}
    \widetilde{u}_1 = & \, \psi_1(\alpha_1;u_\ell) , \\
    \widetilde{u}_2 = & \, \psi_1(\alpha_1;\widetilde{u}_1) , \\
        \vdots  \, & \\
     u_m = & \, \psi_1(\alpha_1; \widetilde{u}_{N-1}).
\end{align*}
Hence, if $\boldsymbol{\alpha} \in \mathbb{R}^N$ describes the solution to the Riemann problem connecting
$u_\ell$ to $u_m$ and $\boldsymbol{\beta} \in \mathbb{R}^n$ does the same between
$u_m$ and $u_r$, then the solution $\boldsymbol{\gamma}$ to the Riemann problem
connecting $u_\ell$ to $u_r$ satisfies
$$
    \boldsymbol{\gamma} = \boldsymbol{\alpha} + \boldsymbol{\beta} + \big( \text{ quadratic }\big).
$$
Unfortunately, it is not trivial to extend the previous estimate to simple waves from a concave-convex
family $i$ connecting $u_-$ and $u_+$ because the map $\psi_i$ does not satisfy
\be   \label{failure-addition}
     u_+ = \psi_i \big( \sigma(u_-,u_+); u_-\big),
\ee
for $\sigma$ defined by \eqref{defn:ncwavestrength}. Hence, care will be needed when discussing
the parametrization of the wave curves. Before proceeding,
recall that two simple waves $\boldsymbol{\alpha}$ and $\boldsymbol{\beta}$, with  $\boldsymbol{\alpha}$
to the left of  $\boldsymbol{\beta}$ and belonging repesctively to families $k$ and $\ell$,
are said to be {\it approaching} if $k > \ell$ or if $k=\ell$ but at least one of the waves is a shock.

\begin{theorem}[Glimm's interaction estimates for a nonclassical Riemann solver]  \label{thm:Glimm_interaction_estimates}
Consider any strictly hyperbolic system \eqref{11} with genuinely nonlinear, linearly degenerate, 
or concave-convex characteristic families,  and assume there exists  a kinetic mapping $\Phif_i$ 
associated with a concave-convex family (indexed by $i$), uniformly satisfying the conditions (H1)--(H4).
Then there exists a sufficiently small $\delta_1$ such that for any states
$u_\ell,u_m, u_r \in \Bone$, connecting simple waves $\boldsymbol{\alpha}, \boldsymbol{\beta} \in \mathbb{R}^N$
and interacting according to Case 1-Case 7, or connecting only only weak waves, we have the estimates
$$
        \boldsymbol{\gamma}  =  \boldsymbol{\alpha} + \boldsymbol{\beta} 
         + \mathcal{O}\bigg( \sum_{\substack{\alpha_k, \beta_\ell \\ \text{approaching}}} |\alpha_k| \cdot |\beta_\ell| \bigg), 
$$
where $\boldsymbol{\gamma}$ is the vector of signed strengths for the waves
in the solution to the Riemann problem for the consecutive states $u_\ell$ and $u_r$.
\end{theorem}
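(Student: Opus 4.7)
The plan is to reduce Theorem 3.7 to a finite case analysis, since any interaction inside a splitting--merging solution falls into one of Cases 1--7 or is an interaction between two weak waves. I would organize the argument into three tiers of difficulty: (a) interactions between two weak waves, (b) perturbation of a single large wave by a weak wave (Cases 3, 4, 6, 7), and (c) the splitting/merging events themselves (Cases 1, 2, 5). Throughout, I would fix $\delta_1$ small enough that every wave curve piece appearing in Theorem 2.5 and Proposition 2.3 has the $C^2$ bounds stated there, and that the hypotheses (H1)--(H4) hold uniformly.

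For tier (a) I would simply invoke the classical Glimm interaction estimate as in Smoller/Dafermos: both wave curves $\psi_k(\,\cdot\,;u)$ are $C^2$ in $(s,u)$, so a second-order Taylor expansion in the two signed strengths gives a remainder $O(|\alpha_k|\,|\beta_\ell|)$ for every approaching pair. If the small waves lie in the concave-convex family $i$ on the same side of $\mathcal{M}_i$, the generalized strength reduces to the euclidean one; if they straddle $\mathcal{M}_i$, the equivalence established in Lemma 3.5 converts the standard quadratic bound into a bound in $\sigma$ with a harmless change of constants.

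For tier (b), I would fix the large $i$-wave ($\Cpm$, $\CUp$, or $N$) as a base configuration and treat the weak incoming wave as a perturbation parameter. On each smooth branch of $\mathcal{W}_i^{nc}$ the implicit function theorem applied to the equation locating the intermediate state (the intersection of the outgoing $j$-wave curves with the perturbed large-wave curve) yields a $C^1$ dependence on the perturbation parameter. Expanding to first order in this parameter and collecting the linear terms reproduces $\boldsymbol{\alpha}+\boldsymbol{\beta}$, while the remainder is $O(|\alpha|\,|\beta|)$ because every cross-term is paired with the large-wave perturbation. Case 5 is handled identically to Case 1 below, except that the perturbation now comes from another family and (H3) guarantees that $\mu_i(\Phi^\flat_i(\psi_i(u;s)))$ depends $C^1$-regularly on a small shift of the base state $u$ within $\mathcal{B}(\delta_1)$.

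For tier (c), the linchpin is the continuity identity of Lemma 3.5. In the unperturbed three-state pattern one has exactly $\sigma(u_*,u_*^\sharp)=\sigma(u_*,u_*^\flat)+\sigma(u_*^\flat,u_*^\sharp)$, so the linear part of the Glimm relation is automatic. Perturbing the incoming configuration by $\alpha$, I would write $u_*^\flat$ and $u_*^\sharp$ as implicit functions of the perturbation (using that each branch of $\mathcal{W}_i^{nc}$ is smooth away from the transition value $m=\mu_i^\sharp$, and that the transition itself is absorbed into the definition of $\sigma$ through $\widetilde{\Phi^\flat_{i0}}$). Differentiating once in the perturbation parameter gives, after using the three-state identity as a normal form, a remainder that is bilinear in the incoming strengths. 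Since at splitting/merging the two incoming waves are approaching (same family, at least one shock), this remainder is exactly $O(|\alpha|\,|\beta|)$ for the approaching pair.

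The main obstacle is the loss of $C^2$ regularity of $\mathcal{W}_i^{nc}$ at the transition $m=\mu_i^\sharp(u_-)$ between the classical and nonclassical branches, where the naive implicit function argument would produce only a \emph{linear} error bound. The way out is exactly the choice of generalized strength: by composing with $\widetilde{\Phi^\flat_{i0}}$, the nonclassical branch is unfolded onto the same side as the classical branch, so the derivative jump at $\mu_i^\sharp$ becomes a derivative jump in a Lipschitz map to which (H2) and the uniform contraction estimate (H4) apply. Combined with the norm equivalence from Lemma 3.5 (whose constants depend only on the bounds in (H1)--(H4) and hence are uniform in the base point on $\mathcal{B}(\delta_1)$), the Lipschitz jump contributes at most $O(|\alpha|\,|\beta|)$ to the outgoing strengths, which is what is required. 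Verifying this uniformity and the sign conventions that make the constants $\kappa$ in $\mathcal{W}$ usable in Section 3.3 is the delicate technical point, but no new idea beyond (H1)--(H4) and Lemma 3.5 is needed.
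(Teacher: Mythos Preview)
Your tiered organization and the identification of Lemma~3.5 as the linchpin are sensible, but the argument differs from the paper's in two substantive ways, and one step in your outline has the bookkeeping backwards.

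The paper does not attempt to extract the sharp approaching-pair bound directly. Following Smoller, it first proves only the \emph{weak} estimate $\boldsymbol{\gamma}=\boldsymbol{\alpha}+\boldsymbol{\beta}+\mathcal{O}\big((|\boldsymbol{\alpha}|+|\boldsymbol{\beta}|)^2\big)$ for each new nonclassical case, and then observes that Smoller's inductive refinement (via additivity of rarefactions) carries over verbatim once this weak bound is available. You omit this two-step reduction. In your tier~(b), the implicit function theorem gives only $C^1$ dependence on the weak parameter, hence an $o(|\text{weak}|)$ remainder after the linear term; moreover the linear term in the weak parameter does \emph{not} reproduce $\boldsymbol{\alpha}+\boldsymbol{\beta}$ in the components $j\neq i$ --- that linear term is already of size $\mathcal{O}(|\text{weak}|\cdot|\text{strong}|)$ and \emph{is} the error you are after. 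So the sentence ``collecting the linear terms reproduces $\boldsymbol{\alpha}+\boldsymbol{\beta}$, while the remainder is $\mathcal{O}(|\alpha|\,|\beta|)$'' is not how the terms fall out.

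For tier~(c) the paper does not differentiate a normal form. It uses the expansion~(2.7) of $\psi_i$ to show that the end states $u_\ell',u_r'$ of the outgoing $i$-wave block are within $\mathcal{O}\big((|\boldsymbol{\alpha}|+|\boldsymbol{\beta}|)^2\big)$ of $u_\ell,u_r$, and then verifies additivity of $\sigma$ by an explicit algebraic telescoping: e.g.\ in Case~1,
\[
\sigma(N')+\sigma(C^{\uparrow\prime})=\mu_i\big(\Phi^\flat_{i0}(u_r')\big)-\mu_i(u_\ell')
=\mu_i\big(\Phi^\flat_{i0}(u_r)\big)-\mu_i(u_\ell)+\mathcal{O}(\cdot)
=\sigma(C^\downarrow_\pm)+\sigma(\alpha_i)+\mathcal{O}(\cdot),
\]
using only the Lipschitz continuity of $\Phi^\flat_{i0}$. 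This works branch by branch and never differentiates across the transition at $\mu_i^\sharp$, so the $C^2$ obstacle you spend the last paragraph on simply does not arise in the paper's argument. Your route could likely be completed, but the paper's direct expansion plus telescoping is shorter and makes transparent exactly where (H2) and the definition of $\sigma$ through $\Phi^\flat_{i0}$ enter.
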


\begin{proof} The literature contains a few proofs of this result for interactions between classical 
waves \cite{Glimm,Smoller,Dafermos}; here we will adapt the proof given in Smoller \cite{Smoller}. The proof
proceeds in two steps. The first one is to demonstrate the simpler quadratic estimate 
\be   \label{eq:weak_Glimm}
         \boldsymbol{\gamma} =  \boldsymbol{\alpha} + \boldsymbol{\beta}  
         + \mathcal{O}\big( \big[ |\boldsymbol{\alpha}| + |\boldsymbol{\beta}| \big]^2 \big). 
\ee
In a second step, one uses the additivity of simple rarefaction waves to demonstrate
inductively that the more accurate estimate holds. A careful analysis of the second part
shows that the argument also applies to nonclassical waves if the weaker
estimate \eqref{eq:weak_Glimm} still holds. Our objective will therefore be to
demontrate the weaker estimate for the new interactions present in a
splitting-merging pattern. There are in fact five new interactions that need to be discussed, 
from among the interactions listed previously,
but the cases 1 and 4 are similar to the cases 5 and 6 respectively. Furthermore,
Case 3 and Case 7 are entirely classical and easy to treat, despite the failure
of \eqref{failure-addition}. For the sake of brevity, 
we will therefore only treat Case 1, 2 and 4.

\textbf{Case 1.} ($\Cpm \alpha_i \rightarrow N' {\CUp}'$ or $\alpha_i \Cpm  \rightarrow N' {\CUp}'$). 
We consider the first interaction where $\Cpm$ is connecting states $u_\ell$ and $u_m$
while $\alpha_i$ connects $u_m$ and $u_r$. For convenience, we will write
$\delta_C = \mu_i(u_m) - \mu_i(u_\ell)$ and $\delta_\alpha = \mu_i(u_r) - \mu_i(u_m)$. Using the 
expansion~\eqref{27}, we find
\bse
\begin{align}    
u_m = & \, u_{\ell}+ \frac{ \delta_C }{(\nabla \mu_i \cdot r_i)(u_\ell)} r_i(u_{\ell})
	                      +\mathcal{O}\big( \delta_C^2 \big),  \label{um1}  \\
u_r   = & \, u_m + \frac{\delta_\alpha}{(\nabla \mu_i \cdot r_i)(u_m)} r_i(u_m)
                                            +\mathcal{O}\big( \delta_\alpha^2 \big). \label{ur1} 
\end{align}
\ese
Substituting the first expansion for $u_m$ into the second, and accounting for the
dependence on $u_m$ in the second expansion, we find
\begin{align} 
u_r = & \,  \Big( u_{\ell}  +  \frac{\delta_C}{(\nabla \mu_i \cdot r_i)(u_\ell)} r_i(u_{\ell})  \Big)
                     + \frac{\delta_\alpha}{(\nabla \mu_i \cdot r_i)(u_m)} r_i(u_m)
                      + \mathcal{O}\big( \delta_C^2 + \delta_\alpha^2 \big) \nonumber \\
       = & \, u_\ell +  \frac{\delta_C + \delta_\alpha}{(\nabla \mu_i \cdot r_i)(u_\ell)} r_i(u_{\ell})
                     + \mathcal{O}\big( \big[ |\delta_C| + |\delta_\alpha| \big]^2 \big). 
                     \label{ur12}
\end{align}
This expansion used the following two simple identities
\begin{align*}
   \frac{1}{(\nabla \mu_i \cdot r_i)(u_m)} = & \frac{1}{(\nabla \mu_i \cdot r_i)(u_\ell)} + \mathcal{O}(|\delta_C|), 
   \qquad
   \quad 
     r_i(u_m) = r_i(u_\ell) + \mathcal{O}(|\delta_C|).
\end{align*}
The solution to the Riemann problem is described by the vector $\boldsymbol{\gamma}$ of
signed strengths of the waves, leading to the expansion
\be  \label{riemannsolver:Taylor}
    u_r = u_\ell + \sum_k \gamma_k r_k(u_\ell) + \mathcal{O}( |\boldsymbol{\gamma}|^2).
\ee
Because a Taylor's expansion with respect to small parameters is unique, we have that
\begin{align*}
    \gamma_k = & \mathcal{O}\big( \big[ |\delta_C| + |\delta_\alpha|\big]^2  \big) \quad \text{for $k \neq i$;} \qquad
    \quad 
    \gamma_i =   \frac{\delta_C + \delta_\alpha}{(\nabla \mu_i \cdot r_i)(u_\ell)} 
           + \mathcal{O}\big( \big[ |\delta_C| + |\delta_\alpha|\big]^2  \big).
\end{align*}
If $u_\ell'$ and $u_r'$ are the states connected by the $i$-th wave in the Riemann problem, then
\begin{align*}
    | u_\ell - u_\ell'| = &  \mathcal{O} \big(   \big[ |\delta_C| + |\delta_\alpha|\big]^2 \big), 
    \qquad \quad 
       | u_r - u_r'| = \mathcal{O} \big(   \big[ |\delta_C| + |\delta_\alpha|\big]^2 \big).
\end{align*}
The Riemann solver of Proposition \ref{thm:solver_nucleation} tells us the outgoing waves
of the $i$-th family are a nonclassical shock $N'$ followed by a classical shock $\CUp'$.
To check additivity, we compute
\begin{align*}
        \sigma({\Npm}') + \sigma({\CUp}') = &\, \mu_i\big( \Phifo \circ \Phiif (u_{\ell}') \big)
                                                                         - \mu_i(u_{\ell}') 
                               + \mu_i \big( \Phifo(u_r') \big) - \mu_i\big( \Phifo \circ \Phiif (u_{\ell}') \big) \\
                               = & \, \mu_i \big( \Phifo(u_r) \big) - \mu_i(u_{\ell}) 
                                        +   \mathcal{O} \big(   \big[ |\delta_C| + |\delta_\alpha|\big]^2 \big) \\
                               = & \, \mu_i( \Phifo(u_m) ) - \mu_i(u_{\ell})  
                                        + \mu_i \big( \Phifo(u_r) \big) - \mu_i(\Phifo(u_m)) 
                                  +   \mathcal{O} \big(   \big[ |\delta_C| + |\delta_\alpha|\big]^2 \big)  \\
                               = & \, \sigma(\Cpm) + \sigma(\alpha_i) 
                                      +   \mathcal{O} \big(   \big[ |\delta_C| + |\delta_\alpha|\big]^2 \big).              
\end{align*}

\textbf{Case 2.} ($ \Npm \CUp \rightarrow {\Cpm}'$). 
We consider a nonclassical wave $\Npm$, separated by states $u_\ell$ and $u_m$,
which interacts with a classical wave connecting $u_m$ and $u_r$. For the scalars 
$\delta_N = \mu_i(u_m) - \mu_i(u_\ell)$ and $\delta_C = \mu_i(u_r) - \mu_i(u_m)$, the 
expansion~\eqref{27} provides
\begin{align*}    
u_m = & u_{\ell}+ \frac{ \delta_N }{(\nabla \mu_i \cdot r_i)(u_\ell)} r_i(u_{\ell})
	                      +\mathcal{O}\big( \delta_N^2 \big),   
	                      \qquad \quad 
u_r   = u_m + \frac{\delta_C}{(\nabla \mu_i \cdot r_i)(u_m)} r_i(u_m)
                                            +\mathcal{O}\big( \delta_C^2 \big).  
\end{align*}
Substituting the first expansion into the second and following the procedure established in
the previous interaction, we find
$$
   u_r =  u_\ell +  \frac{\delta_N + \delta_C}{(\nabla \mu_i \cdot r_i)(u_\ell)} r_i(u_{\ell})
                     + \mathcal{O}\big( \big[ |\delta_N| + |\delta_C| \big]^2 \big) .
$$
By comparing the solution $\boldsymbol{\gamma}$ to the Riemann problem between 
$u_\ell$ and $u_r$, given by the expansion \eqref{riemannsolver:Taylor}, to the previous expansion
we deduce that
\begin{align*}
    \gamma_k = & \mathcal{O}\big( \big[ |\delta_N| + |\delta_C| \big]^2 \big), \qquad \text{for $k \neq i$;} 
    \quad
    \qquad 
    \gamma_i = \frac{\delta_C + \delta_\alpha}{(\nabla \mu_i \cdot r_i)(u_\ell)} 
           + \mathcal{O}\big( \big[ |\delta_N| + |\delta_C| \big]^2 \big).
\end{align*}
If the outgoing wave $\Cpm'$ connects the states $u_\ell'$ and $u_r'$, then 
\begin{align*}
    | u_\ell - u_\ell'| = & \,  \mathcal{O} \big(   \big[ |\delta_N| + |\delta_C|\big]^2 \big), \\
       | u_r - u_r'| = & \,  \mathcal{O} \big(   \big[ |\delta_N| + |\delta_C|\big]^2 \big).
\end{align*}
Using these observations, we verify the additivity of the wave strengths
\begin{align*}
      \sigma({\Cpm}')= & \,   \mu_i\big( \Phifo (u_{r}') \big) - \mu_i(u_{\ell}')  \\
                     = & \,  \mu_i\big( \Phifo (u_{r}) \big) - \mu_i(u_{\ell})
                                      +  \mathcal{O} \big(   \big[ |\delta_N| + |\delta_C|\big]^2 \big) \\ 
                     = & \,  \mu_i\big( \Phifo (u_{r}) \big) 
                                - \mu_i\big( \Phifo \circ \Phiis (u_{\ell}) \big) 
                                + \mu_i\big( \Phifo \circ \Phiis (u_{\ell}) \big)
                                 - \mu_i(u_{\ell}) 
                                +  \mathcal{O} \big(   \big[ |\delta_N| + |\delta_C|\big]^2 \big) \\
                     = & \, \sigma(\Npm) + \sigma(\CDown)   
                               + \mathcal{O} \big(   \big[ |\delta_N| + |\delta_C|\big]^2 \big) .           
\end{align*}

\textbf{Case 4.} ($\alpha_i \Npm  \rightarrow {\Npm}' \alpha_i'$). 
 Consider a pair of simple waves of the $i$-th family with $\alpha_i$ approaching
a nonclassical shock $\Npm$ from the left. We suppose that the neighboring states of the waves are, from left to right, 
$u_\ell, u_m$ and $u_r$ and we introduce the scalars $\delta_\alpha = \mu_i(u_m)- \mu_i(u_\ell)$,
$\delta_N = \mu_i(u_r) - \mu_i(u_m)$. With this notation, the expansion \eqref{27} gives us
\begin{align*}    
u_m = & u_{\ell}+ \frac{ \delta_N }{(\nabla \mu_i \cdot r_i)(u_\ell)} r_i(u_{\ell})
	                      +\mathcal{O}\big( \delta_\alpha^2 \big),   
	                      \qquad \quad 
u_r   = u_m + \frac{\delta_C}{(\nabla \mu_i \cdot r_i)(u_m)} r_i(u_m)
                                            +\mathcal{O}\big( \delta_N^2 \big), 
\end{align*}
which can be combined to obtain
$$
   u_r =  u_\ell +  \frac{\delta_\alpha + \delta_N}{(\nabla \mu_i \cdot r_i)(u_\ell)} r_i(u_{\ell})
                     + \mathcal{O}\big( \big[ |\delta_\alpha| + |\delta_N| \big]^2 \big) .
$$
By comparing this expansion with the one given by the unique solution $\boldsymbol{\gamma}$
to the Riemann problem, written previously as \eqref{riemannsolver:Taylor}, we conclude that
\begin{align*}
    | u_\ell - u_\ell'| = &  \mathcal{O} \big(   \big[ |\delta_\alpha| + |\delta_N|\big]^2 \big), 
    \qquad \quad 
       | u_r - u_r'| =   \mathcal{O} \big(   \big[ |\delta_\alpha| + |\delta_N|\big]^2 \big).
\end{align*}
From these estimates, we can verify additivity in the following way.
\begin{align*}
 \sigma({\Npm}') + \sigma(\alpha_i') = & \, \mu_i \big( \Phifo \circ \Phiif(u_{\ell}') \big) - \mu_i(u_{\ell}') 
                                                         + \mu_i( \Phifo (u_r') ) - \mu_i \big( \Phifo \circ \Phiif(u_{\ell}') \big) \\
                                                     = & \, \mu_i( \Phifo (u_r') ) - \mu_i(u_{\ell}') \\
                                                     = & \, \mu_i \big( \Phifo \circ \Phiif (u_m) \big) - \mu_i(u_{\ell}) 
                                                               + \mathcal{O} \big(   \big[ |\delta_\alpha| + |\delta_N|\big]^2 \big)   \\
                                                     = & \, \mu_i (u_m)  - \mu_i(u_{\ell})  
                                                            + \mu_i( \Phifo \circ \Phiif (u_m) ) - \mu_i (u_m) 
                                                            + \mathcal{O} \big(   \big[ |\delta_\alpha| + |\delta_N|\big]^2 \big)  \\
                                                     = & \,  \sigma(\alpha_i) + \sigma({\Npm}) 
                                                             + \mathcal{O} \big(   \big[ |\delta_\alpha| + |\delta_N|\big]^2 \big).                         
\end{align*}
This completes the proof of Case 4.
\end{proof}
 
 We will now describe a series of lemmas which quantify the changes in the functional 
$\myW$ during the interactions presented in the last section. These results
will sometimes use the nomenclature presented in LeFloch \cite{LeFloch-book},
where the $16$ possible nonclassical interactions were given names. For example,
the expression CR-4 was used to identify the $4$-th type of interaction which may occur 
(under certain conditions) between a shock $C$ on the left and a rarefaction $R$ on the right.

\begin{lemma}[Case 1. $\alpha_i \Cpm  \rightarrow N' \CUp'$ or $\Cpm \alpha_i \rightarrow N' {\CUp}'$] \label{case:1}
Suppose a weak wave $\alpha_i$ interacts with $\Cpm$ to generate a nonclassical and classical shock pair
${\Npm}' {\CUp}'$. Three types of interactions may occur. When the wave $\alpha_i$ is slower than $\Cpm$, then
the interaction must be CR-$4$ and $\alpha_i$ must be a rarefaction. If the wave $\alpha_i$ is faster than $\Cpm$,
then the interaction can be either RC-$3$, with a rarefaction $\alpha_i$, or CC-3 with a shock $\alpha_i$.
When $\alpha_i$ is slower than $\Cpm$, then
\begin{align}
     [V_L] = & \,\, + \kappaLless \, \mathcal{O}(|\Cpm| \cdot | \alpha_i|), \nonumber \\
     [V_M] = & \,\, 0, \nonumber \\ 
      [V_R] = & \,\,   - \kappaR \, | \alpha_i| + \kappaRgrt \, \mathcal{O}(|\Cpm| \cdot | \alpha_i |), \nonumber \\
     [ \myW ] = & \,\,  |\alpha_i | \Big( -\kappaR  
                           + \big(  \kappaLless +  \kappaRgrt \big) \mathcal{O}(|\Cpm| ) \Big). \label{lem1-a}
\end{align} 
When $\alpha_i$ is faster than $\Cpm$, then
\begin{align}
     [V_L] = & \,\, - \kappaL \, | \alpha_i| + \kappaLless \, \mathcal{O}( |\Cpm| \cdot | \alpha_i|),  \nonumber  \\
     [V_M] = & \,\, 0,  \nonumber  \\ 
      [V_R] = & \,\,+ \kappaRgrt \, \mathcal{O}(|\Cpm| \cdot | \alpha_i|),  \nonumber  \\
      [ \myW ] =  & \,\,  |\alpha_i | \Big( -\kappaL 
            + \big(   \kappaLless +  \kappaRgrt \big) \mathcal{O}(|\Cpm| ) \Big). \label{lem1-b}
\end{align} 
\end{lemma}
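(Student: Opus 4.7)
The plan is to apply the Glimm-type interaction estimate of Theorem~\ref{thm:Glimm_interaction_estimates} and then bookkeep where each outgoing weak wave ends up relative to the new front positions $y^h$ (the position of $N'$) and $z^h$ (the position of $\CUp'$), with the incoming $\alpha_i$ being deleted from whichever region it previously occupied.

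First I would sort out which of the three interaction types (CR-$4$, RC-$3$, CC-$3$) can produce the outgoing splitting pattern $N'\CUp'$. By assumption the Riemann problem for the ambient states $u_\ell, u_r$ is resolved by a nonclassical shock followed by a classical one; in particular the nucleation threshold is crossed, so a single classical shock is no longer admissible. Using the convexity and monotonicity of $\lambda_i$ along the Hugoniot and integral curves recorded in Section~\ref{Sec2}, one checks directly that (i) if $\alpha_i$ lies to the right of $\Cpm$ the interaction requires $\alpha_i$ to be a rarefaction, giving CR-$4$; and (ii) if $\alpha_i$ lies to the left of $\Cpm$ the interaction is either RC-$3$ (rarefaction) or CC-$3$ (shock). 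Since the slower/faster assumption in the two subcases of the lemma corresponds precisely to the right/left placement of $\alpha_i$ relative to $\Cpm$, this exhausts all incoming data.

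Next I would invoke Theorem~\ref{thm:Glimm_interaction_estimates} to write the outgoing vector of strengths as $\boldsymbol{\gamma} = \boldsymbol{\alpha} + \boldsymbol{\beta} + \mathcal{O}(|\Cpm|\cdot|\alpha_i|)$. The $i$-component of $\boldsymbol{\gamma}$ is carried by the two large fronts $N'$ and $\CUp'$, which by definition are not counted by $V_{L,i}, V_{M,i}, V_{R,i}$; the components $\gamma_k$ with $k \neq i$ each have size $\mathcal{O}(|\Cpm|\cdot|\alpha_i|)$. By strict hyperbolicity every outgoing $k$-wave with $k<i$ is strictly slower than $N'$, hence after an infinitesimal delay it lies to the left of $y^h$ and contributes to $V_{L,k}$ with weight $\kappaLless$; every outgoing $k$-wave with $k>i$ is strictly faster than $\CUp'$ and contributes to $V_{R,k}$ with weight $\kappaRgrt$. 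No weak wave enters the middle band, so $[V_M]=0$.

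It remains to account for the removal of $\alpha_i$. In the slower-$\alpha_i$ case $\alpha_i$ sat at $x > y^h = z^h$ and contributed $\kappaR |\alpha_i|$ to $V_R$, producing the $-\kappaR|\alpha_i|$ term in \eqref{lem1-a}; in the faster-$\alpha_i$ case it sat at $x < y^h = z^h$ and contributed $\kappaL|\alpha_i|$ to $V_L$, giving the $-\kappaL|\alpha_i|$ term in \eqref{lem1-b}. Combining the contributions from the newly created transverse weak waves with the removal of $\alpha_i$ and summing gives the stated expressions for $[V_L], [V_M], [V_R]$, and hence for $[\myW]$. The main obstacle I anticipate is precisely the case analysis of the second paragraph: verifying that these three interaction patterns are indeed the only ones that can give rise to $N'\CUp'$ and that no additional weak $i$-waves appear in the outgoing fan; once that is settled, the Glimm estimate and the strict ordering of characteristic speeds reduce the computation to routine bookkeeping.
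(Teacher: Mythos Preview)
Your proposal is correct and follows essentially the same approach as the paper's proof: invoke the Glimm-type interaction estimate to bound the secondary $j$-waves ($j\neq i$) by $\mathcal{O}(|\Cpm|\cdot|\alpha_i|)$, place them to the left of $N'$ or the right of $\CUp'$ according to whether $j<i$ or $j>i$, note that nothing lands in the middle band, and subtract $|\alpha_i|$ from whichever side it came from. The paper's proof is considerably terser (it simply records the three variations for each subcase without spelling out the Glimm estimate or the speed-ordering argument), so your write-up is in fact a more explicit rendering of the same computation.
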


\begin{proof}
Consider the case where the weak wave $\alpha_i$ approaches $\Cpm$ from the right and generates a CR-$4$ interaction.
The interaction may generate weak waves in the $j$-th family, $j \neq i$, of strength $\mathcal{O}( |\Cpm| \cdot | \alpha_i|)$,
but otherwise it is easy to see that 
\begin{align*}
      [V_L] = & \,\, \kappaLless \, \mathcal{O}( |\Cpm| \cdot | \alpha_i|), \\
      [V_M ] = &\,\, 0, \\
      [V_R] = &\,\, - \kappaR \, | \alpha_i| + \kappaRgrt \, \mathcal{O}( |\Cpm| \cdot | \alpha_i|). 
\end{align*} 
We now study simultaneously the RC-3 and CC-3 interactions with a weak wave $\alpha_i$ approaching $\Cpm$ from the left.
\begin{align*}
      [V_L] = & \,\, - \kappaL \, | \alpha_i| + \kappaLless \, \mathcal{O}( |\Cpm| \cdot | \alpha_i|), \\
      [V_M ] = &\,\, 0, \\
      [V_R] = &\,\, \kappaRgrt \, \mathcal{O}( |\Cpm| \cdot | \alpha_i|). 
\end{align*} 
\end{proof}
\begin{lemma}[Case 2. $\Npm \CUp \rightarrow \Cpm$]\label{case:2}
During the strong interaction $ \Npm \CUp \rightarrow \Cpm$, we have
\begin{align}
     [V_L] = & \,\, \kappaLless \cdot | \Npm|  |\CUp|   \mathcal{O}\big(  |v(\Npm) - v(\CUp)| \big), \nonumber \\
     [V_M] = & \,\,    0, \nonumber \\ 
      [V_R] = & \,\, \kappaRgrt  \cdot | \Npm|  |\CUp|   \mathcal{O}\big(  |v(\Npm) - v(\CUp)| \big), \nonumber \\
      [ \myW ] = & \,\,      (  \kappaLless +  \kappaRgrt ) \cdot | \Npm| |\CUp| \mathcal{O}\big(  |v(\Npm) - v(\CUp)| \big).   \label{lem2}      
\end{align} 
\end{lemma}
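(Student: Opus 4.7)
The plan is to split the analysis into the three functionals separately, handling $[V_M]$ by a direct geometric observation and $[V_L], [V_R]$ by sharpening the Glimm interaction estimate of Theorem~\ref{thm:Glimm_interaction_estimates}.

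Immediately before the interaction, the strong waves $\Npm$ and $\CUp$ are adjacent in space with no weak waves in between them, so $V_M(t-) = 0$; after the merger, a single outgoing $\Cpm$ sits at the interaction point and by the convention $z^h(t+) = y^h(t+)$ the middle region is again empty, so $V_M(t+) = 0$ as well. Hence $[V_M] = 0$. The outgoing configuration consists of $\Cpm$ together with weak waves $\{\gamma_j\}_{j \neq i}$ in the other characteristic families; by strict hyperbolicity, those with $j < i$ cross to the left of the strong shock and contribute to $V_L$, while those with $j > i$ cross to the right and contribute to $V_R$. Consequently,
\[
   [V_L] = \sum_{j<i} \kappaLless \, |\gamma_j|,
   \qquad
   [V_R] = \sum_{j>i} \kappaRgrt \, |\gamma_j|,
\]
and the task reduces to estimating $|\gamma_j|$ for each $j \neq i$.

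The key new ingredient is that the outgoing weak waves vanish identically at the exact three-state pattern. Summing the Rankine--Hugoniot relations for $\Npm$ and $\CUp$ yields the algebraic identity
\[
   \bigl[f(u_r) - f(u_\ell)\bigr] - v(\Npm)\,(u_r - u_\ell) = \bigl[v(\CUp) - v(\Npm)\bigr] \, (u_r - u_m),
\]
together with its symmetric counterpart obtained by swapping $v(\Npm)$ and $v(\CUp)$. When $v(\Npm) = v(\CUp)$, the defect vanishes and $u_r \in \HH_i(u_\ell)$ at speed $v(\Npm)$, so that the Riemann problem from $u_\ell$ to $u_r$ admits the single classical shock $\Cpm$ as its full solution and all $\gamma_j$ with $j \neq i$ are zero. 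In general, the first identity shows the defect is of order $\mathcal{O}\bigl(|v(\Npm) - v(\CUp)| \cdot |\CUp|\bigr)$, and the symmetric identity gives $\mathcal{O}\bigl(|v(\Npm) - v(\CUp)| \cdot |\Npm|\bigr)$.

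To turn these defect bounds into estimates on the individual $\gamma_j$, I would expand $u_r - u_\ell = \sum_k \gamma_k \, r_k(u_\ell) + \mathcal{O}(|\boldsymbol{\gamma}|^2)$, substitute into the conservation identity, and project onto the left eigenvectors $l_j$ with $j \neq i$, using strict hyperbolicity (so that $|\lam_j - v(\Npm)| \geq c > 0$) to isolate $\gamma_j$. The main obstacle is that at leading order $u_r - u_m$ is aligned with $r_i(u_m)$ and hence orthogonal to $l_j$ for $j \neq i$, so the leading-order defect projection vanishes; the non-trivial contribution comes from the second-order $(Dr_i \cdot r_i)$ term in the Hugoniot expansion of $\Npm$ and $\CUp$, which supplies the missing factor of $|\Npm|$ or $|\CUp|$ and couples to the kinetic-relation constraint $u_m = \Phiif(u_\ell)$. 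Combining both symmetric identities with this second-order analysis and absorbing quadratic self-interactions using the smallness of $\delta_1$, one obtains $|\gamma_j| \leq C \, |\Npm| \cdot |\CUp| \cdot |v(\Npm) - v(\CUp)|$ for all $j \neq i$; summing these with the weights $\kappaLless, \kappaRgrt$ assembles to the stated bounds on $[V_L], [V_R]$, and $[\myW]$.
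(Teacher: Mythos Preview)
Your proposal is correct and actually goes well beyond what the paper offers. The paper's own proof of this lemma is essentially a two-sentence assertion: it states that the strength of the weak outgoing waves is ``obviously proportional to $|\Npm|\cdot|\CUp|$ but also to $|v(\Npm)-v(\CUp)|$'', observes that these weak waves feed only $V_L$ and $V_R$ with $V_M=0$ before and after, and stops there. Your argument covers exactly the same ground for $[V_M]$ and for the left/right partition of the outgoing $\gamma_j$, but then supplies what the paper omits: the Rankine--Hugoniot identity
\[
   \bigl[f(u_r)-f(u_\ell)\bigr]-v(\Npm)(u_r-u_\ell)=\bigl[v(\CUp)-v(\Npm)\bigr](u_r-u_m)
\]
gives a clean mechanism for why the transversal waves vanish at the exact three-state pattern and hence carry the factor $|v(\Npm)-v(\CUp)|$.

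One small clarification on your second-order step. When you project the defect onto $l_j(u_\ell)$, the extra factor $|\Npm|$ does not come from the $(Dr_i\cdot r_i)$ term in the Hugoniot expansion of $u_r-u_m$ itself, but rather from the mismatch $l_j(u_\ell)\cdot r_i(u_m)=l_j(u_\ell)\cdot\bigl[r_i(u_\ell)+O(|\Npm|)\bigr]=O(|\Npm|)$; this is the first-order variation of $r_i$ along $\Npm$, and it immediately yields the right-hand side of size $O\bigl(|v(\Npm)-v(\CUp)|\cdot|\CUp|\cdot|\Npm|\bigr)$. On the left-hand side you must be a little careful: the naive expansion produces an $O\bigl((|\Npm|+|\CUp|)^2\bigr)$ remainder which does not visibly carry the velocity weight. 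In the regime of the paper this is harmless, since $|\Npm|$ and $|\CUp|$ are of fixed order while $|v(\Npm)-v(\CUp)|$ is the small perturbative quantity, so the three-fold product and the bare velocity difference are equivalent up to constants; alternatively one can invoke the smooth dependence of $\gamma_j$ on the data together with $\gamma_j=0$ at the exact pattern. Either way, your reasoning lands on the same estimate the paper simply asserts.
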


\begin{proof}
Consider the strong $\Npm \CUp \rightarrow \Cpm$ interaction. The strength of the weak outgoing  waves is obviously proportional to
$  | \Npm| \cdot |\CUp|  $ but also to
\begin{align*}
         \big| v(\Npm) - v(\CUp) \big|.
\end{align*}
These weak waves only contribute to $V_L$ and $V_R$ with $V_M = 0$ before and after the $ \Npm \CUp$ interaction.  
\end{proof}
\begin{lemma}[Case 3. $\alpha_i C  \rightarrow {C}'$ or $C \alpha_i \rightarrow {C}'$ ]\label{case:3}
Consider a weak wave $\alpha_i$ of the $i$-th family. The strong shock $C$ can be either a decreasing shock $\Cpm$
or the increasing shock $\CUp$ that appears when a nonclassical shock is present.
If the weak wave crosses the classical strong shock $\Cpm$  from the left, then
\begin{align}
     [V_L] = & \,\, - \kappaL \, | \alpha_i | + \kappaLless \, \mathcal{O}(|\Cpm| \cdot | \alpha_i|),  \nonumber \\
     [V_M] = & \,\,  0,  \nonumber \\ 
      [V_R] = & \,\,  + \kappaRgrt \, \mathcal{O}( |\Cpm| \cdot | \alpha_i|),  \nonumber \\
      [ \myW ] = & \,\,  |\alpha_i | \Big( -\kappaL 
                                    + \big(  \kappaLless +  \kappaRgrt \big) \mathcal{O}(|\Cpm|) \Big).   \label{lem3-a}
\end{align} 
If the weak wave crosses the shock $\Cpm$ from the right, then
\begin{align}
     [V_L] = & \,\, + \kappaLless \, \mathcal{O}(|\Cpm| \cdot | \alpha_i|),  \nonumber \\
     [V_M] = & \,\, 0,  \nonumber \\ 
      [V_R] = & \,\, -  \kappaR \, | \alpha_i | + \kappaRgrt  \, \mathcal{O}(|\Cpm| \cdot | \alpha_i|),  \nonumber \\
      [ \myW ] =  & \,\,  |\alpha_i | \Big( -\kappaR 
               + \big(  \kappaLless +  \kappaRgrt \big) \mathcal{O}(|\Cpm|) \Big).      \label{lem3-b}
\end{align}
If the weak wave crosses the classical strong shock $\CUp$  from the left, then
\begin{align}
     [V_L] = & \,\, 0,   \nonumber \\
     [V_M] = & \,\, - \kappaM \, | \alpha_i | + \kappaMless \, \mathcal{O}(|\CUp| \cdot | \alpha_i|),  \nonumber \\ 
      [V_R] = & \,\, + \kappaRgrt \, \mathcal{O}(|\CUp| \cdot | \alpha_i|),  \nonumber \\
      [ \myW ] = & \,\,  |\alpha_i | \Big( -\kappaM  
                                    + \big(  \kappaMless +  \kappaRgrt \big) \mathcal{O}(|\CUp|) \Big). \label{lem3-c}
\end{align} 
If the weak wave crosses the shock $\CUp$ from the right, then
\begin{align}
     [V_L] = & \,\, 0,  \nonumber \\
     [V_M] = & \,\, + \kappaMless \, \mathcal{O}(|\CUp| \cdot | \alpha_i|),   \nonumber \\
      [V_R] = & \,\, -  \kappaR \, | \alpha_i | + \kappaRgrt \,  \mathcal{O}(|\CUp| \cdot | \alpha_i|),  \nonumber \\
      [ \myW ] =  & \,\,  |\alpha_i | \Big( -\kappaR 
               + \big(  \kappaMless +  \kappaRgrt \big) \mathcal{O}(|\CUp|) \Big).  \label{lem3-d}
\end{align}
\end{lemma}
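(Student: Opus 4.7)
The plan is to apply Theorem \ref{thm:Glimm_interaction_estimates} to each of the four sub-configurations and then carefully bookkeep the wave content of the three open regions $\{x<y^h\}$, $\{y^h<x<z^h\}$, $\{z^h<x\}$ before and after the interaction. By Theorem \ref{thm:solver_nucleation}, the interaction of a weak $i$-wave $\alpha_i$ with a classical $i$-shock $C \in \{\Cpm,\CUp\}$ that does not nucleate produces a single perturbed classical $i$-shock ${C}'$ of the same type, plus weak outgoing waves $\gamma_j$ in every family $j \neq i$. The Glimm estimates give
\begin{equation*}
\sigma({C}') = \sigma(C) + \sigma(\alpha_i) + \mathcal{O}(|C|\cdot|\alpha_i|),
\qquad
|\gamma_j| = \mathcal{O}(|C|\cdot|\alpha_i|) \quad \text{for } j \neq i.
\end{equation*}
Since the functionals $V_L,V_M,V_R$ tally only weak waves lying strictly inside the open regions and since the perturbed large shock ${C}'$ sits at the updated position $y^h$ or $z^h$, the shock itself does not contribute, so the only changes to $[V_L],[V_M],[V_R]$ come from removing $\alpha_i$ and inserting the new $\gamma_j$'s.

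The second ingredient is the location of each outgoing $\gamma_j$ relative to the updated position of ${C}'$. By strict hyperbolicity, $\lambda_j < \lambda_i$ for $j<i$ and $\lambda_j > \lambda_i$ for $j>i$ uniformly on $\Bzero$, so the $j<i$ outgoing waves are slower than ${C}'$ and land to its left, while the $j>i$ ones are faster and land to its right. When $C=\Cpm$ sits at the leftmost large-wave position $y^h$, this places the new $j<i$ contributions in $V_L$ (weighted by $\kappaLless$) and the new $j>i$ contributions in $V_R$ (weighted by $\kappaRgrt$), and $V_M$ is untouched. When $C=\CUp$ sits at $z^h$, the $j<i$ contributions now land in $V_M$ (weighted by $\kappaMless$) and the $j>i$ contributions still in $V_R$, and $V_L$ is untouched.

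Combining these two observations yields the four identities case by case: the incoming $\alpha_i$ is removed from whichever region it initially occupied, contributing $-\kappaL|\alpha_i|$, $-\kappaR|\alpha_i|$, or $-\kappaM|\alpha_i|$ according to the sub-case, and the outgoing $\gamma_j$'s supply the $\mathcal{O}(|C|\cdot|\alpha_i|)$ terms with the coefficients $\kappaLless,\kappaMless,\kappaRgrt$ dictated by the placement rule above. Summing $[V_L]+[V_M]+[V_R]$ then gives the stated expression for $[\myW]$ in each of \eqref{lem3-a}--\eqref{lem3-d}.

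The main obstacle, in my view, is not the arithmetic but rather verifying in each sub-case that the outgoing $i$-family fan truly consists of a single perturbed classical shock ${C}'$, rather than splitting off a nonclassical partner --- i.e.\ that the interaction remains in Case 3 rather than drifting into Case 1. By Lemma \ref{lem:equivalence_norm} the generalized strength is continuous across classical interactions, so this reduces to checking that the perturbed $i$-coordinate of the two outer states still lies within the nucleation interval $(\muiN(u_\ell),\mu_i(u_\ell))$ for $\Cpm$ (and analogously for $\CUp$), which is precisely the role of the smallness hypothesis on the perturbation relative to $\eta := \muis(u_*) - \muiN(u_*)$. Once this is in hand, the bookkeeping of weak waves across the three regions is purely algebraic and yields the four stated formulas.
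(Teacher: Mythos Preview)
Your proposal is correct and takes essentially the same approach as the paper: apply the Glimm interaction estimates of Theorem~\ref{thm:Glimm_interaction_estimates} and bookkeep the weak outgoing $\gamma_j$'s by region and family; the paper's own proof is in fact terser, writing out only the first sub-case and declaring the rest ``relatively straightforward.'' One minor remark: your final paragraph is unnecessary here, since the single-shock output pattern $C'$ is part of the hypothesis defining Case~3 rather than something to be verified within this lemma --- the question of when an interaction lands in Case~3 versus Case~1 is handled separately in the global argument.
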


\begin{proof} 
The proof is relatively straightforward and we treat only the first two cases. When the wave $\alpha_i$ approaches from the left, then 
\begin{align*}
     [V_L] = & \,\, - \kappaL | \alpha_i | + \kappaLless  \mathcal{O}(|\Cpm| \cdot | \alpha_i|),  \\
     [V_M] = & \,\,  0, \\ 
      [V_R] = & \,\,   \kappaRgrt \mathcal{O}(|\Cpm| \cdot  | \alpha_i|).
\end{align*}
\end{proof}
\begin{lemma}[Case 4. $\alpha_i \Npm  \rightarrow {\Npm}' \alpha_i'$]\label{case:4}
Suppose a weak wave $\alpha_i$ of the $i$-th family approaches $\Npm$ from the left 
and that the nonclassical shock ${\Npm}'$ remains after the interaction. 
Then the only two possible interactions are RN and CN-$3$, and in both cases, we have
\begin{align*}
         [V_L] = & \,\, - \kappaL \, | \alpha_i|  + \kappaLless \, \mathcal{O}(|\Npm| \cdot  | \alpha_i|), \\
         [V_M] = & \,\,  + \kappaM \,  | \alpha_i'|  + \big( \kappaM + \kappaMgrt \big) \, \mathcal{O}(|\Npm| \cdot | \alpha_i|), \\
         [V_R] = & \,\, 0, 
\end{align*}
With the constant $\Cff < 1$ provided by property (H4) of $\Phif_i$, we have
\begin{align}\label{lem4}
      [ \myW ] \leq & \,\, |\alpha_i| \Big( -   \kappaL
                                                      + \Cff    \kappaM
                                +  \big(  \kappaM + \kappaLless +  \kappaMgrt  \big) \mathcal{O}(|\Npm|)  \Big).
\end{align}
\end{lemma}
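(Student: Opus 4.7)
The plan is to follow the state-by-state approach used in the previous cases, combined with the key contractivity property (H4) of the kinetic function. Label the incoming states $u_\ell$ (left of $\alpha_i$), $u_m$ (between $\alpha_i$ and $\Npm$), and $u_r$ (right of $\Npm$). The kinetic relation applied to the incoming nonclassical shock gives $u_r=\Phif_i(u_m)$. After the interaction the Riemann solver produces a nonclassical shock $\Npm'$ followed by a weak wave $\alpha_i'$; the kinetic relation again forces the intermediate state to equal $u_m':=\Phif_i(u_\ell)$, while the terminal state $\tilde{u}_r$ satisfies $|\tilde{u}_r-u_r|=\mathcal{O}(|\alpha_i|\cdot|\Npm|)$ by Theorem~\ref{thm:Glimm_interaction_estimates}, which also bounds any waves generated in families $j\neq i$ by $\mathcal{O}(|\alpha_i|\cdot|\Npm|)$. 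Using the classification in \cite{LeFloch-book}, the only interaction types compatible with the persistence of $\Npm'$ are RN and CN-$3$, so both cases can be treated uniformly.

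Next I would tally the three functional changes. The left functional $V_L$ loses $\alpha_i$, contributing $-\kappaL|\alpha_i|$, plus generated cross-family waves that contribute at most $\kappaLless\,\mathcal{O}(|\Npm|\cdot|\alpha_i|)$. The right functional $V_R$ is unchanged since neither $\Npm$ nor $\CUp$ shifts from its relative position and no weak wave crosses them. The middle functional $V_M$ acquires the new weak wave $\alpha_i'$ together with possible generated waves, yielding $\kappaM|\alpha_i'|+(\kappaM+\kappaMgrt)\,\mathcal{O}(|\Npm|\cdot|\alpha_i|)$.

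The crux of the argument is the bound $|\alpha_i'|\leq \Cff\,|\alpha_i|+\mathcal{O}(|\alpha_i|\cdot|\Npm|)$. Since both endpoints of $\alpha_i'$ lie in the region $\mu_i<0$, Definition~\ref{defn:waveLL} gives
\[
  |\alpha_i'|=\bigl|\mu_i(\Phifo(\tilde{u}_r))-\mu_i(\Phifo(u_m'))\bigr|.
\]
Substituting $u_m'=\Phif_i(u_\ell)$ and $\tilde{u}_r=\Phif_i(u_m)+\mathcal{O}(|\alpha_i|\cdot|\Npm|)$, and using Lipschitz continuity of $\Phifo$, the right-hand side equals $\bigl|\mu_i(\Phifo\circ\Phif_i(u_m))-\mu_i(\Phifo\circ\Phif_i(u_\ell))\bigr|$ up to an error of order $\mathcal{O}(|\alpha_i|\cdot|\Npm|)$. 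Property (H4) gives the pointwise contraction $|\mu_i\circ\Phifo\circ\Phif_i(u)|\leq\Cff|\mu_i(u)|$, and, as noted in the remark following (H4), this is equivalent to the uniform Lipschitz contractivity of the iterate $\Phif_i\circ\Phif_i$. Combined with property (H3) on the monotonicity of $\mu_i\circ\Phif_i$ along wave curves and the fact that $\Phifo\circ\Phif_i$ fixes the manifold $\MM_i$ pointwise, one obtains the genuine Lipschitz contraction
\[
  \bigl|\mu_i(\Phifo\circ\Phif_i(u_m))-\mu_i(\Phifo\circ\Phif_i(u_\ell))\bigr|\leq \Cff\,\bigl|\mu_i(u_m)-\mu_i(u_\ell)\bigr|=\Cff\,|\alpha_i|.
\]

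Summing the three contributions and inserting this bound then produces the claimed inequality \eqref{lem4}. The main obstacle is precisely the Lipschitz upgrade carried out in the third paragraph: (H4) is formulated as a pointwise contraction toward $\mu_i=0$, and converting it into a contraction of the difference between two nearby Hugoniot images requires invoking the equivalent formulation via $\Phif_i\circ\Phif_i$ along with (H3); this is the one nontrivial step that distinguishes Case~4 from the previous three cases, where the constant $\Cff$ did not enter the estimates.
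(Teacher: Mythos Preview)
Your proposal follows essentially the same route as the paper: identify the states $u_\ell,u_m,u_r$ with $u_r=\Phif_i(u_m)$ and $u_m'=\Phif_i(u_\ell)$, invoke the interaction estimates to control the secondary waves, compute $[V_L],[V_M],[V_R]$ in the same way, and then bound $|\alpha_i'|$ by unpacking Definition~\ref{defn:waveLL} and applying~(H4). The only difference is that the paper simply writes $|\mu_i(\Phifo\circ\Phif_i(u_m))-\mu_i(\Phifo\circ\Phif_i(u_\ell))| = \Cff|\mu_i(u_m)-\mu_i(u_\ell)|$ citing~(H4) directly, whereas you correctly observe that~(H4) is literally a pointwise bound $|\mu_i(\Phifo\circ\Phif_i(u))|\le\Cff|\mu_i(u)|$ and spend a paragraph upgrading it to a Lipschitz bound via the remark on the contractivity of $\Phif_i\circ\Phif_i$ and~(H3); this extra care is justified and sharpens a step the paper treats informally.
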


\begin{proof} 
We will consider the changes in the total variation with respect to the Definition \ref{defn:waveLL} of wave strength. When the
interaction occurs it generates secondary waves in the families $j \neq i$ of strength
$    \mathcal{O}(|\Npm| \cdot | \alpha_i|)$,
and there is an outgoing wave $\alpha_i'$ to the right of ${\Npm}'$ and of the same type as $\alpha_i$.
This implies that 
\begin{align*}
    | u_\ell - u_\ell' | =   \mathcal{O}(|\Npm| \cdot | \alpha_i|), \quad
    | u_r - u_r' | =  \mathcal{O}(|\Npm| \cdot | \alpha_i|), \quad
    | \Phi^\flat_i(u_m) - u_m' | =   \mathcal{O}(|\Npm| \cdot | \alpha_i|). 
\end{align*}
Recall that only RN and CN-$3$ interactions are admissible and so $\alpha_i$ could be a rarefaction or a shock. 
We always have $[V_R] = 0$ but the calculation of the two other variations is less obvious.
\begin{align*}
         [V_L]   = & \,\, - \kappaL  | \alpha_i | + \kappaLless \mathcal{O}(|\Npm| \cdot | \alpha_i|)   \\                                      
         [V_M]  = & \, \, + \kappaM | \alpha_i' | + \kappaMgrt \mathcal{O}(|\Npm| \cdot | \alpha_i|)  \\
                    = & \,\, + \kappaM \big| \mui( \Phifo( u_r') ) - \mui( \Phifo(u_m') ) \big| 
                                + \kappaMgrt \mathcal{O}(|\Npm| \cdot | \alpha_i|)\\
                    = & \,\, + \kappaM \big| \mui( \Phifo \circ \Phif_i ( u_m) ) - \mui( \Phifo \circ \Phif_i(u_{\ell}) ) \big| 
                         + (\kappaM + \kappaMgrt \big) \mathcal{O}(|\Npm| \cdot | \alpha_i|)\\          
                    = & \,\,  +  \kappaM \Cff \big| \mui( u_m ) - \mui( u_{\ell} ) \big| 
                         + (\kappaM + \kappaMgrt \big) \mathcal{O}(|\Npm| \cdot | \alpha_i|)    
\end{align*}
where we have used $C_i$ of condition (H4).
\end{proof}
\begin{lemma}[Case 5. $\beta_j \Cpm  \rightarrow {\Npm}' {\CUp}'$ or $\Cpm \beta_j \rightarrow {\Npm}' {\CUp}'$ ]\label{case:6}
Consider a weak wave $\beta_j$, with $j \neq i$, which after interacting with $\Cpm$ leads to 
 $\Npm \CUp$. When the weak wave is faster than $\Cpm$, that is $j>i$, then
\begin{align}
     [V_L] = & \,\, - \kappaLgrt \, |\beta_j| + \kappaLless \, \mathcal{O}(|\Cpm| \cdot |\beta_j |), \nonumber \\
     [V_M] = & \,\,     0, \nonumber \\ 
      [V_R] = & \,\, + \kappaRgrt \, |\beta_j|  + \kappaRgrt \,  \mathcal{O}(|\Cpm| \cdot | \beta_j |), \nonumber \\
    [ \myW ] = & \,\,   |\beta_j  | \Big( - \kappaLgrt  +  \kappaRgrt  
                      +  \big(  \kappaLless +  \kappaRgrt \big) \mathcal{O}(|\Cpm|) \Big).  \label{lem6-a}
\end{align} 
On the other hand, when $j<i$ we have
\begin{align}
     [V_L] = & \,\, + \kappaLless \, |\beta_j|  + \kappaLless \, \mathcal{O}(|\Cpm| \cdot | \beta_j |), \nonumber \\
     [V_M] = & \,\,     0, \nonumber \\ 
      [V_R] = & \,\, - \kappaRless \, |\beta_j|  + \kappaRgrt \, \mathcal{O}(|\Cpm| \cdot | \beta_j |), \nonumber \\
  [ \myW ] = & \,\,  |\beta_j  | \Big(  -  \kappaRless   +  \kappaLless    
                      +  \big(  \kappaLless +  \kappaRgrt \big) \mathcal{O}(|\Cpm|) \Big).      \label{lem6-b}
\end{align} 
\end{lemma}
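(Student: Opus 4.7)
The plan is to mirror the proofs of Lemmas~\ref{case:1}--\ref{case:4}: apply the Glimm interaction estimate of Theorem~\ref{thm:Glimm_interaction_estimates} and then read off the zone contributions. Strict hyperbolicity forces the two sub-cases to correspond to the two incoming configurations stated: when $j>i$ the weak wave is faster than $\Cpm$ so the approaching configuration must be $\beta_j\,\Cpm$, while when $j<i$ the wave is slower and the approaching configuration must be $\Cpm\,\beta_j$. In either case the outgoing $i$-fan is prescribed to be the pair $\Npm'\CUp'$, which is exactly the splitting treated in Lemma~\ref{case:1}; the novelty here is only in tracking an approaching wave of a different family.

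By Theorem~\ref{thm:Glimm_interaction_estimates}, up to a quadratic error $\mathcal{O}(|\Cpm|\,|\beta_j|)$ the outgoing wave strengths are the sum of the incoming ones family by family. Thus the $i$-contribution gives the pair $\Npm'\,\CUp'$ carrying the strength of $\Cpm$, the $j$-contribution gives a single wave $\beta_j'$ of the same type as $\beta_j$ with $|\beta_j'|=|\beta_j|+\mathcal{O}(|\Cpm|\,|\beta_j|)$, and in every other family $k\notin\{i,j\}$ there is a single weak outgoing wave of size $\mathcal{O}(|\Cpm|\,|\beta_j|)$. Lemma~\ref{lem:equivalence_norm} ensures that for these small waves of families $k\neq i$ the generalized strength of Definition~\ref{defn:waveLL} is equivalent to the standard one, so no refinement of Theorem~\ref{thm:Glimm_interaction_estimates} is required.

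The assignment of each outgoing wave to $V_L$, $V_M$, or $V_R$ is then immediate from the characteristic ordering: weak waves of family $k<i$ are slower than the strong $i$-waves and are placed to the left of $\Npm'$, waves of family $k>i$ are placed to the right of $\CUp'$, and no weak wave of family $k\neq i$ is ever produced between $y^h(t)$ and $z^h(t)$, which immediately yields $[V_M]=0$ in both sub-cases. For $j>i$ the incoming $\beta_j$ previously contributed $\kappaLgrt|\beta_j|$ to $V_L$ and the outgoing $\beta_j'$ contributes $\kappaRgrt|\beta_j|$ to $V_R$, while the secondary waves of families $k\neq i,j$ add $\kappaLless\,\mathcal{O}(|\Cpm|\,|\beta_j|)$ to $V_L$ and $\kappaRgrt\,\mathcal{O}(|\Cpm|\,|\beta_j|)$ to $V_R$; summing produces \eqref{lem6-a}. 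The sub-case $j<i$ is the mirror image, with $\beta_j$ removed from $V_R$ with weight $\kappaRless$ and $\beta_j'$ added to $V_L$ with weight $\kappaLless$, which produces \eqref{lem6-b}.

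The main obstacle is purely combinatorial bookkeeping: one must verify that the wave \emph{type} of $\beta_j'$ matches that of $\beta_j$ so that its signed generalized strength is unambiguously defined, and that no secondary $i$-wave is generated between the strong waves to spoil $[V_M]=0$. Both points follow from the uniqueness of the Taylor expansion underlying Theorem~\ref{thm:Glimm_interaction_estimates} together with the structure of the nonclassical Riemann solver in Theorem~\ref{thm:solver_nucleation}, so no genuinely new analytical ingredient beyond the earlier cases is needed.
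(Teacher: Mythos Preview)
Your proposal is correct and follows essentially the same approach as the paper: both proofs invoke the Glimm interaction estimate to see that the outgoing $j$-wave has strength $|\beta_j|+\mathcal{O}(|\Cpm|\,|\beta_j|)$ while the secondary waves in families $k\neq i,j$ are $\mathcal{O}(|\Cpm|\,|\beta_j|)$, and then assign each outgoing wave to the appropriate zone by characteristic ordering. The paper's own proof is in fact terser than yours---it simply writes down the three variations in each sub-case without the explicit justification you supply---so your more detailed bookkeeping (identifying which side $\beta_j$ approaches from, why $[V_M]=0$, and why no weak $i$-wave is created between the strong pair) is a faithful expansion of the same argument.
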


\begin{proof}
We begin with the case where $\beta_j$ is faster than $\Cpm$. The weak wave $\beta_j$ belongs to
the family $j$, hence up to a quadratic error, the strength of
the outgoing wave $j$-wave is the same strength as that of the incoming wave $\beta_j$. 
We begin by computing the change when the wave $\beta_j$ belongs to a family $j>i$. 
\begin{align*}
     [V_L] = & \,\,  - \kappaLgrt |\beta_j|  + \kappaLless  \mathcal{O}(|\Cpm| \cdot | \beta_j |) \\
     [V_M] = & \,\,     0  \\ 
      [V_R] = & \,\, + \kappaRgrt |\beta_j|  + \kappaRgrt  \mathcal{O}(|\Cpm| \cdot | \beta_j |)
\end{align*}

In the second case, that is when $j < i$, then the wave $\beta_j$ approaches from the right.
We now remark that
\begin{align*}
     [V_L] = & \,\, + \kappaLless |\beta_j|  + \kappaLless  \mathcal{O}(|\Cpm| \cdot | \beta_j |), \\
     [V_M] = & \,\,     0, \\ 
      [V_R] = & \,\, - \kappaRless |\beta_j|  + \kappaRgrt \mathcal{O}(|\Cpm| \cdot | \beta_j |).
\end{align*}
\end{proof}
\begin{lemma}[Case 6. $\beta_j \Npm \rightarrow \Npm' \beta_j'$ or $\Npm \beta_j \rightarrow  \beta_j' \Npm' $]\label{case:7}
Suppose a weak wave $\beta_j$ from the $j$-th family, $j>i$, crosses $\Npm$ from the left
and generates a small wave $\beta_i'$ in the $i$-th family, then
\begin{align}
     [V_L] = & \,\, - \kappaLgrt \, | \beta_j | + \kappaLless \, \mathcal{O}( | \Npm| \cdot |\beta_j |  ), \nonumber \\
     [V_M] = & \,\, +  \kappaMgrt \, | \beta_j | +  (\kappaM+ \kappaMgrt) \, \mathcal{O}(| \Npm| \cdot |\beta_j | ),  \nonumber  \\ 
     [V_R] = & \,\, 0,  \nonumber  \\
      [ \myW ] = & \,\,  |\beta_j| \big(  - \kappaLgrt + \kappaMgrt + (  \kappaM + \kappaLless +  \kappaMgrt ) \mathcal{O}(| \Npm| ) \big).     \label{lem7-a}
\end{align} 
Suppose a weak wave $\beta_j$  from the $j$-th family, $j < i$, crosses $\Npm$ from the right. Then
\begin{align}
     [V_L] = & \,\, + \kappaLless \, | \beta_j | + \kappaLless \, \mathcal{O}( | \Npm| \cdot |\beta_j |),  \nonumber  \\
     [V_M] = & \,\, - \kappaMless \, | \beta_j | + (\kappaM+  \kappaMgrt) \, \mathcal{O}(| \Npm| \cdot |\beta_j |),  \nonumber  \\ 
     [V_R] = & \,\, 0,  \nonumber  \\
    [ \myW ] = & \,\,  |\beta_j| \big(   - \kappaMless + \kappaLless + (  \kappaM + \kappaLless +  \kappaMgrt ) \mathcal{O}(| \Npm| ) \big).  \label{lem7-b}   
\end{align} 
\end{lemma}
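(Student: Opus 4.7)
The plan is to prove the estimates by a direct application of Theorem~\ref{thm:Glimm_interaction_estimates} combined with a careful bookkeeping of the regions (left of $\Npm'$, between $\Npm'$ and $\CUp$, or right of $\CUp$) in which each outgoing wave resides. The work is entirely analogous in spirit to the proof of Case~4 already carried out, with the extra feature that $\beta_j$ belongs to a family different from that of the nonclassical shock.

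First, I would apply the interaction estimates of Theorem~\ref{thm:Glimm_interaction_estimates} to the pair of incoming waves $(\beta_j,\Npm)$ in the case $j>i$, and to $(\Npm,\beta_j)$ in the case $j<i$. In both cases the outgoing Riemann fan consists of: (a)~a perturbed nonclassical shock $\Npm'$ of the $i$-th family, whose existence follows from the stability of nonclassical shocks under perturbation (kinetic relation \eqref{42} plus hypothesis~(H2)); (b)~an outgoing $j$-wave $\beta_j'$ of the same type (shock or rarefaction) as $\beta_j$, with $|\beta_j'|=|\beta_j|+\mathcal{O}(|\Npm|\cdot|\beta_j|)$ by the equivalence of the generalized strength (Lemma~\ref{lem:equivalence_norm}); and (c)~secondary waves in each of the remaining families $k\neq i,j$, of total strength $\mathcal{O}(|\Npm|\cdot|\beta_j|)$, including in particular a possible small wave $\beta_i'$ of family $i$.

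Second, I would localize each outgoing wave. In the case $j>i$, before the interaction $\beta_j$ lies to the left of $\Npm$ (since it is faster and was overtaking), hence contributes to $V_L$ with weight $\kappaLgrt$; after the interaction $\beta_j'$ lies to the right of $\Npm'$ but still to the left of $\CUp$, so it sits in $V_M$ with weight $\kappaMgrt$. The strict ordering of the characteristic speeds $\lambda_1<\dots<\lambda_N$ forces: secondary waves of families $k<i$ to be emitted to the left of $\Npm'$ (weight $\kappaLless$ in $V_L$); secondary waves of families $k>i$, $k\neq j$, together with the possible secondary $i$-wave $\beta_i'$, to be emitted to the right of $\Npm'$ (weights $\kappaMgrt$ and $\kappaM$ respectively in $V_M$, since they have not yet reached $\CUp$). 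This yields
\begin{align*}
 [V_L] &= -\kappaLgrt\,|\beta_j| + \kappaLless\,\mathcal{O}(|\Npm|\cdot|\beta_j|),\\
 [V_M] &= +\kappaMgrt\,|\beta_j| + (\kappaM+\kappaMgrt)\,\mathcal{O}(|\Npm|\cdot|\beta_j|),\\
 [V_R] &= 0,
\end{align*}
and \eqref{lem7-a} follows upon adding the three lines. The case $j<i$ is completely symmetric: now $\beta_j$ sits in $V_M$ before the interaction (weight $\kappaMless$) and $\beta_j'$ sits in $V_L$ after (weight $\kappaLless$), while the secondary waves again split according to their family index, producing \eqref{lem7-b}.

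The only genuine obstacle is the localization step, i.e.\ verifying that at the instant of the interaction the secondary waves of families $k>i$ (in the case $j>i$) have not yet crossed $\CUp$ so that they truly belong to $V_M$ and not to $V_R$. This is immediate because all outgoing waves emerge from a single space-time point, so $V_R$ is unaffected and $[V_R]=0$; any subsequent crossing of $\CUp$ by these secondary waves constitutes a separate interaction of Case~3 or Case~7 type and is accounted for elsewhere in the functional. The remainder of the argument is a purely algebraic summation, exactly as in the proof of Case~4, with $C^\flat$ of hypothesis~(H4) playing no role here since the large nonclassical wave $\Npm$ only appears through its strength $|\Npm|$ in the quadratic error, not through a contractivity factor.
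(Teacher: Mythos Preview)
Your proof is correct and follows precisely the pattern the paper establishes in the proofs of Cases~4 and~5; indeed, the paper states this lemma without proof, leaving it implicit that the same bookkeeping argument applies. Your localization of the outgoing waves (the transmitted $\beta_j'$ into $V_M$ or $V_L$, the secondary $k<i$ waves into $V_L$, and the secondary $i$- and $k>i$-waves into $V_M$) is exactly what the proofs of Lemmas~\ref{case:4} and~\ref{case:6} do, and your observation that $[V_R]=0$ because all outgoing waves emerge from a single point strictly to the left of $\CUp$ is the correct justification.
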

\begin{lemma}[Case 7. $\beta_j C \rightarrow C' \beta_j'$ or $C \beta_j \rightarrow \beta_j' C' $]        \label{case:8}
Consider a weak wave $\beta_j$ belonging to the family $j>i$. The strong shock $C$
can be either decreasing $\Cpm$ or increasing $\CUp$. Suppose that $\beta_j$ crosses $\Cpm$ from the left. Then we have the estimates 
\begin{align}
     [V_L] = & \,\, - \kappaLgrt \, | \beta_j | + \kappaLless \, \mathcal{O}( | \Cpm| \cdot |\beta_j |  ), \nonumber \\
     [V_M ] = & \,\, 0,\nonumber   \\
     [V_R] = & \,\, + \kappaRgrt \, | \beta_j | +  \kappaRgrt \, \mathcal{O}(| \Cpm| \cdot |\beta_j | ),  \nonumber  \\
    [ \myW ] = & \,\,  |\beta_j| \big(  -  \kappaLgrt + \kappaRgrt + (  \kappaLless +  \kappaRgrt ) \mathcal{O}(| \Cpm| ) \big).     \label{lem8-a} 
\end{align} 
Suppose a weak wave $\beta_j$  from the $j$-th family, $j < i$, crosses $\Cpm$ from the right. Then
\begin{align}
     [V_L] = & \,\, + \kappaLless \, | \beta_j | + \kappaLless \, \mathcal{O}( | \Cpm| \cdot |\beta_j |  ), \nonumber  \\
     [V_M] = & \,\, 0, \nonumber  \\ 
     [V_R] = & \,\, - \kappaRless \, | \beta_j | +  \kappaRgrt \, \mathcal{O}(| \Cpm| \cdot |\beta_j | ), \nonumber  \\
[ \myW ] = & \,\,  |\beta_j| \big(  - \kappaRless + \kappaLless  + (  \kappaLless +  \kappaRgrt ) \mathcal{O}(| \Cpm| ) \big).    \label{lem8-b} 
\end{align} 
Suppose that $\beta_j$, with $j > i$, crosses $\CUp$ from the left, then we have the estimates
\begin{align}
     [V_L] = & \,\, 0, \nonumber  \\
     [V_M] = & \,\, - \kappaMgrt \, | \beta_j | + \kappaMless \, \mathcal{O}( | \CUp| \cdot|\beta_j |  ), \nonumber  \\
     [V_R] = & \,\, + \kappaRgrt \, | \beta_j | +  \kappaRgrt \, \mathcal{O}(| \CUp| \cdot |\beta_j | ), \nonumber  \\ 
    [ \myW ] = & \,\,  |\beta_j| \big(  -  \kappaMgrt + \kappaRgrt + (  \kappaMless +  \kappaRgrt ) \mathcal{O}(| \CUp| ) \big).   \label{lem9-a}
\end{align} 
Consider a weak wave $\beta_j$ belonging to the family $j<i$ and crossing $\CUp$ from the right.
Then the interactions are such that
\begin{align}
     [V_L] = & \,\, 0, \nonumber  \\
     [V_M] = & \,\, + \kappaMless \, | \beta_j | + \kappaMless \, \mathcal{O}( | \CUp| \cdot |\beta_j |  ), \nonumber  \\
     [V_R] = & \,\, - \kappaRless \, | \beta_j | +  \kappaRgrt \, \mathcal{O}(| \CUp| \cdot |\beta_j | ),  \nonumber  \\
      [ \myW ] = & \,\,  |\beta_j| \big( - \kappaRless + \kappaMless  + (  \kappaMless +  \kappaRgrt ) \mathcal{O}(| \CUp| ) \big).  \label{lem9-b}
\end{align} 
\end{lemma}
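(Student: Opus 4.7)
The plan is to follow the template used for Lemmas~\ref{case:1}--\ref{case:7}, i.e.\ combine Theorem~\ref{thm:Glimm_interaction_estimates} for the strengths of the outgoing waves with a purely regional accounting for the contributions to $V_L, V_M, V_R$. The statement contains four sub-cases indexed by whether $\beta_j$ approaches $\Cpm$ or $\CUp$ and whether $j>i$ (faster, approaches from the left) or $j<i$ (slower, approaches from the right). In each sub-case the outgoing configuration is of the same type as the incoming one: a classical strong shock $C'$ of the same orientation as $C$ together with an outgoing weak wave $\beta_j'$ of the same family and sign as $\beta_j$, plus secondary waves in the remaining families.

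First, I invoke Theorem~\ref{thm:Glimm_interaction_estimates} to write
\[
|\beta_j'| \,=\, |\beta_j| + \mathcal{O}\big(|C|\cdot|\beta_j|\big), \qquad |C'| \,=\, |C| + \mathcal{O}\big(|C|\cdot|\beta_j|\big),
\]
and to conclude that every secondary wave generated in a family $k\notin\{i,j\}$ has strength $\mathcal{O}(|C|\cdot|\beta_j|)$. Because the pattern we track is either a lone $\Cpm$ or the nonclassical/classical couple $N\CUp$, and because the total strength of the incoming small waves is controlled by the nucleation threshold $\eta$ in \eqref{3.5}, no nucleation is triggered by this interaction: the boundary positions $y^h(t)\le z^h(t)$ are displaced only by the crossing of $\beta_j$, and the strong shock $C$ preserves its type. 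This is the same stability input used in the earlier lemmas.

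Next, I perform the regional bookkeeping. For sub-case (a), $\beta_j$ starts in the left region and ends in the right region of the lone $\Cpm$: hence $V_L$ loses $\kappaLgrt|\beta_j|$ and $V_R$ gains $\kappaRgrt|\beta_j'|$, while the secondary waves are distributed on both sides and produce $\kappaLless\mathcal{O}(|C|\cdot|\beta_j|)$ and $\kappaRgrt\mathcal{O}(|C|\cdot|\beta_j|)$ corrections respectively; $V_M=0$ since no middle region is present in the $\Cpm$-only configuration. Sub-case (b) is completely symmetric under $L\leftrightarrow R$ and with $\kappaLgrt,\kappaRgrt$ replaced by $\kappaLless,\kappaRless$. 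Sub-cases (c) and (d) are the analogues for $\CUp$, the right member of the $N\CUp$ couple; now $V_L$ is unaffected (the boundary between $L$ and $M$ is the stationary $N$), and the transfer is between the middle and right regions, producing the coefficients $\kappaMgrt,\kappaRgrt$ in (c) and $\kappaMless,\kappaRless$ in (d). Summing the three contributions yields each of the stated formulas \eqref{lem8-a}--\eqref{lem9-b}.

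The one subtle bookkeeping point—the same one that arose implicitly in Lemmas~\ref{case:6} and~\ref{case:7}—is the choice of which single $\kappa$ to display in front of the $\mathcal{O}(|C|\cdot|\beta_j|)$ terms. These secondary waves may a priori belong to any family $k\neq i,j$ and therefore may contribute to $V_L$ (resp.\ $V_R$) with coefficient $\kappaLless$ or $\kappaLgrt$ (resp.\ $\kappaRless$ or $\kappaRgrt$). The stated estimates are obtained by absorbing all such contributions into the $\mathcal{O}$ symbol after multiplying by a common upper bound; the particular choice of $\kappa$ displayed is the one that will make the final sign analysis in Section~\ref{sec:globalestimates} cleanest. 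The main ``obstacle''—verifying that the classical shock retains its type under this class of perturbations, so that the region decomposition is consistent before and after—was already guaranteed by the nucleation condition and the smallness assumption on $TV(v_0)$ discussed after~\eqref{3.5}; no new work is needed here.
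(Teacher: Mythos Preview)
Your proposal is correct and follows essentially the same template the paper uses for the analogous Lemmas~\ref{case:1}--\ref{case:6}; in fact the paper omits the proof of this lemma entirely as routine, so your write-up simply fills in what the authors left implicit. One small imprecision: the non-splitting of $C$ under this interaction is part of the \emph{hypothesis} of Case~7 (that is how Case~7 is distinguished from Case~5), not a consequence of the nucleation threshold~$\eta$, so your paragraph invoking~\eqref{3.5} to ``guarantee'' the shock retains its type is unnecessary here.
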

\subsection{Interaction potential}
\label{sec:quad}

We propose to study the following quadratic interaction potential. It will involve the usual product of the strength of the waves weighted 
by the the positive part of the  difference in  wave speeds. Weighted functionals were first introduced and studied by Liu \cite{Liu-monograph} and later by Iguchi and LeFloch \cite{IL}, as well as Bianchini  and Bressan \cite{BianchiniBressan}.   
As is usually the case, the interaction potential  is defined only for front-tracking
approximations, with waves localized at point $x$, in which case, we may speak of its
strength $\sigma_x$ and of its family $i_x$,
 \begin{align}\label{decompo} 
           Q\big( u(t) \big) 
               :=  \sum_{ \substack{\text{waves at $x<y$ }  \\ \text{approaching } \\ \text{family $i_x\neq i_y \neq i$ }  } } \big| \sigma_x \cdot \sigma_y \big|
                + \sum_{ \substack{ \text{weak approaching waves $x<y$} \\  \text{$i_x = i$ and/or} \\ \text{$i_y=i$}} } |v_x-v_y|^+ \cdot \big| \sigma_x \cdot \sigma_y \big|. 
\end{align}
The second term includes potential interactions involving both weak and strong i-waves.

This section will require a measure of the strength of the perturbation as a function of time, but this
is not immediately an obvious quantity to define since the strong waves travel in time. 
We define the strength of the perturbation at some time $t$ to be 
\begin{equation}     \label{defn:pertubation_wrt_t}
            \epsilon(t) := \sum_{\substack{\text{weak waves $\beta$} \\ \text{in $u(\cdot,t)$}}} |\beta|.
\end{equation}
Given the position of the waves $x^h(t), y^h(t)$ at some time $t$, we define the unperturbed waves as
$$
     \bar{u} (x,t) = \begin{cases}
                                  u_*,            & \text{ $x < x^h(t)$,} \\
                                  \Phifo(u_*), & \text{ $x^h(t) \leq x < y^h(t)$,} \\
                                  u_*^n,        & \text{ $y^h(t) \leq x$.}
                           \end{cases}      
$$
It is tempting to state that the strength \eqref{defn:pertubation_wrt_t} is equivalent to 
$$
     \epsilon(t) = \operatorname{TV}\big( u(\cdot, t) - \bar{u}(\cdot, t) \big),
$$
but if $u(\cdot, t) - \bar{u}(\cdot, t)$ takes non-zero values at $x^h(t)$ or $y^h(t)$, then the 
total variation of the difference may include the strength of the small perturbation of the strong waves
away from the classical-nonclassical three state pattern. Throughout the analysis below, 
we will study only the strength of the weak waves. If it is proven that $\epsilon(t)$ remains small for the weak waves,
then the strong waves will remain small perturbations of the strong waves $\bar{u}$.

First, we will study here interactions involving waves from the $i$-th family.
We distinguish between several cases of interactions. For the functional 
$Q$, we extend the decomposition \eqref{decompo} to also treat the change in $Q$.
In fact, in certain cases $[Q]$ will not be negative but $[\myW]$ will contribute
negative first order terms to control $[Q]$, and so it is important to accurately measure the change
in $Q$. We will introduce a decomposition of the terms appearing in the variation of $Q$ during interactions.
The analysis will study each term separately before accounting for the total change.

If an interaction occurs among weak waves of families different than $i$, the concave-convex family, then 
the change in $Q$ is quadratic in the strength of the waves interacting and the usual argument
of Glimm applies and shows that for some small total variation $\epsilon$ and a large constant $K$, then
$\myW + K Q$ is decreasing. Henceforth, we will focus exclusively on the second sum in
the interaction potential $Q$.

If an interaction occurs which involves at least one wave from the $i$-th family, then the change
in $Q$ involves (1) quadratic terms in the strength of the interacting waves weighted by their
difference in wave speeds and (2), the sum of the changes in terms $|v_x - v_y|^+ \sigma_x \sigma_y$
where one of them was involved in the interaction but the other wave was not.
More concretely, consider two incoming waves $C_{\ell}, C_r$, respectively the left and right incoming waves,
which generate $C_{\ell}', C_r'$, respectively the left and right outgoing waves.
Then, abusing our notation, we write
\begin{align*}
   \big[ Q(u(\cdot,t)) \big] = [Q_1] + [Q_2],
\end{align*}
where
\begin{align*}
    [Q_1] =  \,\,  \big| v(C_{\ell}') - v(C_{r}') \big|^+ |C_{\ell}'| |C_r'| -  \big| v(C_{\ell}) - v(C_{r}) \big|^+ |C_{\ell}| |C_r|,
\end{align*}
and
\begin{align*}    
    [Q_2] = & \,\, \sum_{ \substack{\text{weak  $\beta_j$ at $y$ }  \\ j \geq i \text{ and } y < x   } } 
                  \Big\{     \big| v(\beta_j) - v(C_{\ell}') \big|^+ |C_{\ell}'| |\beta_j|
                                             + \big| v(\beta_j) - v(C_{r}') \big|^+ |C_{r}'| |\beta_j| \\
                          & \, \phantom{\sum_{ \substack{\text{weak  $\beta_j$ at $y$ }  \\ j \geq i \text{ and } y < x   } } }   
                                             - \big| v(\beta_j) - v(C_{\ell}) \big|^+ |C_{\ell}| |\beta_j|
                                             - \big| v(\beta_j) - v(C_{r}) \big|^+ |C_{r}|  |\beta_j| \Big\} \\
            & \,\, + \sum_{ \substack{\text{weak  $\beta_j$ at $y$ }  \\ j \leq i \text{ and } x < y   } } 
                  \Big\{     \big| v(C_{\ell}')- v(\beta_j)  \big|^+ |C_{\ell}'| |\beta_j|
                                             + \big| v(C_{r}')  - v(\beta_j) \big|^+ |C_{r}'| |\beta_j| \\
                          & \, \phantom{\sum_{ \substack{\text{weak  $\beta_j$ at $y$ }  \\ j \geq i \text{ and } y < x   } } }   
                                             - \big|  v(C_{\ell}) - v(\beta_j)  \big|^+ |C_{\ell}| |\beta_j|
                                             - \big|  v(C_{r})  - v(\beta_j)\big|^+ |C_{r}|  |\beta_j| \Big\}.                              
\end{align*}
The analysis of $[Q]$ will demonstrate that $[Q_1]$ is always negative and that if the perturbation of the solution $u$
at some time $t$ away from the unperturbed state $\bar{u}$ satisfies an a priori bound with respect to the strength of
the strong waves, then the potential increase in $[Q_2]$ can be compensated by $[Q_1]$ and result in the sum
$[Q]$ being negative by a known negative quantity.

Among the two variations above, there are generalities that can be said about $[ Q_2]$.
In most cases, the variation in $Q_2$ will be caused by the interaction of two incoming waves and
two outgoing waves, and for the sake of argument, we will consider only
weak waves $\beta$ to the left of the interaction. Then for every such weak wave $\beta$, there
is a contribution to $[Q_2]$ of the form
\begin{align*}
        [Q_2]_{\beta} := & \,    |\beta| \Big(  \big| v(\beta) - v(C_{\ell}') \big|^+ |C_{\ell}'| 
                                             + \big| v(\beta) - v(C_{r}') \big|^+ |C_{r}'|  \\
                                  & \, \phantom{|\beta| \Big(}          - \big| v(\beta) - v(C_{\ell}) \big|^+ |C_{\ell}| 
                                             - \big| v(\beta) - v(C_{r}) \big|^+ |C_{r}| \Big).
\end{align*}
As a function of $v(\beta)$, this expression is continuous and piecewise linear with 
four points of discontinuity in the derivative. Clearly, if $v(\beta) < m_v := \min v(C)$
then all the weights vanish and $[Q_2]_{\beta} =0$.
We claim that over the unbounded domain
$ [M_v, \infty[$, with $M_v = \max v(C)$, 
the variation $[Q_2]_{\beta}$ is decreasing and therefore the maximum value
is taken at $v(\beta)=M_v$. This follows
from the fact that during interactions between waves of the family $i$, 
$$
        |C_{\ell}| + |C_r| - |C_{\ell}'| - |C_r'| \leq 0, 
$$
with equality only when all waves are shocks. Each weight can be written as
$$
    \big| v(\beta) - v(C) \big|^+ = | v(\beta) - M_v | + | M_v - v(C) | 
$$
and therefore we have
\begin{align*}
             [Q_2]_{\beta} = & \, |\beta| | v(\beta) - M_v |   \Big(    |C_{\ell}'| + |C_r'| - |C_{\ell}| -|C_r|  \Big) \\
                         & \,   +  |\beta|  \Big( 
                              \Delta v_1 |C_{\ell}'| 
                                             +  \Delta v_2 |C_{r}'| 
                                             - \Delta v_3 |C_{\ell}| 
                                             -  \Delta v_4 |C_{r}|  \Big).
\end{align*}
Each of the new weights $\Delta v_i$ is constant with respect to $v(\beta)$ but the first term
is negative. Hence the maximum of 
$[Q_2]_{\beta}$ must occur at one of the three largest speeds where the derivative is
discontinuous. 
When the weak wave $\beta$ is approaching from the right, similar arguments
show that the maximum value will occur when $v(\beta)$ coincides with one
of the the three lowest speeds in the interaction.  
Below, when studying $[Q_2]_{\beta}$ it will be easy to infer
the speed $v(\beta)$ for which the value will be largest.

\begin{lemma}[Case 1. $\alpha_i \Cpm  \rightarrow N \CUp$ or $\Cpm \alpha_i \rightarrow N \CUp$] \label{Qcase:1}
Consider at some time $t$ a weak wave $\alpha_i$ interacting with $\Cpm$ to generate a nonclassical and a  classical shock pair
${\Npm}' {\CUp}'$. If the perturbation $\epsilon(t)$  satisfies an a priori bound 
\be     \label{qcase1:apriori}
          \epsilon(t) \leq \mathcal{O}(|\Cpm|),
\ee
then 
$$
      [Q] \leq - \big| \mathcal{O}(|\alpha_i|\cdot|\Cpm|^2) \big|.
$$      
\end{lemma}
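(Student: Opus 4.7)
The plan is to decompose the change $[Q] = [Q_1] + [Q_2]$ using the decomposition introduced above and treat the two contributions separately. The direct contribution $[Q_1]$, coming from the interacting waves $\alpha_i,\Cpm$ before and $\Npm',\CUp'$ after, will yield the main negative term. The indirect contribution $[Q_2]$, collected from all weak background waves, can only perturb the estimate if many such waves combine, and the a priori bound $\epsilon(t) \leq \mathcal{O}(|\Cpm|)$ is precisely what keeps $[Q_2]$ below the dominant negative part.

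For $[Q_1]$, I would first invoke Theorem \ref{thm:Glimm_interaction_estimates} together with Lemma \ref{lem:equivalence_norm} to write
\[
\sigma(\Npm') + \sigma(\CUp') \, = \, \sigma(\alpha_i) + \sigma(\Cpm) + \mathcal{O}\big((|\alpha_i|+|\Cpm|)^2\big),
\]
and deduce $|\Npm'|,|\CUp'| = \Theta(|\Cpm|)$. For the wave-speed weights, the defining identity \eqref{3.1} of the three-state pattern is crucial: since $\lamb_i(u_*,u_*^\flat) = \lamb_i(u_*^\flat,u_*^\sharp) = \lamb_*$ in the unperturbed configuration, a first-order expansion of the outgoing shock speeds about a perturbation of the endpoints by $\mathcal{O}(|\alpha_i|)$ yields $|v(\Npm') - v(\CUp')| = \mathcal{O}(|\alpha_i|)$. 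On the incoming side, $|v(\alpha_i) - v(\Cpm)|$ is bounded below by the Lax gap of the strong shock, namely by $\lam_i(u_*) - \lamb_*$ when $\alpha_i$ lies to the left of $\Cpm$ and by $\lamb_* - \lam_i(u_*^\sharp)$ when it lies to the right, which is a strictly positive constant depending only on the reference pattern. Combining these facts, $[Q_1]_{\text{before}}$ dominates $[Q_1]_{\text{after}}$ and one obtains $[Q_1] \leq -c_0 |\alpha_i||\Cpm|$ for a constant $c_0 > 0$ and sufficiently small $|\Cpm|$.

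For $[Q_2]$, I would proceed wave-by-wave: for each weak background wave $\beta$,
\begin{align*}
[Q_2]_\beta \, = \, |\beta| \Big\{ \, & |v(\beta) - v(\Npm')|^+ |\Npm'| + |v(\beta) - v(\CUp')|^+ |\CUp'| \\
& - |v(\beta) - v(\alpha_i)|^+ |\alpha_i| - |v(\beta) - v(\Cpm)|^+ |\Cpm| \, \Big\}.
\end{align*}
Using the strength identity $|\Npm'|+|\CUp'| = |\alpha_i|+|\Cpm| + \mathcal{O}\big((|\alpha_i|+|\Cpm|)^2\big)$ together with the speed proximity $v(\Npm'),v(\CUp') = v(\Cpm)+\mathcal{O}(|\alpha_i|)$, the leading terms in $[Q_2]_\beta$ telescope, leaving $[Q_2]_\beta = |\beta|\cdot\mathcal{O}(|\alpha_i|\cdot|\Cpm|)$. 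Summing over the background and invoking $\epsilon(t) = \mathcal{O}(|\Cpm|)$ yields $[Q_2] = \mathcal{O}(|\alpha_i||\Cpm|^2)$, with a constant proportional to the one in the a priori bound. Therefore $[Q] \leq -c_0|\alpha_i||\Cpm| + \mathcal{O}(|\alpha_i||\Cpm|^2) \leq -c_1|\alpha_i||\Cpm|^2$ for $|\Cpm|$ sufficiently small, which is the claim.

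The main obstacle is justifying the telescoping in $[Q_2]_\beta$ down to order $|\alpha_i|\cdot|\Cpm|$ per background wave, which is strictly better than the naive bound $|\beta|\cdot\mathcal{O}(|\alpha_i|)$. This requires exploiting the Rankine--Hugoniot identity $v(u_-,u_+)(u_+ - u_-) = f(u_+) - f(u_-)$ across the splitting, which gives a linear constraint coupling the speed perturbations of $\Npm'$, $\CUp'$ and $\Cpm$; combined with \eqref{3.1} this annihilates the zeroth-order terms in $[Q_2]_\beta$. A secondary subtlety is that in the rarefaction subcases CR-$4$ and RC-$3$ the strength balance $|\alpha_i|+|\Cpm| - |\Npm'|-|\CUp'|$ is strictly negative; this should be exploited together with the "$[Q_2]_\beta$ attains its maximum at one of the extremal speeds" observation made in the preamble of Section \ref{sec:quad} to upgrade the absolute bound to a sign-definite one.
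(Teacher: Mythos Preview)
Your decomposition $[Q]=[Q_1]+[Q_2]$ and the overall strategy match the paper's, but several of your quantitative claims are off and would not close the argument as written.

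\textbf{On $[Q_1]$.} The outgoing pair $\Npm',\CUp'$ is separating immediately after the split (that is why it \emph{is} a split), so $|v(\Npm')-v(\CUp')|^+=0$ and the after-contribution to $[Q_1]$ vanishes exactly; you do not need to estimate it. More importantly, the Lax gap $|\lam_i(u_*)-\lamb_*|$ you invoke is \emph{not} a constant independent of the pattern: for a concave--convex shock it scales like $\Theta(|\Cpm|)$. Hence $[Q_1]=-|v(\alpha_i)-v(\Cpm)|\,|\alpha_i||\Cpm|=-\Theta(|\alpha_i||\Cpm|^2)$, which is exactly the order of the conclusion. Your writing $[Q_1]\le -c_0|\alpha_i||\Cpm|$ with $c_0$ independent of $|\Cpm|$, and then passing to ``$|\Cpm|$ sufficiently small'', is internally inconsistent and would break the uniformity needed for Theorem~\ref{mainresult_small}.

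\textbf{On $[Q_2]_\beta$.} Your telescoping relies on $|\Npm'|+|\CUp'|=|\alpha_i|+|\Cpm|+\mathcal O(\cdot)$, but this holds only in the shock subcase CC-3. In the rarefaction subcases RC-3 and CR-4 one has $|\Npm'|+|\CUp'|=|\Cpm|-|\alpha_i|+\mathcal O(\cdot)$, so the balance $|\alpha_i|+|\Cpm|-|\Npm'|-|\CUp'|=2|\alpha_i|$ is strictly \emph{positive}, not negative as you state. With that defect the naive telescoping leaves a residue of order $|\beta|\,|\alpha_i|$ per background wave, and summing gives only $\epsilon(t)\,|\alpha_i|$, which under $\epsilon(t)\le \mathcal O(|\Cpm|)$ is $\mathcal O(|\alpha_i||\Cpm|)$ --- the \emph{same} order as $[Q_1]$, so it does not get absorbed. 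The Rankine--Hugoniot coupling you sketch does not repair this: it constrains a weighted combination of speed shifts, not the piecewise-linear functional $[Q_2]_\beta$ evaluated at the extremal $v(\beta)$.

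The paper avoids this by forgoing a single telescoping identity and instead treating RC-3, CC-3, CR-4 separately, in each case locating the maximizing $v(\beta)$ among the three interior shock speeds (the ``extremal speed'' observation in the preamble to Section~\ref{sec:quad}) and then using the \emph{speed} closeness $|v(\Npm')-v(\Cpm)|,|v(\CUp')-v(\Cpm)|=\mathcal O(|\alpha_i|)$ directly. At that extremal $v(\beta)$ one of the four weighted terms drops out, and the remaining three combine to $\mathcal O(|\alpha_i||\Cpm|)$ by elementary bookkeeping, case by case. Your framework is right; replace the telescoping step by this case analysis and correct the scaling of the Lax gap, and the proof goes through.
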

\begin{proof}
There are three types of interactions that occur, RC-$3$, CC-3, and CR-$4$.
All wave speeds, except $v(\alpha_i)$, are close.
In all these cases, 
\begin{align}
[Q_1] = - \big| v(\alpha_i) - v(\Cpm) \big| \, |\alpha_i|\cdot|\Cpm|
= - \big| \mathcal{O}(|\alpha_i|\cdot|\Cpm|^2) \big|.  \label{proof124a}
\end{align}
Consider first the case where $\alpha_i$ is a rarefaction and the shock strengths satisfy  
$$ 
          |\Npm'| + |\CDown'| = |\CDown| - |\alpha_i|+\mathcal{O}(|\CDown|\cdot|\alpha_i|),
$$
which implies   
$$ 
                     |\Npm'| + |\CDown'| < |\CDown|+\mathcal{O}(|\CDown|\cdot|\alpha_i|). 
$$
An inspection of the normalized wave speeds associated to the 
states $u_l$ and $u_r$ shows that
$$ 
       v(\Npm') < \min\big[ v(\Cpm),v(\CUp')\big] < v(\alpha_i), 
$$
and that $v(\Npm') - v(\Cpm)=\mathcal{O}(|\alpha_i|)$.
We will now analyze the interaction term $[Q_2]$ in \eqref{decompo}. 
Suppose there is a weak wave $\beta$ to the left of the interaction, then
\begin{align*}
  [Q_2]_{\beta} = &|\beta| \Big( \big|v(\beta) - v(\Npm') \big|^+ |\Npm'| +\big|v(\beta) - v(\CUp') \big|^+ |\CUp'| \\
            & \phantom{|\beta| \Big(} - \big|v(\beta) - v(\alpha_i) \big|^+ |\alpha_i|-\big|v(\beta) - v(\Cpm) \big|^+ |\Cpm|\Big).
\end{align*}
If $v(\beta) < v(\Npm')$, then $[Q_2]_{\beta}<0$. The largest value of this function
of $v(\beta)$ occurs when $v(\Npm')<v(\CUp')< v(\beta) =v(\Cpm)<v(\alpha_i)$, but in that case 
\begin{align}
         [Q_2]_{\beta} \leq & |\beta| \Big( \big|v(\Cpm) - v(\Npm') \big| |\Npm'| +\big|v(\Cpm) - v(\CUp') \big| |\CUp'| 
                              - \big|v(\Cpm) - v(\alpha_i) \big| |\alpha_i|\Big) \nonumber \\
                   \leq &|\beta|\Big(\mathcal{O}(|\alpha_i|)|\Npm'| + \mathcal{O}(|\alpha_i|)|\CUp'|\Big)
                   \leq \mathcal{O}(|\beta||\alpha_i||\Cpm|)      \label{proof124b}.
\end{align}
Still in  the case RC-$3$, suppose now there is a weak wave $\beta$ to the right of the interaction, then
\begin{align*}
  [Q_2]_{\beta} = &|\beta| \Big( \big|v(\Npm')- v(\beta)  \big|^+ |\Npm'| +\big|v(\CUp') - v(\beta) \big|^+ |\CUp'| \\
         & \phantom{|\beta| \Big(}- \big|v(\alpha_i)- v(\beta) \big|^+ |\alpha_i|-\big|v|(\Cpm) - v(\beta) \big|^+ |\Cpm|\Big).
\end{align*}
The variation is largest when  $v(\Npm')< v(\Cpm)=v(\beta) < v(\CUp')<v(\alpha_i)$, in which case
\begin{align}
  [Q_2]_{\beta}    \leq & |\beta| \Big( \big| v(\CUp')-v(\Cpm)\big| |\CUp'| -  \big| v(\alpha_i)-v(\Cpm)\big| |\alpha_i| \Big)\nonumber\\
               \leq & |\beta|\Big(\mathcal{O}(|\alpha_i|)|\Cpm|+\mathcal{O}(|\Cpm|)|\alpha_i| \Big)
               = \mathcal{O}(|\beta||\alpha_i||\Cpm|).\label{proof124c}
\end{align}
Combining \eqref{proof124a} with either \eqref{proof124b} or \eqref{proof124c}, we find 
\begin{align*}
        [Q] = [Q_1] + \sum_{\beta}[Q_2]_{\beta} \leq - \Big| \mathcal{O}(|\alpha_i|\cdot|\Cpm|^2) \Big|
                + \sum_\beta \mathcal{O}(|\beta||\alpha_i||\Cpm|),
\end{align*}
which is negative if a bound of the form $\epsilon(t)<\mathcal{O}(|\Cpm|)$ is satisfied by the perturbation.
\vskip.15cm 
In the case CC-$3$,  the weak wave $\alpha_i$ can come either from the left or the right of $\Cpm$.
For the sake of brevity, we will consider only left incoming weak waves $\alpha_i$. The speeds satisfy 
\begin{equation}      \label{vspeedcc3}
           v(\Cpm) < v(\Npm') <  v(\CUp') < v(\alpha_i),
\end{equation}
and for the strengths we have 
\begin{equation*}\label{vspeed1}
         |{\Npm}'| + |{\Cm}'| = |\Cpm| - \operatorname{sign}(\alpha_i) |\alpha_i| +\mathcal{O}(|\Cpm|\cdot|\alpha_i|). 
\end{equation*}
Suppose that there is a weak wave $\beta$ to the left of the interaction, then
\begin{align*}
  [Q_2]_{\beta} = &|\beta| \Big( \big|v(\beta) - v(\Npm') \big|^+ |\Npm'| +\big|(v(\beta) - v(\CUp') \big|^+ |\CUp'| \\
         &   \phantom{|\beta| \Big(}- \big|v(\beta) - v(\alpha_i) \big|^+ |\alpha_i|-\big|v(\beta) - v(\Cpm) \big|^+ |\Cpm|\Big).
\end{align*}
The worst case occurs when $ v(\beta)=  v(\alpha_i)$ and the change is 
\begin{align}\label{proof124d}
  [Q_2]_{\beta}  \leq  &  |\beta|\Big(  \big| v(\alpha_i) - v(\Npm') \big|^+ |\Npm'|
                        +\big|v(\alpha_i) - v(\CUp') \big|^+ |\CUp'|-\big|v(\alpha_i) - v(\Cpm) \big|^+ |\Cpm|\Big).
\end{align}
Suppose now that there is a weak wave $\beta$ to the right of the interaction, then the largest value
is found when $v(\beta) = v(\Cpm)$.
\begin{align*}
  [Q_2]_{\beta} =  &   |\beta| \Big( \big| v(\Npm')- v(\Cpm)  \big|^+ |\Npm'| 
                                           +\big|v(\CUp') - v(\Cpm) \big|^+ |\CUp'| - \big|v(\alpha_i)- v(\Cpm) \big|^+ |\alpha_i|\Big) \\
         \leq& |\beta|\Big(\mathcal{O}(|\alpha_i|)|\Npm'| + \mathcal{O}(|\alpha_i|)|\CUp'| - \mathcal{O}(|\Cpm|)|\alpha_i|\Big) 
         = \mathcal{O}(|\beta||\alpha_i||\Cpm|).
\end{align*}
As in the case RC-$3$, imposing an a priori constraint $\sum_{\beta}| \beta | \leq \mathcal{O}(|\Cpm|)$ implies that $[Q]$ is negative.
\vskip.15cm 
\vskip.15cm 
In the case CR-$4$, the speeds are ordered as 
\begin{equation}\label{vspeed}
 v(\alpha_i) < v(\Npm') <  \min\Big[v(\Cpm),v(\CUp')\Big].
\end{equation}
Suppose there is a weak wave $\beta$ to the left of the interaction, then
\begin{align*}
  [Q_2]_{\beta} = &|\beta| \Big( \big|v(\beta) - v(\Npm') \big|^+ |\Npm'| +\big|v(\beta) - v(\CUp') \big|^+ |\CUp'| \\
         & \phantom{|\beta| \Big(} - \big|v(\beta) - v(\alpha_i) \big|^+ |\alpha_i|-\big|v(\beta) - v(\Cpm) \big|^+ |\Cpm|\Big).
\end{align*}
The worst case corresponds to $ v(\beta)=v(\Cpm)>v(\CUp')$. Then, we estimate
\begin{align*}
  [Q_2]_{\beta} = &|\beta| \Big( \big|v(\Cpm) - v(\Npm') \big| |\Npm'| +\big|v(\Cpm) - v(\CUp') \big|^+ |\CUp'|- \big|v(\Cpm) - v(\alpha_i) \big| |\alpha_i|\Big)\\
          \leq & |\beta|\Big(\mathcal{O}(|\alpha_i|)|\Npm'| + \mathcal{O}(|\alpha_i|)|\CUp'| - \mathcal{O}(|\Cpm|)|\alpha_i|\Big) 
         =  \mathcal{O}(|\beta||\alpha_i||\Cpm|).
\end{align*}
Taking into account \eqref{vspeed1} and an a priori bond $| \beta | \leq \mathcal{O}(|\Cpm|)$, we deduce 
\begin{equation} \label{proof124f}
[Q_2]_{\beta} \leq |\beta|\big( \mathcal{O}(|\Cpm|)|\alpha_i|\big) \leq \mathcal{O}(|\alpha_i||\Cpm|^2).
\end{equation}
Suppose now there is a weak wave $\beta$ to the right of the interaction, then
\begin{align*}
  [Q_2]_{\beta} = &|\beta| \Big( \big| v(\Npm')- v(\beta)  \big|^+ |\Npm'| +\big|v(\CUp') - v(\beta) \big|^+ |\CUp'| \\
         & \phantom{|\beta| \Big(} - \big|v(\alpha_i)- v(\beta) \big|^+ |\alpha_i|-\big|v(\Cpm) - v(\beta) \big|^+ |\Cpm|\Big).
\end{align*}
The two extreme cases  correspond to $v(\beta)=v(\Cpm)$ and $ v(\beta)=v(\alpha_i)$, but we consider here only 
$v(\alpha_i) < v(\Npm')<v(\beta)=v(\Cpm)< v(\CUp')$. If this happens, 
\begin{align}
  [Q_2]_{\beta} = &|\beta| \Big( \big| v(\Npm')- v(\Cpm)  \big| |\Npm'| +\big|v(\CUp') - v(\Cpm) \big| |\CUp'|  
        - \big|v(\alpha_i)- v(\Cpm) \big| |\alpha_i|\Big)\nonumber\\
         & \leq |\beta|\Big(\mathcal{O}(|\alpha_i|)|\Npm'| + \mathcal{O}(|\alpha_i|)|\CUp'| - \mathcal{O}(|\Cpm|)|\alpha_i|\Big)
         \nonumber
         = \mathcal{O}(|\beta||\alpha_i||\Cpm|). \label{proof124g}
\end{align}
Again, combining  \eqref{proof124a} with an a priori bond  $\epsilon(t) \leq \mathcal{O}(|\Cpm|)$, we deduce that $[Q] \leq 0$.
\end{proof}

\begin{lemma}[Case 2. $ \Npm \CUp \rightarrow {\Cpm}'$]\label{Qcase:2}
If during the strong interaction $ \Npm \CUp \rightarrow {\Cpm}'$ at some time $t$, a bound of the form 
\be      \label{qcase2:apriori}
          \epsilon(t) \leq \mathcal{O}(|\CUp|),
\ee          
is satisfied by the perturbation, then 
$$
      [Q] \leq - \big|\mathcal{O}(\big| v(\Npm) - v(\CUp) \big| \, |\Npm| \cdot |\CUp| ) \big|.
$$   
\end{lemma}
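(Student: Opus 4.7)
The plan is to adapt the approach of Lemma~\ref{Qcase:1} and write the change as $[Q] = [Q_1] + [Q_2]$. For $[Q_1]$, the pair $(\Npm, \CUp)$ is approaching (same family $i$, both shocks with $v(\Npm) > v(\CUp)$, which is exactly the geometric condition that enables the merger) and therefore contributes the weighted term $|v(\Npm) - v(\CUp)| \cdot |\Npm| \cdot |\CUp|$ to the second sum of \eqref{decompo} prior to the interaction. After the interaction only the single outgoing shock ${\Cpm}'$ is present, so no analogous self-pairing among strong $i$-waves remains. I therefore obtain immediately
$$
   [Q_1] = -|v(\Npm) - v(\CUp)| \cdot |\Npm| \cdot |\CUp|,
$$
which is precisely the first-order negative quantity claimed in the statement.

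Next, I would handle $[Q_2]$ wave-by-wave. For a weak wave $\beta$ on the left of the interaction, the change is
$$
   [Q_2]_\beta = |\beta|\Big(\big|v(\beta) - v({\Cpm}')\big|^+ |{\Cpm}'| - \big|v(\beta) - v(\Npm)\big|^+ |\Npm| - \big|v(\beta) - v(\CUp)\big|^+ |\CUp|\Big),
$$
with a symmetric formula for weak waves on the right. By Lemma~\ref{case:2} together with Theorem~\ref{thm:Glimm_interaction_estimates}, one has $|{\Cpm}'| = |\Npm| + |\CUp| + \mathcal{O}(|\Npm|\cdot|\CUp| \cdot |v(\Npm)-v(\CUp)|)$. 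The additional ingredient is the vectorial Rankine--Hugoniot identity $v({\Cpm}')(u_r - u_\ell) = v(\Npm)(u_m - u_\ell) + v(\CUp)(u_r - u_m)$, which, upon projection onto a suitable direction and use of the equivalence between Euclidean and generalized shock strengths from Lemma~\ref{lem:equivalence_norm}, yields the scalar identity $(v(\Npm) - v({\Cpm}'))|\Npm| + (v(\CUp) - v({\Cpm}'))|\CUp| = \mathcal{O}(|\Npm|\cdot|\CUp| \cdot |v(\Npm)-v(\CUp)|)$. Substituting into $[Q_2]_\beta$ and Taylor expanding the weights $|v(\beta)-v(\cdot)|^+$ around the common unperturbed speed $\lamb_*$, the first-order terms cancel, leaving the quadratic residual
$$
   [Q_2]_\beta \leq |\beta|\cdot \mathcal{O}\big(|\Npm| \cdot |\CUp| \cdot |v(\Npm)-v(\CUp)|\big).
$$
Summing over all weak waves and invoking the a priori bound \eqref{qcase2:apriori}, this yields $[Q_2] \leq \mathcal{O}(|\CUp|^2 \cdot |\Npm| \cdot |v(\Npm)-v(\CUp)|)$, which is absorbed by $[Q_1]$ for $|\CUp|$ sufficiently small.

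The main technical obstacle I expect is the cancellation argument in $[Q_2]_\beta$: it requires deducing a scalar identity for $v({\Cpm}')$ as a strength-weighted average of $v(\Npm)$ and $v(\CUp)$ from the vectorial Rankine--Hugoniot relations, with error of order $|\Npm|\cdot|\CUp| \cdot |v(\Npm)-v(\CUp)|$, together with the verification that this residual is uniform over all admissible weak waves $\beta$. A secondary concern is that the interaction itself generates new weak waves of size $\mathcal{O}(|\Npm|\cdot|\CUp| \cdot |v(\Npm)-v(\CUp)|)$ (cf.\ Lemma~\ref{case:2}); their pairing with the pre-existing small waves in $Q$ must also be bounded using the a priori bound, which again yields a second-order contribution absorbed by $[Q_1]$.
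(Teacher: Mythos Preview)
Your decomposition $[Q]=[Q_1]+[Q_2]$ and your computation of $[Q_1]$ coincide with the paper's. Where you diverge is in the treatment of $[Q_2]$: you set up a Rankine--Hugoniot/Taylor cancellation to obtain a residual $\mathcal{O}(|\beta|\,|\Npm|\,|\CUp|\,|v(\Npm)-v(\CUp)|)$ for every weak wave, whereas the paper exploits a much more elementary observation. It first records the speed ordering $v(\CUp)<v(\Npm)<v({\Cpm}')$ together with the quasi-additivity $|{\Cpm}'|\approx|\Npm|+|\CUp|$, and then checks directly on the piecewise-linear expression that for \emph{any} weak wave $\beta$ on the left one has $[Q_2]_\beta\le 0$, with no error term at all. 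For waves on the right it does the same worst-case analysis as in Lemma~\ref{Qcase:1}, finding the maximum at $v(\beta)=v(\Npm)$ and bounding $[Q_2]_\beta\le |\beta|\,|v({\Cpm}')-v(\Npm)|\,|{\Cpm}'|$; the a priori bound $\epsilon(t)\le\mathcal{O}(|\CUp|)$ then suffices. This avoids entirely the projection of the vectorial Rankine--Hugoniot identity onto a scalar strength relation, a step which in your argument is delicate because the generalized strength $|\,\cdot\,|$ of Definition~\ref{defn:waveLL} is not a linear projection of $u_+-u_-$ (the equivalence in Lemma~\ref{lem:equivalence_norm} gives comparable magnitudes but not the signed linear identity you need), and because expanding $|v(\beta)-v(\cdot)|^+$ near its kink requires the case analysis you did not spell out.

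Your route is workable and even yields a slightly sharper residual (an extra factor $|\CUp|$), but it carries more technical debt; the paper's route trades that for a two-line monotonicity check. If you keep your argument, you should make explicit (i) that the Taylor-type cancellation is only invoked in the regime where all three positive parts are active, the complementary regimes being trivially nonpositive, and (ii) that the scalar speed identity holds with the claimed error despite the nonlinearity of the generalized strength.
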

\begin{proof}
Consider the strong $\Npm \CUp \rightarrow {\Cpm}'$ interaction. We have
\begin{gather*}
      |\Cpm'| =  |\Npm| + |\CUp| + \mathcal{O}(|\Npm| |\CUp| ), \\
      v( \CUp ) < v(\Npm) < v(\Cpm').
\end{gather*}
All wave speeds are close and in fact, for example 
$|v(\Npm) - v(\CUp)| = \mathcal{O}\big( |\Npm| - |\CUp| - |\Cpm'| \big) $,
is of the size of the perturbation $\epsilon(t)$.  Clearly, 
\begin{align}
[Q_1] = & - \, \big| v(\Npm) - v(\CUp) \big| \, |\Npm|\cdot|\CUp|. \label{Q1nc} 
\end{align}
Consider a weak wave $\beta$ located to the left of the interaction. Then for all speeds $v(\beta)$, 
\begin{align*}
  [Q_2]_{\beta} = &|\beta| \Big( \big|v(\beta) - v(\Cpm') \big|^+ |\Cpm'|- \big|v(\beta) - v(\Npm) \big|^+ |\Npm|-\big|v(\beta) - v(\CUp) \big|^+ |\CUp|\Big) \leq 0.
\end{align*}
Consider a weak wave $\beta$ located to the right of the interaction. The variation is 
\begin{align*}
  [Q_2]_{\beta} = &|\beta| \Big( \big| v(\Cpm')- v(\beta)  \big|^+ |\Cpm'| - \big|v(\Npm)- v(\beta) \big|^+ |\Npm|-\big|v(\CUp) - v(\beta) \big|^+ |\CUp|\Big),
\end{align*}
The worst case is $v(\beta)=v(\Npm)$, then we have the obvious overestimate
\begin{align*}
  [Q_2]_{\beta} \leq &|\beta| \big| v(\Cpm')- v(\Npm)  \big| |\Cpm'| =\mathcal{O}(|\beta| \big| v(\Cpm')- v(\Npm)  \big||\Npm|).
\end{align*}
And so, if $\epsilon(t) \leq \mathcal{O}(|\CUp|)$ then the result is demonstrated.
\end{proof}

\begin{lemma}[Case 3. $\alpha_i \Cpm  \rightarrow {\Cpm}'$ or $\Cpm \alpha_i \rightarrow {\Cpm}'$]\label{Qcase:3}
The interaction of a weak wave $\alpha_i$ of the $i$-th family with a strong wave of the same family $\Cpm$ splits into four distinct cases, depending on whether $\alpha_i$  is a shock or a rarefaction, and whether the wave approaches from the left or the right.
In all of these cases, if a bound of the form 
\be          \label{qcase3:apriori}
          \epsilon(t) \leq \mathcal{O}(|\CDown|) 
\ee 
is satisfied then 
$$
       [Q] \leq - \big| \mathcal{O}(|\alpha_i|\cdot|\Cpm|^2) \big|.
$$
\end{lemma}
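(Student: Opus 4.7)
The plan is to follow the template of the proofs of Lemmas~\ref{Qcase:1} and \ref{Qcase:2}: enumerate the four subcases (whether $\alpha_i$ is a shock or a rarefaction, whether it approaches from the left or from the right), decompose $[Q] = [Q_1] + \sum_\beta [Q_2]_\beta$, extract a dominant negative contribution from $[Q_1]$, and show that the cross terms $[Q_2]_\beta$ are of strictly higher order in $|\Cpm|$. Since $\Cpm$ is a Lax shock, any adjacent weak $i$-wave $\alpha_i$ is always approaching it, and Theorem~\ref{thm:Glimm_interaction_estimates} ensures that the outgoing $i$-wave is a single classical shock ${\Cpm}'$ with
\[
   |{\Cpm}'| = |\Cpm| - \operatorname{sign}(\alpha_i)\,|\alpha_i| + \mathcal{O}(|\Cpm|\cdot|\alpha_i|),
\]
so that no new wave of family $i$ is generated.

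For $[Q_1]$, the key observation is that the speed of the strong Lax shock $\Cpm$ is separated from the characteristic speed at either of its endpoints by a gap of order $|\Cpm|$, uniformly in $u_-\in\Bone$, because $\Cpm$ is a transcritical shock across the manifold $\MM_i$ with $u_-$ at bounded distance from $\MM_i$. Consequently,
\[
    \big|v(\alpha_i) - v(\Cpm)\big| \sim |\Cpm|,
\]
and since $\alpha_i$ and $\Cpm$ are approaching while ${\Cpm}'$ is the only outgoing $i$-wave, the initial interaction weight is entirely released and none is reconstituted, giving
\[
    [Q_1] = -\big|v(\alpha_i) - v(\Cpm)\big|\cdot|\alpha_i|\cdot|\Cpm| = -\big|\mathcal{O}(|\alpha_i|\cdot|\Cpm|^2)\big|.
\]

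For the cross terms $[Q_2]_\beta$, I would reuse verbatim the piecewise-linear maximization argument of Lemma~\ref{Qcase:1}: for any weak wave $\beta$ located on either side of the interaction, the function $v(\beta)\mapsto [Q_2]_\beta$ is continuous and piecewise linear with breakpoints at the wave speeds $v(\alpha_i)$, $v(\Cpm)$, $v({\Cpm}')$, and its worst value is attained at one of these breakpoints. Combining the upper bound $|v(\alpha_i)-v(\Cpm)| = \mathcal{O}(|\Cpm|)$ with the strength balance above yields, in each configuration,
\[
   [Q_2]_\beta \leq \mathcal{O}(|\beta|\cdot|\alpha_i|\cdot|\Cpm|),
\]
exactly as in the estimates~\eqref{proof124b}--\eqref{proof124g}. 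Summing over $\beta$ and invoking the a priori bound~\eqref{qcase3:apriori} gives $\sum_\beta [Q_2]_\beta \leq \mathcal{O}(\epsilon(t)\cdot|\alpha_i|\cdot|\Cpm|) \leq \mathcal{O}(|\alpha_i|\cdot|\Cpm|^2)$, which is absorbed by $[Q_1]$ provided the implicit constant in~\eqref{qcase3:apriori} is chosen sufficiently small. The main obstacle will be to check, in each of the four subcases, that the perturbation does not push the merged pattern across the nucleation threshold and so trigger a splitting ${\Cpm}' \rightsquigarrow N'\CUp'$; this requires the smallness of $|\alpha_i|$ relative to the nucleation gap $|\mu_i^\sharp(u_-) - \mu_i^n(u_-)|$ built into the hypothesis~\eqref{qcase3:apriori}, so that the outgoing $i$-wave truly remains a single classical shock and the decomposition $[Q] = [Q_1] + \sum_\beta [Q_2]_\beta$ above applies unchanged.
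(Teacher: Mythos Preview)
Your proposal is correct and follows essentially the same approach as the paper: the decomposition $[Q]=[Q_1]+\sum_\beta[Q_2]_\beta$, the identification $[Q_1]=-|\mathcal{O}(|\alpha_i|\cdot|\Cpm|^2)|$ from the speed gap $|v(\alpha_i)-v(\Cpm)|\sim|\Cpm|$, and the piecewise-linear maximization bounding each $[Q_2]_\beta$ by $\mathcal{O}(|\beta|\cdot|\alpha_i|\cdot|\Cpm|)$ are exactly what the paper does, case by case. Your final ``main obstacle'' about the nucleation threshold is misplaced, however: the hypothesis of the lemma is precisely that the outcome is a single shock ${\Cpm}'$ (this is what distinguishes Case~3 from Case~1), so there is nothing to verify here---the Riemann solver has already been applied and the case label records its output.
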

\begin{proof} We remark that the ordering of the velocities are all distinct and 
include all cases with $v(\alpha_i)$ being either the slowest or fastest. In all of these cases only one wave is outgoing, so
\begin{align}
[Q_1] =& - \, \big| v(\alpha_i) - v(\Cpm) \big|^+ \, |\alpha_i|\cdot|\Cpm| 
=  -\mathcal{O}(|\alpha_i|\cdot|\Cpm|^2).\label{Q1case3} 
\end{align}
Consider the first interaction $\alpha_i \CDown$-${\CDown}'$ with $\alpha_i$ a shock. We have the identities
\begin{gather*}
          |\CDown'| =  |\alpha_i| + |\CDown|  + \mathcal{O}(|\alpha_i| |\CDown|), \\
         v( \CDown) < v(\CDown') < v(\alpha_i).
\end{gather*}
Suppose a weak wave $\beta$ is located to the left of the interaction, then the largest value of
\begin{align*}
  [Q_2]_{\beta} = &|\beta| \Big( \big|v(\beta) - v(\CDown') \big|^+ |\CDown'| -\big|v(\beta) - v(\CDown) \big|^+ |\CDown|- \big|v(\beta) - v(\alpha_i) \big|^+ |\alpha_i|\Big),
\end{align*}
occurs when $v(\beta) = v(\alpha_i)$, with which we find 
\begin{align*}
  [Q_2]_{\beta} \leq &|\beta| \Big( \big|v(\alpha_i) - v(\CDown') \big| |\CDown'| -\big|v(\alpha_i) - v(\CDown) \big| |\CDown|\Big)\\
              = &|\beta| \Big( \big|v(\alpha_i) - v(\CDown') \big| \big( |\CDown'|-|\CDown| \big) -\big|v(\CDown') - v(\CDown) \big| |\CDown|\Big)
              = \mathcal{O}(|\beta||\alpha_i||\CDown|).
\end{align*}
Suppose $\beta$ is to the right of the interaction.
\begin{align*}
  [Q_2]_{\beta} = &|\beta| \Big( \big| v(\CDown')- v(\beta)  \big|^+ |\CDown'| - \big|v(\CDown)- v(\beta) \big|^+ |\CDown|-\big|v(\alpha_i) - v(\beta) \big|^+ |\alpha_i|\Big) 
\end{align*}
The worst case is $v(\beta)=v(\CDown)$, we can compute
\begin{align*}
  [Q_2]_{\beta} \leq  |\beta| \Big( \big| v(\CDown')- v(\CDown)  \big| |\CDown'|-\big|v(\alpha_i) - v(\CDown) \big| |\alpha_i|\Big) 
                 = \mathcal{O}(|\beta||\alpha_i||\CDown|).
\end{align*}
Again $[Q]$ can be shown to be decreasing if an a priori bound 
$ \sum_{\beta}|\beta | \leq \mathcal{O}(|\CDown|)$ is satisfied.

Consider the second case $\alpha_i \CDown$-${\CDown}'$  with $\alpha_i$ a rarefaction. 
We observed that  the states of interest satisfy 
\begin{gather*}
|{\CDown}| - |\alpha_i|  =  |\CDown'| + \mathcal{O}(|\alpha_i| |\CDown|). \\
      v( \CDown') < v(\CDown) < v(\alpha_i).
\end{gather*}
Suppose $\beta$ is a weak wave to the left of the interaction, then 
\begin{align*}
  [Q_2]_{\beta} \leq &|\beta|  \big| v(\CDown)- v(\CDown')  \big|^+ |\CDown'|  = \mathcal{O}(|\beta||\alpha_i||\CDown|).
\end{align*}
This suffices to prove the result in that case.

The third interaction is $\Cpm \alpha_i$-${\Cpm}'$ where $\alpha_i$ is a shock. The following two identities hold
\begin{gather*}
      |\CDown'| =  |\Cpm| + |\alpha_i| + \mathcal{O}(|\alpha_i| |\CDown|), \\
      v( \alpha_i) < v(\Cpm) < v(\Cpm').
\end{gather*}
Suppose $\beta$  is to the left of the interaction, then the variation in $Q_2$ is 
\begin{align*}
  [Q_2]_{\beta} = &|\beta| \Big( \big|v(\beta) - v(\Cpm') \big|^+ |\Cpm'| -\big|v(\beta) - v(\Cpm) \big|^+ |\Cpm|- \big|v(\beta) - v(\alpha_i) \big|^+ |\alpha_i|\Big).
\end{align*}
For $v(\beta)$, $|v(\beta) - v(\Cpm')| < |v(\beta) - v(\Cpm) |$ and $|\Cpm'| < |\Cpm|$ hence this quantity is always negative. 
Suppose now that $\beta$  is to the right of the interaction, then
\begin{align*}
  [Q_2]_{\beta} = &|\beta| \Big( \big|v(\Cpm')- v(\beta) \big|^+ |\Cpm'| -\big| v(\Cpm)- v(\beta) \big|^+ |\Cpm|- \big|v(\alpha_i) - v(\beta) \big|^+ |\alpha_i|\Big),
\end{align*}
and the worst case is $v(\beta)=v(\Cpm)$, for which we find 
\begin{align*}
  [Q_2]_{\beta} = &|\beta|  \big|v(\Cpm')-  v(\Cpm) \big| |\Cpm'|  
           = \mathcal{O}(|\beta||\alpha_i||\Cpm|).
\end{align*}
In the third case, this shows that an a priori bound $\sum_{\beta}|\beta| \leq \mathcal{O}(|\Cpm|)$ is sufficient to show that 
$[Q]$ is decreasing.


The fourth and last interaction is $\Cpm \Rm$-${\Cpm}'$ where $\alpha_i$ is a rarefaction.
The states of interest imply  $|\Cpm'| < |\Cpm|$ and that the speeds are ordered 
$$
         v(\alpha_i) < v(\Cpm') < v(\Cpm).  
$$
Recall also that only one wave is outgoing, so we trivially have $[Q_1] = - |v(\Cpm) - v(\alpha_i)| |\alpha_i |\Cpm|$.  Assume now that a weak wave  $\beta$ is located 
to the left of the interaction, then the least favorable case corresponds to the situation where $v(\beta) = v(\Cpm)$, and
\begin{align*}
  [Q_2]_{\beta} \leq &|\beta| \Big( \big|v(\Cpm) - v(\Cpm') \big| |\Cpm'| - \big|v(\Cpm) - v(\alpha_i) \big| |\alpha_i|\Big)=\mathcal{O}(|\beta||\alpha_i||\Cpm|).
\end{align*}
If $\beta$ is to the right, then the worst case is  $v(\beta)=v(\alpha_i)$ and we have
$$
           [Q_2]_{\beta} \leq |\beta| \Big( \big| v(\Cpm') - v(\alpha_i) \big| |\Cpm'| - \big| v(\Cpm) - v(\alpha_i) \big| |\Cpm| \Big) \leq 0.
$$ 
In conclusion, four a priori bounds $\epsilon(t) \leq \mathcal{O}(|\Cpm|)$ imply that $Q$ is decreasing by an amount $\mathcal{O}(|\alpha_i||\Cpm|^2)$.
\end{proof}

\begin{lemma}[Case 4. $\alpha_i \Npm  \rightarrow {\Npm}' \alpha_i'$]\label{Qcase:4}
Consider at some time $t$ an interaction involving a weak wave $\alpha_i$ approaching $ \Npm $ from the left  with outgoing waves $ \Npm' $ and $\alpha_i'$.
If the perturbation satisfies an a priori bound of the form 
\be          \label{qcase4:apriori}
        \epsilon(t) \leq \mathcal{O}(|\Npm|), 
\ee
then one has 
$$
   [Q] \leq -  \big| \mathcal{O}(|\alpha_i||\Npm|^2) \big|.
$$
\end{lemma}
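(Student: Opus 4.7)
The plan follows the template established by Lemmas~\ref{Qcase:1}--\ref{Qcase:3}: decompose the change as $[Q] = [Q_1] + [Q_2]$, extract a negative contribution of size $|\alpha_i|\cdot|\Npm|^2$ from the direct interaction $[Q_1]$, bound the remainder $[Q_2] = \sum_\beta [Q_2]_\beta$ by $\epsilon(t)\cdot\mathcal{O}(|\alpha_i|\cdot|\Npm|)$, and close the estimate using the a priori hypothesis \eqref{qcase4:apriori}.

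For $[Q_1]$, Lemma~\ref{case:4} restricts the admissible incoming patterns to RN or CN-$3$, and in both cases $v(\alpha_i) > v(\Npm)$ since $\alpha_i$ catches up with $\Npm$ from the left. After the interaction, the outgoing pair $\Npm' \alpha_i'$ necessarily separates, so the positive part of $|v(\Npm') - v(\alpha_i')|^+$ vanishes and
\[
   [Q_1] = -\bigl|v(\alpha_i) - v(\Npm)\bigr|\cdot|\alpha_i|\cdot|\Npm|.
\]
The strict undercompressivity of $\Npm$, together with the fact that $v(\alpha_i)$ is within $\mathcal{O}(|\alpha_i|)$ of the characteristic speed $\lam_i(u_m)$ (where $u_m$ denotes the state separating $\alpha_i$ from $\Npm$), will yield a lower bound $|v(\alpha_i) - v(\Npm)| \geq c\,|\Npm|$ for some constant $c>0$ independent of the specific admissible kinetic function. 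This gives $[Q_1] \leq -\bigl|\mathcal{O}(|\alpha_i|\cdot|\Npm|^2)\bigr|$.

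For $[Q_2]$, the strength and speed expansions recorded in the proof of Lemma~\ref{case:4} give
\[
   |\Npm'| = |\Npm| + \mathcal{O}(|\alpha_i|\cdot|\Npm|), \qquad
   |\alpha_i'| = \Cff\,|\alpha_i| + \mathcal{O}(|\alpha_i|\cdot|\Npm|),
\]
together with $v(\Npm') - v(\Npm) = \mathcal{O}(|\alpha_i|)$ and $v(\alpha_i') - v(\alpha_i) = \mathcal{O}(|\Npm|)$. The piecewise-linear maximization argument presented just before Lemma~\ref{Qcase:1} then shows that the worst case is attained at $v(\beta) = v(\alpha_i)$ if $\beta$ lies to the left and at $v(\beta) = v(\Npm)$ if $\beta$ lies to the right. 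In either configuration, pairwise cancellation of the $|\cdot|^+$ weights against the strength differences delivers $[Q_2]_\beta \leq |\beta|\cdot\mathcal{O}(|\alpha_i|\cdot|\Npm|)$, hence $[Q_2] \leq \epsilon(t)\cdot\mathcal{O}(|\alpha_i|\cdot|\Npm|)$, and combining with $[Q_1]$ under \eqref{qcase4:apriori} yields the claimed bound.

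The main obstacle I anticipate is securing the uniform lower bound $|v(\alpha_i) - v(\Npm)| \geq c\,|\Npm|$ used in the $[Q_1]$ estimate: at extreme values of the kinetic function, namely when $\mu^\flat_i(u_m)$ approaches $\mu_i^\natural(u_m)$, the nonclassical shock speed approaches the characteristic speed at $u_m^\natural$ rather than at $u_m$, and one must exploit the expansion of $\lamb_i(\mu;u_m)$ around $\mu_i^\natural$ together with the transversality of $r_i$ to $\MM_i$ to show that the residual gap with $\lam_i(u_m)$ is still of order $|\Npm|$. Once this point is handled, the rest of the argument is essentially parallel bookkeeping to what was carried out in Lemmas~\ref{Qcase:1}--\ref{Qcase:3}.
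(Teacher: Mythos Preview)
Your approach matches the paper's: the same $[Q_1]+[Q_2]$ decomposition, the same piecewise-linear maximization over $v(\beta)$, and the same closure via the a priori bound \eqref{qcase4:apriori}. One correction: the expansion $|\Npm'| = |\Npm| + \mathcal{O}(|\alpha_i|\cdot|\Npm|)$ is too strong---the left state of the nonclassical shock shifts by $\mathcal{O}(|\alpha_i|)$, so in fact $|\Npm'| - |\Npm| = \mathcal{O}(|\alpha_i|)$ (the paper records $|\Npm'| \lessgtr |\Npm|$ depending on whether $\alpha_i$ is a rarefaction or a shock, and uses this first-order difference explicitly in the shock subcase); this does not spoil your $[Q_2]_\beta$ bound, since the term $|v(\alpha_i)-v(\Npm')|\cdot\bigl||\Npm'|-|\Npm|\bigr|$ is still $\mathcal{O}(|\Npm|)\cdot\mathcal{O}(|\alpha_i|)$, but you should state the expansion correctly. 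Also, the paper treats the rarefaction and shock subcases separately and finds slightly different maximizing values of $v(\beta)$ in each (e.g.\ $v(\beta)=v(\Npm)$ rather than $v(\alpha_i)$ for left-approaching $\beta$ in the rarefaction case), though either choice yields the same $\mathcal{O}(|\beta||\alpha_i||\Npm|)$ order.
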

\begin{proof}
Whether $\alpha_i$ is a shock or a rarefaction, we find 
\begin{align} \label{Q1case4} 
            [Q_1] =  - \, \big| v(\alpha_i) - v(\Npm) \big|^+ \, |\alpha_i|\cdot|\Npm|  =  -\mathcal{O}(|\alpha_i||\Npm|^2).      
\end{align}
%
When $\alpha_i$ is a rarefaction, then we have
$$
         |\Npm'| < |\Npm|, \qquad |\alpha_i'| < |\alpha_i|, 
$$
and the inequalities on the speeds 
\begin{align*}
           v(\Npm') <  v(\Npm) < \min \big\{ v(\alpha_i), v(\alpha_i') \big\}.
\end{align*}
Using the observations above, we verify that if a weak wave $\beta$ is located to the left of the interaction then
\begin{align*}  
  [Q_2]_{\beta} =  & |\beta| \Big( \big|v(\beta) - v(\Npm') \big|^+ |\Npm'| +\big|v(\beta)- v(\alpha_i') \big|^+ |\alpha_i'| \\
                 & \phantom{|\beta| \Big(} - \big|v(\beta) - v(\Npm) \big|^+ |\Npm|- \big|v(\beta) - v(\alpha_i) \big|^+ |\alpha_i|  \Big).
\end{align*}
The largest value of $[Q_2]$ occurs when $v(\beta)= v(\Npm)$, in which case
\begin{align*}
  [Q_2]_{\beta} = & |\beta|\big|v(\Npm) - v(\Npm') \big||\Npm'| 
   = \mathcal{O}(|\beta||\alpha_i||\Npm|).
\end{align*}
When a weak wave $\beta$ is located to the right of the interaction, then 
\begin{align*}
  [Q_2]_{\beta} =  &  |\beta| \Big( \big| v(\Npm')- v(\beta)\big|^+ |\Npm'| +\big|v(\alpha_i') - v(\beta) \big|^+ |\alpha_i'|\\
                 & \phantom{|\beta| \Big(} - \big|v(\Npm) - v(\beta) \big|^+ |\Npm|- \big|v(\alpha_i)- v(\beta) \big|^+ |\alpha_i|  \Big).
\end{align*}
We find the largest value by taking $v(\beta) = v(\alpha_i)$
\begin{align*}
  [Q_2]_{\beta} =  &  |\beta| \big|v(\alpha_i') - v(\alpha_i) \big| |\alpha_i'|  = \mathcal{O}(|\beta||\alpha_i||\Npm|).
\end{align*}
Once again, $[Q]$ is decreasing  if all weak waves satisfy a condition $\sum |\beta| \leq \mathcal{O}(|\Npm|)$.

We now consider the interaction $\alpha_i \Npm \to \Npm' \alpha_i'$  where $\alpha_i$ is a shock.
We saw in the proof of Lemma \ref{case:4} that 
$$ 
          |\Npm| < |\Npm'|, \qquad  |\alpha_i'|< |\alpha_i|.
$$ 
For this interaction, we have 
$$
         v(\Npm) < v(\Npm') < \min \big\{ v(\alpha_i),  v(\alpha_i') \big\}.
$$
For the functional $[Q_2]$, when there is a weak wave $\beta$ located to the left of the interaction, we have
\begin{align*}  
  [Q_2]_{\beta} =     &   |\beta| \Big( \big|v(\beta) - v(\Npm') \big|^+ |\Npm'| +\big|v(\beta)- v(\alpha_i') \big|^+ |\alpha_i'| \\
                    &   \phantom{|\beta| \Big(}  - \big|v(\beta) - v(\Npm) \big|^+ |\Npm|- \big|v(\beta) - v(\alpha_i) \big|^+ |\alpha_i|  \Big).
\end{align*}
The worst case occurs when $v(\alpha_i') < v(\alpha_i) = v(\beta)$, and
\begin{align*}  
  [Q_2]_{\beta} = &   |\beta| \Big( \big|v(\alpha_i) - v(\Npm') \big| |\Npm'| +\big|  v(\alpha_i)- v(\alpha_i') \big| |\alpha_i'|
                      - \big|v(\alpha_i) - v(\Npm) \big| |\Npm|  \Big) \\
             = &  |\beta| \Big(   \big|v(\alpha_i) - v(\Npm') \big| \big( |\Npm'| - |\Npm|  \big) 
                                    - \big| v(\Npm') - v(\Npm) \big| | \Npm| + \big| v(\alpha_i) - v(\alpha_i') \big| |\alpha_i'| \Big) \\
             = &  \mathcal{O}(|\beta||\alpha_i||\Npm|).                            
\end{align*}
In a similar way, it is easy to verify that if there is a weak wave $\beta$ located to
 the right of the interaction, the least favorable case is $v(\beta)=v(\Npm')< v(\alpha_i) < v(\alpha_i')$
\begin{align*} 
  [Q_2]_{\beta} = &|\beta| \Big( \big|v(\Npm') - v(\beta) \big|^+ |\Npm'| +\big|v(\alpha_i')-v(\beta)\big|^+ |\alpha_i'| 
        - \big|v(\Npm) - v(\beta) \big|^+ |\Npm|- \big|v(\alpha_i)-v(\beta) \big|^+ |\alpha_i|    \Big)\\
  =    & |\beta| \Big(  \big| v(\alpha_i')-v(\Npm') \big| |\alpha_i'|  - \big| v(\Npm) - v(\Npm') \big| |\Npm|  
         - \big|v(\alpha_i)-v(\Npm') \big| |\alpha_i|    \Big)\\
   = & \mathcal{O}(|\beta||\alpha_i||\Npm|).
\end{align*}
\end{proof}



\begin{lemma}[Case 5. $\beta_j \Cpm  \rightarrow {\Npm}' {\CUp}'$ or $\Cpm \beta_j \rightarrow {\Npm}' {\CUp}'$ ]\label{Qcase:5}
Consider at some time $t$ an interaction between a weak wave $\beta_j$, $j \neq i$, with a strong classical shock $\Cpm$ that leads to a splitting
$\Npm' \CUp'$. If the  perturbation satisfies an a priori bound 
\be                   \label{qcase5:apriori}
            \epsilon(t) \leq \mathcal{O}(1), 
\ee
then
$$
   [Q] = - \big|\mathcal{O}\big( |\beta_j| \cdot |\Cpm| \big) \big|.
$$
\end{lemma}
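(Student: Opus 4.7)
The plan is to follow the two-step decomposition $[Q]=[Q_1]+[Q_2]$ already used in the proofs of Lemmas~\ref{Qcase:1}--\ref{Qcase:4}, but to exploit the key structural difference present here: since $j\neq i$, by strict hyperbolicity the gap $|v(\beta_j)-v(\Cpm)|$ is bounded below by a positive constant independent of the wave strengths. This is what allows the very weak a priori bound $\epsilon(t)\leq\mathcal{O}(1)$, in contrast to the tighter bounds required in Cases~1--4 where both interacting waves belonged to the $i$-th family and the speed gap was itself $\mathcal{O}(|\Cpm|)$ or $\mathcal{O}(|\Npm|)$.

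First, I would treat $[Q_1]$. The incoming pair $\beta_j,\Cpm$ is approaching (otherwise no interaction occurs), so the negative contribution
\[
-|v(\beta_j)-v(\Cpm)|^{+}\,|\beta_j|\,|\Cpm| \;=\; -\mathcal{O}(1)\,|\beta_j|\,|\Cpm|
\]
is first order in $|\Cpm|$. The outgoing waves $\Npm'$ and $\CUp'$ both belong to the $i$-th family; by Theorem~\ref{thm:Glimm_interaction_estimates} their strengths satisfy $|\Npm'|+|\CUp'|\leq|\Cpm|+\mathcal{O}(|\beta_j|\,|\Cpm|)$, while by the identity $\lamb_i(u_*,u_*^\sharp)=\lamb_i(u_*^\flat,u_*^\sharp)=\lamb_*$ together with the Lipschitz dependence of the speeds on their endpoints, we have $|v(\Npm')-v(\CUp')|=\mathcal{O}(|\beta_j|)$ after the splitting. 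Consequently the positive contribution to $[Q_1]$ is bounded by $\mathcal{O}(|\beta_j|\,|\Cpm|^2)$, which is absorbed by the negative first-order term whenever $|\Cpm|$ is small. This yields $[Q_1]\leq -\mathcal{O}(|\beta_j|\,|\Cpm|)$.

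Second, I would bound $[Q_2]_\beta$ for each weak wave $\beta$ lying outside the interaction. Using again that $v(\Npm'),v(\CUp')=v(\Cpm)+\mathcal{O}(|\beta_j|)$ and $|\Npm'|+|\CUp'|=|\Cpm|+\mathcal{O}(|\beta_j|\,|\Cpm|)$, the same telescoping argument used in Lemmas~\ref{Qcase:1}--\ref{Qcase:4} (writing each weight as $|v(\beta)-v(\Cpm)|^{+}+\mathcal{O}(|\beta_j|)$ and collecting the strength change) gives
\[
[Q_2]_\beta \;\leq\; |\beta|\,\mathcal{O}\!\big(|\beta_j|\,|\Cpm|\big),
\]
uniformly in the position and family of $\beta$. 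Summing over all weak waves in $u(\cdot,t)$ and using the definition \eqref{defn:pertubation_wrt_t} of $\epsilon(t)$ yields
\[
\sum_\beta [Q_2]_\beta \;\leq\; \mathcal{O}\!\big(\epsilon(t)\,|\beta_j|\,|\Cpm|\big).
\]

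Finally, combining the two estimates gives $[Q]\leq\big(-c+\mathcal{O}(|\Cpm|)+\mathcal{O}(\epsilon(t))\big)\,|\beta_j|\,|\Cpm|$ for some positive constant $c$ coming from strict hyperbolicity. Provided both $\delta_1$ (which controls $|\Cpm|$) and the a priori bound $\epsilon(t)\leq\mathcal{O}(1)$ are taken sufficiently small relative to $c$, the right-hand side is bounded by $-\mathcal{O}(|\beta_j|\,|\Cpm|)$, which is the claim. I expect the main technical nuisance, as in the earlier cases, to be the careful bookkeeping of $[Q_2]_\beta$ depending on whether $\beta$ lies to the left or right and on the ordering of its speed relative to $v(\Cpm)$, $v(\Npm')$, $v(\CUp')$ and $v(\beta_j)$; however, because the outgoing $i$-waves have speeds clustered within $\mathcal{O}(|\beta_j|)$ of $v(\Cpm)$, all sub-cases collapse to the same bound, so no genuine new difficulty arises beyond the linear-in-$|\Cpm|$ nature of the gain from $[Q_1]$.
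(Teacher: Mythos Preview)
Your proposal is correct and follows essentially the same approach as the paper: the same $[Q_1]+[Q_2]$ decomposition, the same key observation that strict hyperbolicity makes $|v(\beta_j)-v(\Cpm)|$ an $\mathcal{O}(1)$ quantity (hence the weaker a priori bound suffices), and the same telescoping bound $[Q_2]_\gamma=\mathcal{O}(|\gamma|\,|\beta_j|\,|\Cpm|)$ via $v(\Npm'),v(\CUp')=v(\Cpm)+\mathcal{O}(|\beta_j|\,|\Cpm|)$ and $|\Npm'|+|\CUp'|=|\Cpm|+\mathcal{O}(|\beta_j|\,|\Cpm|)$. The paper is slightly terser in that it simply writes $[Q_1]=-|v(\beta_j)-v(\Cpm)|\,|\Cpm|\,|\beta_j|$ without discussing a positive outgoing contribution (the outgoing $i$-waves are moving apart and $\beta_j'$ is no longer approaching them, so that term vanishes), whereas you bound it by $\mathcal{O}(|\beta_j|\,|\Cpm|^2)$; your extra care is harmless.
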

\begin{proof}
The interaction between the weak wave $\beta_j$, $j \neq i$, with $\Cpm$ leads to $\Npm \CUp$. 
For this interaction, $|\Cpm|=|\Npm'|+|\CUp'|+\mathcal{O}(|\beta_j|\cdot |\Cpm|)$
and the wave speeds are all close,
$$
    v(\Cpm) = v(\Npm') + \mathcal{O}(|\beta_j|\cdot |\Cpm|) = v(\CUp') + \mathcal{O}(|\beta_j|\cdot |\Cpm|).
$$ 
First of all, we have 
$$
       [Q_1] = - \big| v(\beta_j) - v(\Cpm) \big| \cdot |\Cpm| \cdot |\beta_j| 
            = - \Big|\mathcal{O}\big( |\beta_j| |\Cpm| \big) \Big|.
$$ 
Let $\gamma$ be a weak wave at $x<y$ interacting with a strong wave. Then
\begin{align*}  
  [Q_2]_{\beta}
  =& \, |\gamma|\cdot \Big[ \big|v(\gamma) - v(\Npm') \big|^+ |\Npm'| +\big|v(\gamma)- v(\CUp') \big|^+|\CUp'|- \big|v(\gamma) - v(\Cpm) \big|^+|\Cpm|\Big]\\
  = & \, |\gamma|\cdot \big|v(\gamma) - v(\Cpm) \big|^+ \Big[ |\Npm'| + | \CUp'| - |\Cpm| \big]  + \mathcal{O}(|\gamma| |\beta_j| |\Cpm|) \\
  = & \,  \mathcal{O}(|\gamma|\cdot |\beta_j|\cdot|\Cpm|).
\end{align*}
Similarly, if $\gamma$ is a weak wave to the right of the interaction, we have 
\begin{align*}  
  [Q_2]_{\beta}
  =& \, |\gamma|\cdot \Big[ \big| v(\Npm')-v(\gamma) \big|^+ |\Npm'| +\big|v(\CUp')-v(\gamma) \big|^+|\CUp'|- \big| v(\Cpm)-v(\gamma) \big|^+|\Cpm|\Big] \\
 = & \, \mathcal{O}(|\gamma|\cdot |\beta_j|\cdot|\Cpm|).
\end{align*}
By imposing an a priori condition on $\sum |\gamma| \leq \mathcal{O}(1)$ we can conclude that
$
           [Q] \leq - \big| \mathcal{O}(|\beta_j|\cdot|\Cpm|) \big|. 
$
\end{proof}

\begin{lemma}[Case 6. $\beta_j \Npm \rightarrow \Npm' \beta_j'$ or $\Npm \beta_j \rightarrow \beta_j' \Npm' $]\label{Qcase:6}
Consider at some time $t$ an interaction between a weak wave $\beta_j$, $j \neq i$ with a strong classical shock $\Npm$ that 
leaves of the nonclassical shock unperturbed. 
If the  perturbation satisfies an a priori bound 
\be     \label{qcase6:apriori}
       \epsilon(t) \leq \mathcal{O}(1),
\ee
then one has 
$$
   [Q] = - \Big| \mathcal{O}\big( |\beta_j| \cdot |\Npm|\big) \Big|.
$$
\end{lemma}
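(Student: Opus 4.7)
The plan is to follow the strategy established in Lemma \ref{Qcase:5} for Case 5, which handled the analogous interaction between a weak wave of family $j \neq i$ and a strong classical shock $\Cpm$ of family $i$. The key simplification compared to Case 4, which dealt with same-family interactions, is that strict hyperbolicity provides a positive lower bound on the speed gap between waves of distinct families, so $[Q_1]$ is already first-order in the strengths rather than quadratic.

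First, I would establish that $[Q_1]$ is first-order and negative. Since $j \neq i$, strict hyperbolicity of \eqref{11} gives a constant $c > 0$ depending only on the flux $f$ on $\Bzero$ such that $|v(\beta_j) - v(\Npm)| \geq c$. The waves $\beta_j$ and $\Npm$ are approaching before the interaction, while the outgoing waves $\Npm'$ and $\beta_j'$ are no longer approaching (the weak wave has crossed the strong one), so
$$
    [Q_1] = -|v(\beta_j) - v(\Npm)| \cdot |\beta_j| \cdot |\Npm| \leq -c \, |\beta_j| \cdot |\Npm|.
$$

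Next, I would invoke Theorem \ref{thm:Glimm_interaction_estimates} to extract the structural properties of the outgoing pair: $|\Npm'| = |\Npm| + \mathcal{O}(|\beta_j| \cdot |\Npm|)$ and $|\beta_j'| = |\beta_j| + \mathcal{O}(|\beta_j| \cdot |\Npm|)$, with corrections of the same order for the corresponding speeds. For any weak wave $\gamma$ outside the interaction, the contribution $[Q_2]_\gamma$ is an algebraic sum of four terms of the form $|v(\gamma) - v(\cdot)|^+ |\cdot| |\gamma|$, comparing the incoming pair $(\beta_j, \Npm)$ with the outgoing pair $(\beta_j', \Npm')$. Because both strengths and speeds agree to leading order, the leading-order terms cancel and a first-order Taylor expansion yields $[Q_2]_\gamma = \mathcal{O}(|\gamma| \cdot |\beta_j| \cdot |\Npm|)$. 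The two sub-cases listed in Lemma \ref{case:7} ($j > i$ approaching from the left, and $j < i$ approaching from the right) are symmetric and are handled identically.

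Summing over all weak waves yields $[Q_2] = \mathcal{O}(\epsilon(t)) \cdot \mathcal{O}(|\beta_j| \cdot |\Npm|)$, and with the a priori bound $\epsilon(t) \leq \mathcal{O}(1)$ taken small enough relative to the strict hyperbolicity constant $c$, the $[Q_2]$ contribution is dominated by $[Q_1]$, producing $[Q] \leq -|\mathcal{O}(|\beta_j| \cdot |\Npm|)|$. The main technical point will be verifying that the piecewise linear structure of $[Q_2]_\gamma$ in $v(\gamma)$ does not generate a term of order $|\beta_j|$ or $|\Npm|$ alone rather than of their product; this should follow from the cancellation noted above, provided one correctly tracks the positions of $\beta_j'$ and $\Npm'$ when updating the ordering implicit in $|\cdot|^+$ and accounts for the fact that $\beta_j$ now sits on the opposite side of the strong $i$-wave.
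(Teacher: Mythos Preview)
Your proposal is correct and follows essentially the same approach as the paper: compute $[Q_1] = -|v(\beta_j)-v(\Npm)|\,|\beta_j|\,|\Npm|$, use strict hyperbolicity to make this first-order in $|\beta_j|\cdot|\Npm|$, then use the Glimm-type estimates $|\Npm'|=|\Npm|+\mathcal{O}(|\beta_j|\,|\Npm|)$ and the analogous speed perturbation to show $[Q_2]_\gamma=\mathcal{O}(|\gamma|\,|\beta_j|\,|\Npm|)$, and finally sum and absorb via $\epsilon(t)\le\mathcal{O}(1)$. One minor bookkeeping point: in the paper's decomposition of $Q$, the pair $(\gamma,\beta_j)$ carries the velocity weight only when $\gamma$ is an $i$-wave, so the ``four terms'' you describe collapse to three in some sub-cases; this does not affect your cancellation argument, since the unweighted first-sum contribution also changes by $\mathcal{O}(|\gamma|\,|\beta_j|\,|\Npm|)$.
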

\begin{proof}
The weak wave $\beta_j$ from the $j$-th family crosses $\Npm$ from the 
left $(j>i)$ or from the right $(j<i)$, possibly generating weak secondary waves of 
strength $\mathcal{O}(|\beta_j| |\Npm|)$. 
We have the identity
\begin{gather*}
      |\Npm|=|\Npm'| + \mathcal{O}(|\beta_j|\cdot|\Npm|).
\end{gather*}
During such an interaction 
$$
    [Q_1] = -\big| v(\beta_j) - v(\Npm) \big| \cdot |\Npm| \cdot |\beta_j| 
           = - \big| \mathcal{O}\big( |\beta_j| |\Npm|\big) \big|.
$$    
For a weak wave $\gamma$ interacting from the left 
\begin{align*}  
  [Q_2]   = & \, |\gamma|\cdot \Big[ \big|v(\gamma) - v(\Npm') \big|^+ |\Npm'| 
                             +\big|v(\gamma)- v(\beta_j') \big|^+|\beta_j'|- \big|v(\gamma) - v(\Npm) \big|^+|\Npm|\Big]\\
             = & \, |\gamma| \big|v(\gamma) - v(\Npm) \big|^+ \big[  |\Npm'|  - |\Npm| \big]  +   \mathcal{O}(|\gamma|\cdot |\beta_j|\cdot|\Npm|) \\                
              \leq & \,  \mathcal{O}(|\gamma|\cdot |\beta_j|\cdot|\Npm|).
\end{align*}
If $\gamma$ is a weak wave coming from the right and interacting with the strong wave, we have 
\begin{align*}  
  [Q_2] 
  =& \, |\gamma|\cdot \Big[ \big| v(\Npm')-v(\gamma) \big|^+ |\Npm'| +\big|v(\beta_i')-v(\gamma) \big|^+|\beta_i'| 
                          - \big|v(\Npm)-v(\gamma) \big|^+|\Npm|\Big]\\
  \leq & \, \mathcal{O}(|\gamma|\cdot |\beta_j|\cdot|\Npm|).
\end{align*}
In conclusion, the usual a priori condition $\sum |\gamma| \leq \mathcal{O}(1)$ implies that
$$
     [Q] = [Q_1] + [Q_2]= - \Big| \mathcal{O}\big(  |\beta_j|\cdot|\Npm| \big) \Big|.
     \qedhere
$$
\end{proof}

\begin{lemma}[Case 7. $\beta_j C \rightarrow C' \beta_j'$ or $C \beta_j \rightarrow \beta_j' C'$] \label{Qcase:7}
Consider at some time $t$ a weak wave $\beta_j$ belonging to the family $j\neq i$ and crossing $C = \Cpm$ or $\CUp$.
Then if the perturbation satisfies 
\be      \label{qcase7:apriori}
          \epsilon(t) \leq \mathcal{O}(1), 
\ee
then one has 
$$
        [Q] = - \big| \mathcal{O}\big( |\beta_j| \cdot  |C| \big) \big|.
$$
\end{lemma}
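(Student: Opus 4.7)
The plan is to follow the template established in Lemma~\ref{Qcase:5} and Lemma~\ref{Qcase:6}, exploiting the fact that in Case 7 the weak wave $\beta_j$ merely crosses the strong shock $C$ and leaves its type unchanged, producing only small secondary waves in families other than $i$ and $j$. I would treat the two sub-cases $\beta_j C \rightarrow C' \beta_j'$ (with $j > i$) and $C \beta_j \rightarrow \beta_j' C'$ (with $j < i$) simultaneously, since they are entirely symmetric, and I would moreover handle $C = \Cpm$ and $C = \CUp$ in a unified way because the argument uses only the strength and speed of $C$, not its classical/nonclassical nature.

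First, I would appeal to Theorem~\ref{thm:Glimm_interaction_estimates} to record that the outgoing strengths satisfy $|C'| = |C| + \mathcal{O}(|\beta_j| \cdot |C|)$ and $|\beta_j'| = |\beta_j| + \mathcal{O}(|\beta_j| \cdot |C|)$, together with wave speeds $v(C')$ and $v(\beta_j')$ differing from $v(C)$ and $v(\beta_j)$ by terms of order $|\beta_j|$ and $|C|$ respectively. Then I would decompose $[Q] = [Q_1] + [Q_2]$ as introduced in Section~\ref{sec:quad}. The key observation for $[Q_1]$ is that the incoming pair $(\beta_j, C)$ is approaching and contributes the positive weight $|v(\beta_j) - v(C)| \cdot |\beta_j| \cdot |C|$ to $Q$, whereas after the crossing the pair $(C', \beta_j')$ is arranged in the opposite spatial order and is therefore no longer approaching, so its contribution to $Q$ vanishes. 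Hence $[Q_1]$ reduces to minus the incoming weight, and strict hyperbolicity guarantees a uniform positive lower bound on the spectral gap $|v(\beta_j) - v(C)|$ over $\Bzero$, yielding $[Q_1] = -|\mathcal{O}(|\beta_j| \cdot |C|)|$ with an implicit constant bounded below away from zero.

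For $[Q_2]$, the contribution from any other weak wave $\gamma$ is a difference of four weighted products $|v(\gamma) - v(\cdot)|^+ \cdot |\cdot| \cdot |\gamma|$ associated with the incoming and outgoing waves, and since the strengths and speeds of the interacting waves change only by amounts of order $|\beta_j| \cdot |C|$ and $|\beta_j| + |C|$ respectively, this difference is bounded by $\mathcal{O}(|\gamma| \cdot |\beta_j| \cdot |C|)$ in both the $\gamma$-to-the-left and $\gamma$-to-the-right configurations, exactly as in the analogous analysis of Lemma~\ref{Qcase:6}. Summing over all weak waves $\gamma$ gives $[Q_2] = \mathcal{O}(\epsilon(t) \cdot |\beta_j| \cdot |C|)$, and combining with the estimate for $[Q_1]$ produces the desired conclusion provided the implicit constant in the a priori bound \eqref{qcase7:apriori} is chosen sufficiently small.

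The main obstacle I foresee is purely technical: ensuring that the first-order negative term in $[Q_1]$ strictly dominates the potentially positive $[Q_2]$ contribution uniformly across all admissible configurations. This forces the implicit $\mathcal{O}(1)$ constant in \eqref{qcase7:apriori} to be tied explicitly to the minimum spectral separation $\min_{k \neq \ell, \, u \in \Bzero} |\lambda_k(u) - \lambda_\ell(u)|$, and this threshold will need to be consistent with the thresholds demanded by Lemmas~\ref{Qcase:1}--\ref{Qcase:6} when the estimates are combined globally in Section~\ref{sec:globalestimates}. Apart from this bookkeeping, no new ideas are required beyond those developed in the preceding seven lemmas.
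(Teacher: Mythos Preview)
Your proposal is correct and follows essentially the same route as the paper: decompose $[Q]=[Q_1]+[Q_2]$, observe that the outgoing pair $(C',\beta_j')$ no longer contributes so that $[Q_1]=-|v(\beta_j)-v(C)|\,|\beta_j|\,|C|$, invoke the spectral gap from strict hyperbolicity to get $[Q_1]=-|\mathcal{O}(|\beta_j|\,|C|)|$, and bound each $[Q_2]_\gamma$ by $\mathcal{O}(|\gamma|\,|\beta_j|\,|C|)$ via the first-order changes in strength and speed of $C$. The paper treats $C=\Cpm$ and $C=\CUp$ in two short paragraphs rather than unifying them, but the computations are identical to yours.
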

\begin{proof}
Consider first the situation when $C = \Cpm$. We have the following estimates,
$$
        |\Cpm|=|\Cpm'|+\mathcal{O}(|\beta_j|\cdot|\Cpm|), \qquad   v(\Cpm)   = v(\Cpm') + \mathcal{O}(|\beta_j|\cdot|\Cpm|).
$$
We observe that
$$
   [Q_1] = - \big|v(\beta_j) - v(\Cpm) \big| \cdot |\Cpm| \cdot |\beta_j| = - \Big|\mathcal{O}\big( |\beta_j| |\Cpm| \big) \Big|.
$$
For a weak wave $\gamma$ interacting from the left 
\begin{align*}  
  [Q_2] 
  =& \, |\gamma|\cdot \Big[ \big|v(\gamma) - v(\Cpm') \big|^+ |\Cpm'|- \big|v(\gamma) - v(\Cpm) \big|^+|\Cpm|\Big]\\
  =& \, |\gamma|\cdot \big|v(\gamma) - v(\Cpm) \big|^+ \big[ |\Cpm'|- |\Cpm| \big] + \mathcal{O}(|\gamma| \cdot |\beta_j|\cdot|\Cpm|)\\
  \leq & \, \mathcal{O}(|\gamma|\cdot |\beta_j|\cdot|\Cpm|).
\end{align*}
If $\gamma$ is a weak wave coming from the right and interacting with the nonclassical wave, then the same argument shows that
\begin{align*}  
  [Q_2] = |\gamma|\cdot \Big[ \big| v(\Cpm')-v(\gamma) \big|^+ |\Cpm'| - \big|v(\Cpm)-v(\gamma) \big|^+|\Cpm|\Big] 
  \leq \mathcal{O}(|\gamma|\cdot |\beta_j|\cdot|\Cpm|).
\end{align*}
The required a priori condition is again of the form $\epsilon(t) = \sum|\gamma| \leq \mathcal{O}(1)$.

We now study the case when $C = \CUp$ and we begin with the  estimates.
$$
         |\CUp|=|\CUp'|+\mathcal{O}(|\beta_j|\cdot|\CUp|), \qquad (\CUp)=v(\CUp') +\mathcal{O}(|\beta_j|\cdot|\CUp|).
$$
During this interaction, 
$$
    [Q_1] = - \big| v(\beta_j) - v(\CUp) \big| \cdot |\CUp| \cdot |\beta_j| = - \big| \mathcal{O}(|\beta_j|\cdot|\CUp|) \big|.
$$    
The same proofs as in the last two lemmas show that when an a priori
condition $\epsilon(t) \leq \mathcal{O}(1)$ holds, then
$$
   [Q] = [Q_1] + [Q_2] = - \big| \mathcal{O}(|\beta_j|\cdot|\CUp|) \big|. \qedhere
$$
\end{proof}


\subsection{Global estimates}
\label{sec:globalestimates}

We now possess all the required estimates for interactions within splitting-merging solutions. We need to find a constant $K$
for which $\myW + K Q$ is decreasing during all interactions. We now reinterpret Theorem  \ref{mainresult} and state
the a priori conditions on the perturbation $\epsilon(t)$ and on the constants $k_*^{\lessgtr}, K$  
required in the proof.

\subsubsection*{Constraints on the weights in $W$}

The key is to choose the constants $K, k_*$ and $k_*^{\lessgtr}$ in such a way that waves crossing through the strong shocks
lead to decreases in $\myW+KQ$. The analysis of the various interactions, to be presented
immediately below, will lead to the following constraints on these constants.

\begin{itemize}
\item[(W1)] The constants $\kappaL, \kappaM$ and $\kappaR$ must satisfy
\begin{align}   
                  \kappaL > & \, (1+\Cff )\kappaM. \label{bound:k1d} 
\end{align}

\item[(W2) ] The weights $\kappaLless, \kappaMless$ and $\kappaRless$ must satisfy
\begin{equation}   
           \kappaLless <   \kappaMless <   \kappaRless, \label{bound:k2a} 
\end{equation}
\item[(W3) ] The weights $\kappaLgrt, \kappaMgrt$ and $\kappaRgrt$ must satisfy
\begin{equation}   
                  \kappaLgrt >   \kappaMgrt >  \kappaRgrt.
                   \label{bound:k3b} 
\end{equation}
\end{itemize}
The validity of the following lemma and the proposed values for the constants can be checked by inspection.  

\begin{lemma}  \label{lem:weights}
Assuming only property (H4)  of the kinetic function $\Phi^{\flat}_i$, the conditions on the weights
 \eqref{bound:k1d}-\eqref{bound:k3b} can be satisfied independently of the size $\epsilon(t)$ 
 of the perturbation, of the strength of the nonclassical shock and of the constant $K$. 
 For each $\zeta>0$, one such a choice is given by 
\be
\begin{array}{ccc}
             \kappaL =  1+\Cff +\zeta,  &  \kappaM = 1,                   & \kappaR = 1, \\
             \kappaLless =  1-\zeta,      & \kappaMless = 1,              & \kappaRless = 1+\zeta, \\
            \kappaLgrt = 1+\zeta,        &  \kappaMgrt= 1,               & \kappaRgrt = 1-\zeta. 
\end{array}            
\ee
\end{lemma}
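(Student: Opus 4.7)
The plan is essentially direct verification, because (W1)--(W3) turn out to be a decoupled list of strict scalar inequalities on the nine positive weights, involving only the contraction constant $\Cff\in(0,1)$ furnished by hypothesis (H4). First I would observe that none of $\epsilon(t)$, the strength of the nonclassical shock, or the interaction-potential constant $K$ appears anywhere in (W1)--(W3). This structural observation already delivers the uniformity part of the lemma: any nine-tuple that satisfies the scalar inequalities will continue to satisfy them when $\epsilon(t)$, $K$, or the shock strength are subsequently fixed in the global-in-time argument. Consequently the choice of weights in $\myW$ can be made once and for all, independently of the later calibration of $K$ and of the smallness parameter for the perturbation.

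The second step is to substitute the proposed one-parameter family into each condition. For (W1) I would compute
\[
    \kappaL - (1+\Cff)\kappaM \;=\; (1+\Cff+\zeta) - (1+\Cff)\cdot 1 \;=\; \zeta \;>\; 0.
\]
For (W2) the assignment $\kappaLless = 1-\zeta$, $\kappaMless = 1$, $\kappaRless = 1+\zeta$ gives the strict chain
\[
    \kappaLless \;<\; \kappaMless \;<\; \kappaRless
\]
by inspection, and the analogous substitution $\kappaLgrt = 1+\zeta$, $\kappaMgrt = 1$, $\kappaRgrt = 1-\zeta$ yields
\[
    \kappaLgrt \;>\; \kappaMgrt \;>\; \kappaRgrt,
\]
which is (W3). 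Since the three triples appear in disjoint constraints, no cross-compatibility has to be checked.

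The only subtlety to mention is positivity: the functionals $V_L,V_M,V_R$ require each weight to be strictly positive, which forces the restriction $\zeta\in(0,1)$ so that $\kappaLless = \kappaRgrt = 1-\zeta > 0$; the remaining weights are then automatically positive. There is no genuine obstacle in the proof --- the lemma is deliberately an accounting statement whose purpose is to isolate the \emph{parameter-free} selection of weights from the harder task, carried out later in Section~\ref{sec:globalestimates}, of choosing $K$ large enough and $\epsilon(t)$ small enough so that $\myW + KQ$ actually decreases across each of the interactions analyzed in Lemmas~\ref{case:1}--\ref{case:8} and Lemmas~\ref{Qcase:1}--\ref{Qcase:7}. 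The content of the present lemma is precisely that this two-stage decoupling is legitimate.
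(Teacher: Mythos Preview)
Your proposal is correct and follows the same approach as the paper, which simply states that ``the validity of the following lemma and the proposed values for the constants can be checked by inspection.'' You have written out that inspection explicitly, including the useful remark that positivity of all weights requires $\zeta\in(0,1)$.
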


\subsubsection*{Constraints on the perturbation}

We present conditions that the perturbation $\epsilon(t)$ must satisfy at all times $t$: 

\begin{itemize}
\item[(P1)] Lemmas \ref{Qcase:1}-\ref{Qcase:4} require the following constraints  
\begin{align} 
          \epsilon(t) \leq & \, \mathcal{O}\big( |\Cpm|  \big),  \label{perturbation:bound1} \\     
          \epsilon(t) \leq & \, \mathcal{O}\big( |\CUp|  \big),  \label{perturbation:bound2} \\  
          \epsilon(t) \leq & \, \mathcal{O}\big( |\Npm|  \big).  \label{perturbation:bound3}   
\end{align}
Note that Lemmas \ref{Qcase:1} and \ref{Qcase:3} require two different bounds of the form \eqref{perturbation:bound1}.

\item[(P2)] Lemmas \ref{Qcase:5}-\ref{Qcase:7} require three different bounds of the form
\be
             \epsilon(t) \leq  \mathcal{O}\big( 1  \big).  \label{perturbation:bound4}
\ee
\end{itemize}
The functions involved in the $\mathcal{O}$ bounds \eqref{perturbation:bound1}-\eqref{perturbation:bound4}
are all the result of quadratic terms in Glimm's interaction estimates, cf.~Theorem \ref{thm:Glimm_interaction_estimates}. 
%

\subsubsection*{Constraints on the weights in $W+KQ$}

The following fundamental constraints are required to insure that decreases in the
interaction potential compensate for the introduction of spurious second-order waves during each interaction.
The analysis of the various interactions, to be presented
immediately below, will lead to the following constraints on these constants.

\begin{itemize}
\item[(Q1) ] The weight $K$ on the interaction potential must satisfy
\begin{align}   
    \kappaLless + \kappaRgrt                  < & \, K \big| \mathcal{O}(|\Cpm|) \big|, \label{bound_case1a} \\
    \kappaMless + \kappaRgrt                 < & \,  K \big| \mathcal{O}(|\CUp|) \big|, \label{bound_case1b} \\
    \kappaM + \kappaLless + \kappaMgrt < & \, K \big| \mathcal{O}(|\Npm|) \big|, \label{bound_case1c} \\
     \kappaLless + \kappaRgrt                 < & \,  K \big| \mathcal{O}(1) \big|, \label{bound_case1d} \\
    \kappaM + \kappaLless + \kappaMgrt < & \, K \big| \mathcal{O}(1) \big|, \label{bound_case1e} \\
     \kappaMless + \kappaRgrt                < & \, K \big| \mathcal{O}(1) \big|. \label{bound_case1f} 
\end{align}
\end{itemize}

We now summarize our earlier estimates by combining, case by case, the estimates for the total variation, Lemmas 
\ref{case:1}-\ref{case:7}, with the estimates for the interaction potential, Lemmas \ref{Qcase:1}-\ref{Qcase:7}.
Each time, we will indicate which assumptions on the weights (W1)-(W3), the perturbation (P1)-(P2) and on the interaction 
potential (Q1). are required to show that $[\myW + KQ]$ is negative. We have already showed in Lemma \ref{lem:weights}
that the constraints on the weights can be satisfied. As should be clear, for the moment we have not demonstrated
that $\epsilon(t)$, defined in \eqref{defn:pertubation_wrt_t},  is a priori bounded in time. The estimates summarized 
below are only valid at the time $t$ of a fixed interaction, and show that for the strength $\epsilon(t)$ of the perturbation at that time,
the constants can be adjusted to insure that $\myW+KQ$ will decrease during  that interaction.
We will eventually have to show that the bounds on the perturbation can be satisfied uniformly in time.

\begin{flushleft}
{\bf Case 1:} ($\alpha_i \Cpm  \rightarrow \Npm \CUp$ or $\Cpm \alpha_i \rightarrow \Npm \CUp$).
In Lemma \ref{case:1}, we showed that $\alpha_i$ is faster than $\Cpm$, then 
$$
       [ \myW ] =  |\alpha_i | \Big( -\kappaL 
                            + \big(   \kappaLless +  \kappaRgrt \big) \mathcal{O}(\Cpm) \Big),
$$   
while Lemma \ref{Qcase:1} has that $[Q] = - | \mathcal{O}(|\alpha_i||\Cpm|^2) |$  
when \eqref{perturbation:bound1} is imposed. Therefore,
if the constraint  \eqref{bound_case1a} is satisfied, then $\myW + KQ$ is decreasing.
Similarly, Lemma \ref{case:1} showed that when $\alpha_i$ is slower than $\Cpm$, then
$$
       [ \myW ] =  |\alpha_i | \Big( -\kappaR 
                            + \big(   \kappaLless +  \kappaRgrt \big) \mathcal{O}(\Cpm) \Big)
$$       
Assuming that the perturbation satisfies an a priori bound \eqref{perturbation:bound1}, 
then $[Q] = - | \mathcal{O}(|\alpha_i||\Cpm|^2) |$.
For weights satisfying  \eqref{bound_case1a}, then $[\myW+KQ] $ is negative.
\end{flushleft}

\begin{flushleft}
{\bf Case 2:} ($ \Npm \CUp \rightarrow \Cpm$).
For the strong interaction $ \Npm \CUp \rightarrow \Cpm$ Lemma \ref{case:2} stated that 
\begin{align*}
      [ \myW ] = & \,\,   (  \kappaLless +  \kappaRgrt ) \mathcal{O}( | \Npm|  |\CUp| |v(\Npm) - v(\CUp)| ).
\end{align*} 
Lemma \ref{Qcase:2} states that if the inequality \eqref{perturbation:bound2} is
enforced, then $[Q]$ is negative but proportional to the same quantity, and therefore 
$$
   [\myW + KQ] = \big( \kappaLless +  \kappaRgrt -K | \mathcal{O}(1) | \big) 
                   \Big| \mathcal{O}\big( | \Npm| |\CUp| |v(\Npm) - v(\CUp)| \big) \Big|.
$$
is negative if $K$ is chosen to satisfy an a priori bound of the form \eqref{bound_case1d}. 
\end{flushleft}

\begin{flushleft}
{\bf Case 3:} ($\alpha_i C  \rightarrow {C}'$ or $C \alpha_i \rightarrow {C}'$).  According to Lemma \ref{case:3}, we have that 
if the weak wave crosses the classical strong shock $C=\Cpm$  from the left, then
\begin{align*}
      [ \myW ] = & \,\,  |\alpha_i | \Big( -\kappaL 
                                    + (  \kappaLless +  \kappaRgrt ) \mathcal{O}(|\Cpm|) \Big).
\end{align*}
Lemma \ref{Qcase:3} shows that if \eqref{perturbation:bound1} holds, then 
$[Q] = - | \mathcal{O}(|\alpha_i||\Cpm|^2)|$.
Therefore, if the constraint \eqref{bound_case1a} is satisfied, then
the  variation  $[\myW+KQ] \leq -\kappaL |\alpha_i| $ is strictly negative.

If the weak wave $\alpha_i$ crosses $C = \Cpm$ from the right, then Lemma \ref{case:3}
states that the change in $\myW$  is
\begin{align*}
      [ \myW ] =  & \,\,  |\alpha_i | \Big( -\kappaR 
               + (  \kappaLless +  \kappaRgrt ) \mathcal{O}(|\Cpm|) \Big).
\end{align*}
Again, $[Q] = - \big| \mathcal{O}(|\alpha_i| |\Cpm|^2) \big|$ when \eqref{perturbation:bound1} holds, 
and so 
\begin{equation}  \label{Gcase:3}
    [\myW+KQ](t)  \leq -\kappaR |\alpha_i| 
\end{equation}    
when the bound \eqref{bound_case1a} holds.

Suppose now that $C = \CUp$ is a classical shock. If $\alpha_i$ crosses $\CUp$ from the left, then lemmas \ref{case:3} and
\ref{Qcase:3} state 
\begin{align*}
     [\myW] = & \, |\alpha_i| \big( -\kappaM  
                                                 + (   \kappaMless +  \kappaRgrt )  \mathcal{O}(|\CUp|) \big), \\
     [Q] = & \, - \big| \mathcal{O}(|\alpha_i||\CUp|^2)  \big|.
\end{align*}
If the constraint  \eqref{bound_case1b} is satisfied a priori, 
then $[\myW + KQ] \leq -\kappaM |\alpha_i|$ is negative.

Similarly, if $\alpha_i$ comes from the right, then 
$$
        [\myW] =  |\alpha_i| \big( -\kappaR 
                                                + (   \kappaMless +  \kappaRgrt )  \mathcal{O}(|\CUp|) \big).
$$
Using the estimate for $[Q]$ from Lemma \ref{Qcase:3} and the a priori condition
 \eqref{bound_case1b}, we conclude that $[\myW + KQ] \leq -\kappaR |\alpha_i|$ is strictly decreasing.
\end{flushleft}

\begin{flushleft}
{\bf Case 4:} ($\alpha_i \Npm  \rightarrow {\Npm}' \alpha_i'$).
According to Lemma \ref{case:4} and Lemma \ref{Qcase:4}
\begin{align*}
 [\myW] = & \,  |\alpha_i| \big(  - (\kappaL - \operatorname{sign}(\alpha_i)  \kappaM ) 
                   + \Cff \kappaM  
                    + (  \kappaM + \kappaLless +  \kappaMgrt )  \mathcal{O}(|\Npm|) \big),\\
 [Q] = & \, - \big|  \mathcal{O}(|\alpha_i| |\Npm|^2)  \big|,               
\end{align*}
where the last bound required \eqref{perturbation:bound3}.
We impose the bound \eqref{bound:k1d} and the constraint \eqref{bound_case1c}
on $K$ and these imply that 
\begin{equation}  \label{Gcase:4}
     [\myW+ KQ] \leq \big( -\kappaL + (1+C_i) \kappaM \big) \, |\alpha_i |.
\end{equation}     
\end{flushleft}

\begin{flushleft}
{\bf Case 5:} ($\beta_j \Cpm  \rightarrow {\Npm}' {\CUp}'$ or $\Cpm \beta_j \rightarrow {\Npm}' {\CUp}'$). 
Lemma \ref{case:6} and Lemma \ref{Qcase:6} states that when 
 a weak wave $\beta_j$, with $j > i$, approaches $\Cpm$ from the left, and if the perturbation
 satisfies \eqref{perturbation:bound4}, then
\begin{align*}
      [ \myW ] =  & \, |\beta_j  | \Big( -\kappaLgrt  + \kappaRgrt 
                                 +  (  \kappaLless +  \kappaRgrt ) \mathcal{O}(|\Cpm|) \Big),\\
        [Q] = & \, - \big| \mathcal{O}( |\beta_j||\Cpm|) \big|.                         
\end{align*} 
The linear terms in $[W]$ must control the possible increase in $[Q]$ and so we balance the terms as
\begin{align*}
 [\myW + KQ] = & \, |\beta_j| \Big\{  - \kappaLgrt +  \kappaRgrt    
                      + \big(  \kappaLless + \kappaRgrt   - K | \mathcal{O}(1)| \big) \mathcal{O}(|\Cpm|)\Big\}.
\end{align*}
We impose the zero-th order bound \eqref{bound:k3b} the a priori condition \eqref{bound_case1a} 
so that $\myW+KQ$ becomes decreasing.

Similarly, if $\beta_j$ is a weak wave approaching $\Cpm$ from the right and the perturbation
is small in the sense of \eqref{perturbation:bound4}, then 
\begin{align*}
 [\myW + KQ] = & \, |\beta_j| \Big\{  - \kappaRless + \kappaLless  
                                     + \big( \kappaLless +  \kappaRgrt   -  K \mathcal{O}(1) \big) 
                                              \mathcal{O}(|\Cpm|)\Big\}.
\end{align*}
The conditions and \eqref{bound:k2a} and  \eqref{bound_case1d} are sufficient to make the decreasing functional.
\end{flushleft}

\begin{flushleft}
{\bf Case 6:} ($\beta_j \Npm \rightarrow \Npm' \beta_j'$ or $\Npm \beta_j \rightarrow \Npm'  \beta_j'$).
Consider first the interaction with $\beta_j$ initially to the left of $\Npm$. Lemmas \ref{case:7} 
and \ref{Qcase:7} state that with the condition \eqref{perturbation:bound4}, then
\begin{align*}
      [ \myW ] =  & \, |\beta_j  | \Big( -\kappaLgrt + \kappaMgrt  
                                 +  ( \kappaM + \kappaLless +  \kappaMgrt ) \mathcal{O}(|\Npm|) \Big),\\
        [Q] = & \, - \big| \mathcal{O}( |\beta_j||\Npm|) \big|.                         
\end{align*} 
To balance these terms, we must examine
\begin{align*}
 [\myW + KQ] = & \, |\beta_j| \Big\{ -\kappaLgrt + \kappaMgrt   
                                 +  \big( \kappaM +  \kappaLless + \kappaMgrt  - K |\mathcal{O}(1)| \big) \mathcal{O}(|\Npm|) 
                                     \Big\}.
\end{align*}
This functional is decreasing if we further assume that conditions \eqref{bound:k3b} and 
\eqref{bound_case1e} holds, and the exact decrease is
\begin{equation}  \label{Gcase:6a}
     [\myW+KQ ] \leq \big( -\kappaLgrt + \kappaMgrt \big)  |\beta_j|.
\end{equation}

Similarly, when $\beta_j$ is initially to the right of $\Npm$, then we impose
the bound \eqref{perturbation:bound4} to deduce
$$
    [\myW + KQ] =  |\beta_j| \Big\{ -\kappaMless + \kappaLless 
                                 +  (  \kappaM + \kappaLless  + \kappaMgrt - K |\mathcal{O}(1)| ) \mathcal{O}(|\Npm|) 
                                     \Big\} \, 
$$
and then we conclude that this is negative if conditions \eqref{bound:k2a} and \eqref{bound_case1e} holds.
In fact, the amount of decrease is
\begin{equation}  \label{Gcase:6b}
     [\myW+KQ ] \leq \big( -\kappaMless + \kappaLless \big)  |\beta_j|.
\end{equation}
\end{flushleft}
 
\begin{flushleft}
{\bf Case 7:} ($\beta_j C \rightarrow C' \beta_j'$ or $C \beta_j \rightarrow \beta_j' C'$). 
When the weak wave $\beta_j$ is incoming from the left and $C = \Cpm$, then using lemmas \ref{case:7}
and \ref{Qcase:7} under the assumption \eqref{perturbation:bound4}, we find
\begin{align*}
 [\myW + KQ] = & \, |\beta_j| \Big\{ -\kappaLgrt + \kappaRgrt  
                                 +  \big(  \kappaLless + \kappaRgrt - K|\mathcal{O}(1)| \big) \mathcal{O}(|\Cpm|) 
                                   \Big\}  \leq -\kappaLgrt |\beta_j|. 
\end{align*}
 which requires the inequality \eqref{bound:k3b} and the a priori condition \eqref{bound_case1d} 
 in order to imply that it is strictly negative. 
 
 Similarly, for right incoming weak waves and $C = \Cpm$ and \eqref{perturbation:bound4}, we have
 \begin{align*}
 [\myW + KQ] = & \, |\beta_j| \Big\{ -\kappaRless + \kappaLless  
                                 +  \big( \kappaLless + \kappaRgrt - K|\mathcal{O}(1)| \big) \mathcal{O}(|\Cpm|) 
                                    \Big\}.
\end{align*}
This quantity is negative if the additional conditions \eqref{bound:k2a} and \eqref{bound_case1d} holds,
in which case the decrease is
\begin{equation*}   
      [\myW +KQ ] \leq ( -\kappaRless + \kappaMless ) |\beta_j|.
\end{equation*}

The last two interactions covered by Case 7. involve either a left incoming $\beta_j$ or a right
incoming $\beta_j$ interacting with $C = \CUp$, in which case the a priori bound \eqref{perturbation:bound4} implies respectively
\begin{equation}  \label{Gcase:7a}
 [\myW + KQ] =  |\beta_j| \Big\{ -\kappaMgrt + \kappaRgrt 
                                 +  \big(  \kappaMless + \kappaRgrt - K|\mathcal{O}(1)| \big) \mathcal{O}(|\CUp|) 
                                     \Big\} \leq (-\kappaMgrt + \kappaRgrt) |\beta_j|. 
\end{equation}
or
\begin{equation}  \label{Gcase:7b}
 [\myW + KQ] =  |\beta_j| \Big\{ -\kappaRless + \kappaMless   
                                 +  \big(  \kappaMless   + \kappaRgrt - K |\mathcal{O}(1)| \big) \mathcal{O}(|\CUp|) 
                                     \Big\} \leq ( -\kappaRless + \kappaMless ) |\beta_j| 
\end{equation}
The functional $\myW + KQ$ is strictly decreasing, as written above, if the bounds \eqref{bound:k3b}, \eqref{bound:k2a} and \eqref{bound_case1f}
are satisfied.
\end{flushleft}

\begin{proof} [Proof of Theorem \ref{mainresult}]
The remaining steps in the proof of existence of a weak solution using front-tracking approximations
are similar in every published proof \cite{Risebro-FT,Bressan-FT} and we will
only sketch the final argument. The key step is to demonstrate that the family of
front-tracking approximations, parameterized by $h$, possesses
a convergent subsequence that converges as $h \to 0$. The proof that the limit
is a weak solution is straightforward and will be omitted. Recall that $h$ is
a bound on the strength of rarefaction-shocks, on the errors in the propagation
speeds of those discontinuities, and on the approximation of the initial data. The
bound  $\eps_0$ on the size of the initial perturbation \eqref{3.4} is not to be confused with the parameter $h$.

We begin by choosing the state $u_* \in \Bone$ so that $\Phiis(u_*)$ and $\Phiif(u_*)$
are also in $\Bone$. The euclidean distance in phase-space is equivalent to
the generalized shock strength, hence there exists $\kappa_1$ for which the bound 
$$
          \| v_0 \|_{L^{\infty}} \leq \kappa_1 \operatorname{TV}(v_0) < \kappa_1 \eps_0
$$
can be used to ensure that all states in $u_0 = \overline{u}_0 + v_0$
belong to $\Bone$. To simplify the notation, the function $\eps(t)$ will denote
the size of the perturbation in $\operatorname{TV}$, that is the size of
the splitting-merging solution after subtracting the strong waves.

The analysis in Section \ref{sec:globalestimates} has shown that under certain conditions, the functional  
$\myW+KQ$ is decreasing. We need to establish that this functional is equivalent
to the quasi-norm $\operatorname{TV}$ for values of $\eps_0$ and $K$ that 
are compatible with the constraints. Clearly, there exists $\kappa_2$ such that
$$
         \eps(t) \leq \kappa_2 W(u(t)) 
                     \leq \kappa_2 \big(W+KQ \big) \big( u(t) \big),
$$
but the existence of a $\kappa_3$ for which the inverse inequality holds,
$$
        \kappa_3 \big(W+KQ \big) \big( u(t) \big) \leq  \eps(t),
$$
requires an a priori bound of the form $K W \leq \kappa_4$ for some
constant $\kappa_4$. 
The constant $K$ must satisfy the a priori
lower bounds (Q1), namely \eqref{bound_case1a}-\eqref{bound_case1f}, but since Lemma 
\ref{lem:weights} shows that for $\zeta < 1/2$ the lower bounds are all
at most $3$, these bounds can be rewritten as
\be   \label{eq:constraintK}
        3 \leq K \min  \Big\{ \big\| \mathcal{O}(1) \big\|_\infty,  \big\| \mathcal{O}\big( |\Cpm| \big)  \big\|_\infty, 
                 \big\| \mathcal{O}\big( |\Npm|)  \big\|_\infty,  \big\| \mathcal{O}\big( |\CDown| \big)  \big\|_\infty  \Big\},
\ee
where each of the uniform norms are taken over $\Bone$.
Assume that $K$ is sufficiently large to satisfy this constraint and then
impose that $\epsilon_0$ is sufficiently small that there exists a 
constant $\kappa_4$ for which the constraint $K W(0) \leq \kappa_4$ is satisfied initially. 
On the other hand, constraints (P1)  and (P2)  also require the total variation of the perturbation 
to decrease as the size of the strong waves decreases,
\be   \label{eq:constraintP1}
        \eps(t) \leq  \min  \Big\{ \big\| \mathcal{O}(1) \big\|_\infty,  \big\| \mathcal{O}\big( |\Cpm| \big)  \big\|_\infty, 
                 \big\| \mathcal{O}\big( |\Npm|)  \big\|_\infty,  \big\| \mathcal{O}\big( |\CDown| \big)  \big\|_\infty  \Big\}.
\ee
This upper bound on $\epsilon(t)$ can be satisfied simultaneously with the upper
bound  $W \leq \kappa_4 / K$. Under these conditions, because $\myW$ will remain uniformly
bounded above in time, the equivalence between $\operatorname{TV}$ and $W+KQ$ 
will hold for all time.

Assume that the state $u_*$ is fixed and thus, the strength of the strong waves
are only small perturbations of the initial configuration $\overline{u}_0$.
Under these conditions, $(W+KQ)(u(t))$ is monotone decreasing and therefore bounded until
the approximation leads to states outside $\Bone$. Since $(W+KQ)(u(t))$ is equivalent to 
$\operatorname{TV}(u(t) - \overline{u}_0)$, at the cost of further restricting
$\operatorname{TV}(v_0)$, the boundedness of $W+KQ$ implies that
the approximation cannot grow in $L^{\infty}$ outise of $\Bone$. Glimm's functional is therefore bounded
for all time. The front-tracking approximations are thus continuous with respect to time in
$L^1(\mathbb{R})$, and uniformly bounded in $\operatorname{TV}$ and $L^{\infty}$.
Helley's Theorem therefore provides a subsequence $\{ h_k \}_k$ converging to zero
for which $u_{h_k}$ converges in $L^1_{\text{loc}} \cap \operatorname{BV}$.  
\end{proof}

\begin{proof} [Proof of Theorem \ref{mainresult_small}]
The proof is almost identical to the earlier proof, except in this case we need to 
propose an explicit relation between the strength of the perturbation
and the strength of the strong shocks.

Consider now the case where the initial state $u_*^{(k)}$ belongs to a sequence in
$\Bone$ that converges to a single point on the manifold $\mathcal{M}_i$ where
the wave speed $\lambda_i$ has a critical point. For each index $k$, we may assume that
the states $\Phiis(u_*^{(k)})$ and $\Phiif(u_*^{(k)})$ are also in $\Bone$. The 
uniform $\mathcal{O}(1)$ bounds for $K$ and the total variation continue to hold.

In the proof of Theorem \ref{mainresult}, we saw that as the strength of the strong shock 
decreases, then the strength of the perturbation must decrease while the weight $K$ must increase.
More precisely, suppose that the constant $K$ satisfies exactly the constraint \eqref{eq:constraintK}, that is 
$$
       K   = \frac{3}{F_1\big(|\Cpm|, |\Npm|, |\CDown| \big)},
$$
for a function $F_1$ linear with respect to the wave strengths. We remark that 
the strength of these strong waves depend only on the left-hand state 
of $\Cpm$, or $\Npm$, and these strengths are all uniform ratios of each other
since $\Phiis$ and $\Phiif$ are Lipschitz.
On the other hand, the condition \eqref{eq:constraintP1} requires that
$$
    \eps(t) \leq F_2\big(|\Cpm|, |\Npm|, |\CDown| \big),
$$
for some function $F_2$ linear in the wave strengths.
Taking \eqref{eq:constraintK} and \eqref{eq:constraintP1} together, we have
$$
       K W \leq    K \cdot \frac{\eps(t)}{\kappa_3} = \frac{3}{\kappa_3} 
              \cdot \frac{F_2(|\Cpm|, |\Npm|, |\CDown| )}{F_1 (|\Cpm|, |\Npm|, |\CDown|)}.
$$
By uniform continuity over $\Bone$, the ratio $F_2/F_1$ is uniformly bounded and hence there
exists a constant $\kappa_4$ for which $KW \leq \kappa_4$. The 
equivalence between $\operatorname{TV}$ and $W+KQ$ is therefore 
uniform with respect to the vanishing strengths of the strong waves.

Constraints (P1)  are summarized by the condition \eqref{eq:constraintP1}
which, because the strengths of the strong waves are proportional to each other,
can be replaced by a simpler inequality
$$
       \eps_0 \leq \kappa_*  | \Npm | .
$$
The rest of the proof is the same as the one presented for Theorem \ref{mainresult}. 
\end{proof}

\section{Splitting-merging solutions with nucleation}
\label{sec:nucleation}

\subsection{Preliminaries}

The objective of this section is to show that splitting-merging solutions can only undergo a finite number
of splitting-merging cycles when the creation of nonclassical waves requires the more demanding
nucleation condition; see Definition \ref{state}. This is a generalization of an earlier result by LeFloch
and Shearer in the scalar case \cite{LeFlochShearer}.

The proof does not follow the techniques of the earlier work \cite{LeFlochShearer},
which nonetheless inspired preliminary work of two of the authors on the total variation for scalar laws \cite{LL},
and hence the current definition of wave strength. The key step of the proof presented here
is an extension of Lemma  5.2
from \cite{LL} which translates the nucleation condition into a lower bound on the
signed variation of the waves crossing $\Npm$ and $\CUp$ during a single splitting-merging
cycle. The proof therefore requires some careful accounting of all the waves which
interact with, and ultimately remove, the nonclassical wave.

This section begins with a description of the geometry and processes involved in a single
splitting-merging cycle. The extension of Lemma 5.2 from \cite{LeFlochShearer} is then presented. The
signed variation is introduced and related to the previous bound. We conclude the result by
demonstrating that the total variation decreases during each splitting-merging 
by an amount proportional to the strength of the nucleation condition 
cycle, and therefore, that a splitting-merging solution for a finite perturbation can only
undergo a finite number of splitting-merging cycles.

\subsection{Nucleation and the splitting-merging cycle}

 This section contains a definition of nucleation, a statement of the main theorem, 
 and a detailed description of the
 waves and interactions one can expect to see during a splitting-merging cycle.

The nucleation condition and its associated Riemann solver were defined in Section \ref{Sec2}, and
in particular in Lemma \ref{thm:solver_nucleation}. For the sake of convenience, we repeat 
its definition. The nucleation condition assumes that there exists a $C^2$ function 
$\mu_i^n : \Bone \longrightarrow \mathbb{R}$ satisfying
$$
    \mu_i^{\natural}(u) \leq \mu_i^n(u) \leq \mu_i^{\sharp}(u), \quad \text{ for all } u \in \Bone.
$$
The nucleation criteria states that $i$-classical shocks are preferred over nonclassical shocks if
\be  \label{nucl:nucleation_condition}
        \big|  \mu_i(u_-) - \mu_i(u_+) \big| <  \big|  \mu_i(u_-) - \mu_i^n(u_-) \big|.
\ee
Lemma \ref{thm:solver_nucleation} describes the resulting Riemann solver. Furthermore,
we will assume that $\mu_i^n(u_-) < \mu_i^{\sharp}(u_-)$ uniformly over $\Bone$,
that is to say that there exists a strictly positive constant $\eta$  such that
$$
                   \eta <   \big| \mu_i^{\sharp}(u) - \mu^n_i(u) \big|, 
$$
for all states $u$ that are perturbations of the left-hand state $u_*$ in the 
splitting-merging solution. We are now in a position to state the main result of this section.


\begin{theorem}[Finiteness of splitting--merging patterns under nucleation]\label{nucleationresult} 
Given any strictly hyperbolic system \eqref{11} with genuinely nonlinear, linearly degenerate, or concave-convex 
characteristic families, together with
a nonclassical Riemann solver satisfying a nucleation condition, there exist  constants $\kappa_* $ 
and $\delta_* < \delta_0$ depending only upon the flux function and the
constant $\Cff$ arising in (H4), so that the following property holds: for every family of kinetic mappings $\Phif_i$ 
associated with concave-convex families (indexed by $i$), provided they satisfy 
the conditions (H1)--(H4), any three state pattern 
$\big( u_*, \Phif_i(u_*), \Phis_i(u_*) \big) \in \mathcal{B}(\delta_*)^3$ can only
undergo a finite number of splitting-merging cycles, that is,  
for every perturbation with total variation at most $\kappa_* |\sigma(u_*,\Phif_i(u_*))|$
there exists a global--in--time solution $u=u(t,x)$ to \eqref{11}  
satisfying the initial condition \eqref{3.2} and containing only a finite
number of interactions $\Npm \CUp \longrightarrow \Cpm$. 
\end{theorem}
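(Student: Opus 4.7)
The plan is to show that, under the uniform nucleation gap $\eta = \inf |\mu_i^\sharp - \mu_i^n| > 0$, each splitting-merging cycle forces a strict decrease of $\myW + KQ$ by at least $c\eta$ for some uniform $c>0$. Combined with the uniform upper bound $(\myW + KQ)(0) \leq C\eps_0$ established in the proof of Theorem~\ref{mainresult}, this yields at most $C\eps_0/(c\eta)$ complete cycles, which is exactly the desired finiteness statement.

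First, I would pin down the reference coordinate tracked through a single cycle. Immediately after a merging $\Npm \CUp \to \Cpm$, the newly-formed classical shock connects states $u_-$ and $u_+$ with $\mu_i(u_+) = \mu_i^\sharp(u_-) + \mathcal{O}(|\Npm|\cdot|\CUp|)$, since the intermediate state $u_-^\flat$ lies on the kinetic curve by the construction in Theorem~\ref{thm:solver_nucleation}. For the subsequent splitting $\Cpm \to \Npm' {\CUp}'$ to occur, the nucleation criterion~\eqref{nucl:nucleation_condition} demands that the pre-splitting jump satisfy $\mu_i(u_+) \leq \mu_i^n(u_-)$. Together with the uniform gap hypothesis $\mu_i^\sharp(u) - \mu_i^n(u) \geq \eta$, this forces the scalar $\mu_i^\sharp(u_-) - \mu_i(u_+)$ to grow by at least $\eta$ (up to errors of order $\eps_0$ coming from the slow drift of the left state $u_-$) between the merging time and the next splitting time.

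Next, I would translate this $\eta$-sized coordinate change into a lower bound on the total signed strength of weak waves that cross $\Cpm$ during the cycle. By the Glimm-type estimates of Theorem~\ref{thm:Glimm_interaction_estimates} applied to Cases~3 and~7, each weak $i$-wave crossing $\Cpm$ alters $\mu_i(u_\pm)$ linearly in its signed strength (via the expansion~\eqref{riemannsolver:Taylor}), while weak waves in the other families $j \neq i$ contribute only at the quadratic level. Summing these first-order contributions along the lifetime of $\Cpm$ yields a telescoping identity whose absolute value is bounded below by $\eta - \mathcal{O}(\eps_0)$; shrinking $\eps_0$ further than in Theorem~\ref{mainresult} if necessary, the total signed strength of $i$-waves crossing $\Cpm$ during a single cycle is at least $\eta/2$. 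This is the systems analog of Lemma~5.2 of~\cite{LL}, and I expect it to be the principal technical obstacle: one must carefully distinguish signed from unsigned variations, and verify that oscillatory patterns of weak waves which temporarily push $\mu_i(u_+)$ away from and back toward $\mu_i^\sharp(u_-)$ actually cost at least as much unsigned variation as the net displacement.

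Finally, invoking the case-by-case estimates \eqref{Gcase:3}, \eqref{Gcase:7a}, and \eqref{Gcase:7b} from Section~\ref{sec:globalestimates}, every weak wave crossing a strong classical shock reduces $\myW + KQ$ by at least a uniform constant $c>0$ times its strength, where $c$ depends only on the weight choices guaranteed by Lemma~\ref{lem:weights}. Summing this linear decrease over all the crossings in a single cycle yields $[\myW + KQ]_{\text{cycle}} \leq -c\eta/2$. Since $\myW + KQ$ is nonnegative and is bounded above initially, the number of complete cycles is bounded by $2(\myW + KQ)(0)/(c\eta)$, which is finite and independent of the front-tracking parameter $h$. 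Passing to the limit $h \to 0$ as in the proof of Theorem~\ref{mainresult} then produces the desired global-in-time entropy solution with only finitely many merging interactions $\Npm \CUp \to \Cpm$.
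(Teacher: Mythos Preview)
Your approach is correct and leads to the same conclusion, but it is organized around the opposite half of the cycle from the paper's proof. The paper tracks the phase when \emph{both} $\Npm$ and $\CUp$ are present (from splitting at $t_0$ to merging at $t_f$): Lemma~\ref{lem:condition-sm} combines the nucleation inequality at $t_0$ with the speed-ordering inequality at $t_f$ to obtain $\eta < [\mu_i(\Phi_i^\sharp(u_\ell))]_{t_f-}^{t_0+} + [\mu_i(u_r)]_{t_0+}^{t_f-}$, then Lemma~\ref{lem:var_states_sm} bounds this by the total strength of weak waves entering the region $\Gamma$ between the two big shocks, and the main proof exploits the crossing estimates \eqref{crossing_est1}--\eqref{crossing_est3} together with (H4) to collapse the double-shock bookkeeping into a clean bound $[\myW+KQ]\leq -c\eta$. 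You instead track the phase when only $\Cpm$ is present (from merging to the next splitting), so there is no intermediate region and the accounting reduces to waves absorbed by a single strong shock via Cases~3 and~7. This buys simplicity: no need for the $\Gamma$-crossing lemma or the explicit cancellation $(-\kappaL+(1+\Cff)\kappaM)-\kappaM\Cff$ that the paper performs. The paper's route, on the other hand, makes the role of the contraction constant $\Cff$ in (H4) visible and reuses the full interaction catalog already developed for Cases~4 and~6.

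One small correction: your claim that immediately after merging $\mu_i(u_+)=\mu_i^\sharp(u_-)+\mathcal{O}(|\Npm|\cdot|\CUp|)$ is not justified by the kinetic relation (which fixes $u_m=\Phi_i^\flat(u_\ell)$, not $u_r$). What is true, and what you actually need, is the inequality $\mu_i(u_+)\geq \mu_i^\sharp(u_-)-\mathcal{O}(\text{quadratic})$, which follows from the merging condition~\eqref{merging_condition} of Lemma~\ref{lem:condition-sm}. This only strengthens your argument, since the coordinate gap to be traversed before the next splitting is then at least~$\eta$.
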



Before the nucleation condition can be related to the signed variation of the strong waves during one splitting-merging 
cycle, we need to identify all the waves involved in the interactions. Figure \ref{fig:nucleation} 
illustrates the waves in a splitting-merging cycle. Suppose that a classical shock $\Cpm$
interacts with a weak wave at time $t_0$ and splits into a nonclassical shock $\Npm(t_0+)$ and a faster
classical shock $\CUp(t_0+)$.  The nucleation condition \eqref{nucl:nucleation_condition} is satisfied at time $t_0$. We assume that
the nonclassical shock $\Npm$ persists until a time $t_f$ at which the classical shock $\CUp$ 
crosses the nonclassical shock and results in a single outgoing classical shock $\Cpm(t_f+)$. Suppose that
the left-hand state of $\Npm(t)$ at some time $t$ is denoted by $u_{\ell}(t)$ while the right-hand state of
$\CUp(t)$ is written as $u_r(t)$. Note that these are also the left and right-hand states of the classical 
shock $\Cpm$ when it is present. Immediately after the splitting, the intermediate state $u_m(t_0+)$
is well-defined, just as $u_m(t_f-)$ is well-defined immediately prior to the merging.

\begin{lemma} \label{lem:condition-sm}
When nucleation is present, a single splitting-merging cycle from $t=t_0$ to $t_f$ satisfies
\begin{align}    \label{nucl:condition}
    \eta < & \, \Big( \mu_i\big( \Phi^{\sharp}_i\big( u_{\ell}(t_0+) \big) \big) 
                              - \mu_i\big( \Phi^{\sharp}_i\big( u_{\ell}(t_f-) \big) \big) \Big) 
               + \Big( \mu_i\big(  u_{r}(t_f-) \big) - \mu_i\big( u_{r}(t_0+) \big)\Big).
\end{align}
\end{lemma}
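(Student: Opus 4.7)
The plan is to evaluate the quantity $\mu_i(\Phi^\sharp_i(u_\ell))-\mu_i(u_r)$ at both endpoints of the splitting-merging cycle. I will show that it is bounded below by $\eta$ at $t=t_0+$ (through the nucleation violation combined with the uniform gap $\mu_i(\Phi^\sharp_i(u))-\mu_i^n(u)>\eta$) and that it vanishes at $t=t_f-$ (through the Rankine--Hugoniot geometry of the merger). Subtracting these two estimates then yields exactly \eqref{nucl:condition}.

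For the analysis at $t=t_0+$, the nonclassical Riemann solver of Theorem~\ref{thm:solver_nucleation} produces the configuration $\Npm(t_0+)\,\CUp(t_0+)$ in place of a single classical shock only when the nucleation criterion \eqref{nucl:nucleation_condition} fails on the post-interaction Riemann data. Since the incoming configuration is $\Cpm$, this reduces to the inequality $\mu_i(u_r(t_0+))\leq \mu_i^n(u_\ell(t_0+))$. Combined with the standing assumption $\eta<\mu_i(\Phi^\sharp_i(u))-\mu_i^n(u)$, this yields the strict lower bound
$$
\mu_i\big(\Phi^\sharp_i(u_\ell(t_0+))\big) - \mu_i\big(u_r(t_0+)\big) > \eta.
$$

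For the analysis at $t=t_f-$, I use that the merger $\Npm\,\CUp\longrightarrow \Cpm$ produces a single classical shock $\Cpm(t_f+)$ connecting $u_\ell(t_f-)$ directly to $u_r(t_f-)$, so $u_r(t_f-)\in\HH_i(u_\ell(t_f-))$; the kinetic relation simultaneously forces $u_r(t_f-)\in\HH_i(\Phi^\flat_i(u_\ell(t_f-)))$, since $\CUp$ emanates from $u_m(t_f-)=\Phi^\flat_i(u_\ell(t_f-))$. Linearity of Rankine--Hugoniot in $(u,f(u))$ implies that, in the neighborhood under consideration, the common states of $\HH_i(u_\ell)$ and $\HH_i(\Phi^\flat_i(u_\ell))$ reduce to $\{u_\ell,\ \Phi^\flat_i(u_\ell),\ \Phi^\sharp_i(u_\ell)\}$, and only the last corresponds to a non-trivial classical Lax shock from $u_\ell$. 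Hence $\mu_i(u_r(t_f-))=\mu_i(\Phi^\sharp_i(u_\ell(t_f-)))$, and subtracting this equality from the strict lower bound above gives \eqref{nucl:condition} after straightforward rearrangement.

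The main obstacle is to justify the merger identity $u_r(t_f-)=\Phi^\sharp_i(u_\ell(t_f-))$ rigorously within the front-tracking scheme. By Theorem~\ref{thm:solver_nucleation}, the two-wave configuration $\Npm\,\CUp$ is available exactly when $\mu_i^\flat(u_\ell)\leq \mu_i(u_r)<\mu_i(\Phi^\sharp_i(u_\ell))$, while a single outgoing $\Cpm$ connecting $u_\ell$ to $u_r$ forces $u_r\in\HH_i(u_\ell)$, and hence, by the Hugoniot intersection just described, $\mu_i(u_r)=\mu_i(\Phi^\sharp_i(u_\ell))$. The transition between the two wave configurations can therefore only occur along the common boundary $\mu_i(u_r)=\mu_i(\Phi^\sharp_i(u_\ell))$, which delivers the sharp identity required. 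This argument is the system-case analogue of Lemma~5.2 of \cite{LL}.
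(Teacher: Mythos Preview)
Your treatment at $t=t_0+$ is fine and coincides with the paper's: the failure of the nucleation criterion together with the uniform gap $\mu_i(\Phi^\sharp_i(u))-\mu_i^n(u)>\eta$ gives $\mu_i(\Phi^\sharp_i(u_\ell(t_0+)))-\mu_i(u_r(t_0+))>\eta$.

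The argument at $t=t_f-$, however, has a genuine gap. You try to obtain the \emph{equality} $\mu_i(u_r(t_f-))=\mu_i(\Phi^\sharp_i(u_\ell(t_f-)))$ by arguing that $u_r(t_f-)$ lies simultaneously on $\HH_i(u_\ell(t_f-))$ and on $\HH_i(\Phi^\flat_i(u_\ell(t_f-)))$ and then identifying the intersection. Two things break here. First, the claim $u_r(t_f-)\in\HH_i(u_\ell(t_f-))$ is false for systems: the Riemann problem posed at $t_f$ between $u_\ell(t_f-)$ and $u_r(t_f-)$ produces $\Cpm'$ \emph{together with small waves in the families $j\neq i$} (this is exactly the content of Lemma~\ref{case:2}), so the outgoing classical shock connects nearby states $u_\ell',u_r'$, not $u_\ell(t_f-),u_r(t_f-)$ themselves. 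Second, and more fundamentally, the merger is not triggered by the Riemann solver ``switching outputs'' at a boundary in state space; in the front-tracking scheme it is triggered by the physical collision of the two fronts, which occurs once the speed of $\Npm$ exceeds that of $\CUp$. At the instant of collision the states $u_\ell,u_m,u_r$ are simply whatever they happen to be after the preceding weak interactions, and there is no mechanism forcing $u_r$ onto the locus $\mu_i(u_r)=\mu_i(\Phi^\sharp_i(u_\ell))$.

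The paper's argument is both simpler and correct: the collision requires $\lamb_i(u_m(t_f-),u_r(t_f-))<\lamb_i(u_\ell(t_f-),u_m(t_f-))=\lamb_i(u_m(t_f-),\Phi^\sharp_i(u_\ell(t_f-)))$. Since both $u_r(t_f-)$ and $\Phi^\sharp_i(u_\ell(t_f-))$ lie on $\HH_i(u_m(t_f-))$ on the side where the shock speed is monotone in $\mu_i$, this speed inequality is equivalent to the \emph{strict inequality} $\mu_i(\Phi^\sharp_i(u_\ell(t_f-)))<\mu_i(u_r(t_f-))$. Subtracting this from the $t_0+$ bound gives \eqref{nucl:condition} directly. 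Note that only an inequality is needed (and only an inequality is true), so your attempted equality is both unavailable and unnecessary.
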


\begin{proof}
The nucleation condition states that 
\be        \label{nucleation_condition}
       0 < \eta < \mu_i\big( \Phi^{\sharp}_i\big(u_{\ell}(t_0+)\big) \big) - \mu_i\big(u_{r}(t_0+)\big), 
\ee
while the subsequent merging requires that the speed of $\Npm(t_f-)$ be greater than 
that of $\CUp(t_f-)$, or more specifically
\be
          a \big( u_r(t_f-), u_m(t_f-) \big) <  a \big( u_{\ell}(t_f-), \Phi^{\flat}_i( u_{\ell}(t_f-) ) \big) 
                = a \big( \Phi^{\sharp}_i( u_{\ell}(t_f-) ), \Phi^{\flat}_i( u_{\ell}(t_f-) ) \big).
\ee
Since $u_m(t_f-)= \Phi^{\flat}_i\big( u_{\ell}(t_f-) \big)$, this inequality on the speeds
is equivalent to
\be         \label{merging_condition}
   \mu_i \Big( \Phi^{\sharp}_i\big( u_{\ell}(t_f-) \big) \Big) < \mu_i\big(  u_r(t_f-) \big).
\ee
Taking \eqref{nucleation_condition} and \eqref{merging_condition} together, we obtain that each
 splitting-merging cycle with nucleation must satisfy \eqref{nucl:condition}.
\end{proof}   

Analysis of the two terms in \eqref{nucl:condition} will require us to relate variations 
in $\Npm$ and $\CUp$ to the variation in $u_{\ell}(t)$ and  $u_r(t)$. The left and right hand
states can be modified by waves entering the region between the strong shocks, {\it but also
by waves exiting it.} On the other hand, we will show that the strength of the waves 
exiting the domain is controlled by the strength of the waves entering the domain. It is
this last part that will make the analysis a little bit messier than one would have hoped.

\psfrag{t0}{$t=t_0$}
\psfrag{tf}{$t=t_f$}
\psfrag{t}{$t$}
\psfrag{x}{$x$}
\psfrag{Npm}{$\Npm$}
\psfrag{Cpm}{$\Cpm$}
\psfrag{CUp}{$\CUp$}

\psfrag{alphaL}{$\alpha^L$}
\psfrag{alphaR}{$\alpha^R$}
\psfrag{betaL}{$\beta^L$}
\psfrag{betaR}{$\beta^R$}
\psfrag{alpha2R}{$\widetilde{\alpha}^R$}
\psfrag{beta2L}{$\widetilde{\beta}^L$}
\psfrag{beta2R}{$\widetilde{\beta}^R$}
\psfrag{jgei}{$j > i$}
\psfrag{jlei}{$j < i$}

\begin{figure}
\centering
\includegraphics[width=0.65\linewidth]{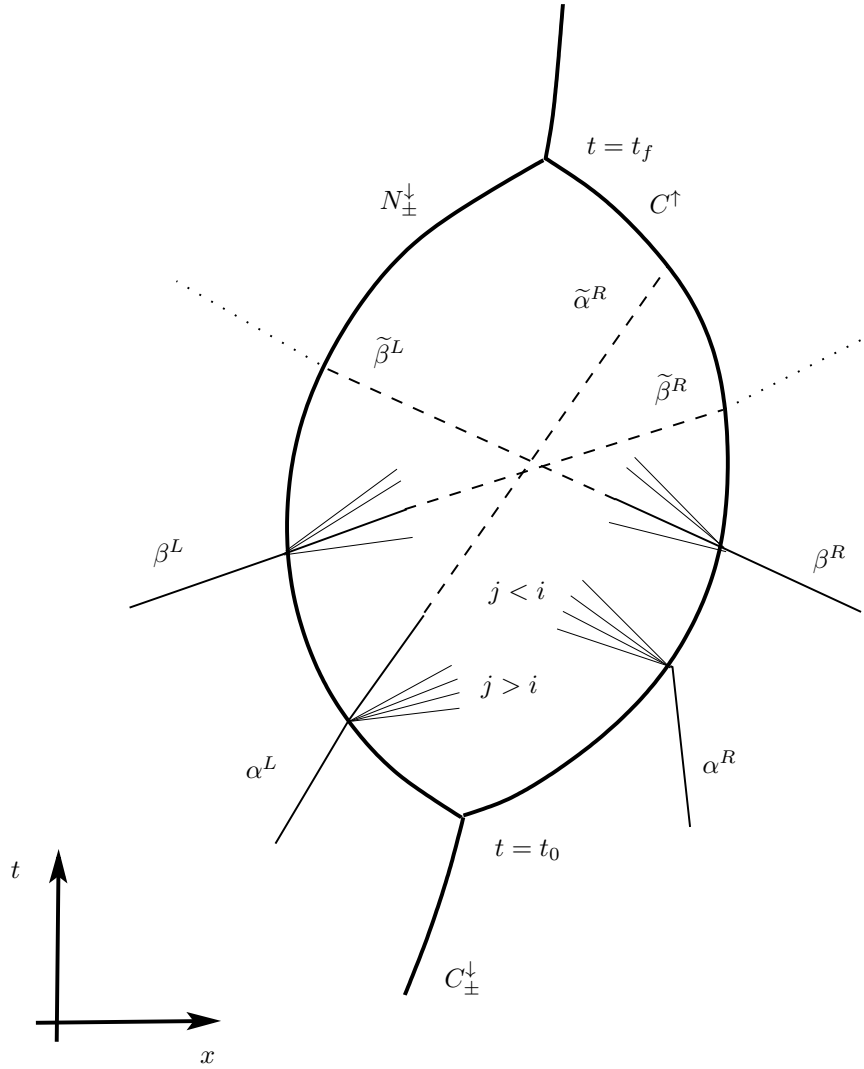}
 \caption{Illustration of the trajectories of the waves involved 
 in a single splitting-merging cycle. The $\alpha$ 
 waves belong to the family $i$, the non-convex family, and the $\beta$ waves
 are those belonging to families $j \neq i$.}
  \label{fig:nucleation}
  \end{figure}

We now describe all the interactions with $\Npm$ and $\CUp$
that could modify the left and right-hand states. Let $\alpha^L_n$,
$n=1,\ldots,n^L$ be the $i$-waves that reach $\Npm$ from the
left and let $\alpha^R_n$, $n=1,\ldots, n^R$ be the $i$-waves
that interact  with $\CUp$ from the right. The weak waves
$\{  \alpha^L_n\}_n$ have interactions of Case 4. with $\Npm$ while
the waves $\{  \alpha^R_n\}_n$ have interactions with $\CUp$ of Case 3.
Similarly, let $\widetilde{\alpha}^R_n, n=1, \ldots, \widetilde{n}^R$ be the 
$i$-waves crossing $\CUp$ from the left. Since $\Npm$ is an undercompressive,
no $i$-waves can interact with it from the right.

Let $\beta^L_{j,n}$, $n=1, \ldots, n^L_j$ be the weak $j$-waves with $j > i$ that interact
with $\Npm$ from the left and define $\beta^R_{j,n}$, $n=1,\ldots,n^R_j$ be
the $j$-waves with $j< i$ interacting with $\CUp$ from the right. The waves $\beta^L_{j,n}$
have interactions described by Case 6. while the waves $\beta^R_{j,n}$ have
interactions studied in Case 7. Let $\widetilde{\beta}^L_{j,n}, n=1, \ldots, \widetilde{n}^L_j$
be the $j$-waves with $j<i$ interacting with $\Npm$ from the right and let 
$\widetilde{\beta}^R_{j,n}, n=1, \ldots, \widetilde{n}^R_j$ be the $j$-waves with
$j>i$ reaching $\CUp$ from the left.

Observe that the interactions
of $\Npm$ and $\CUp$ with the waves 
$\alpha^L, \alpha^R, \widetilde{\alpha}^R, \beta^L, \widetilde{\beta}^L, \beta^R, \widetilde{\beta}^R$ generate new weak 
$i$-waves but all waves $\alpha^*, \beta^*$ generate $j$-waves between $\Npm$ and $\CUp$
These secondary weak waves will need to be accounted for. Finally, recall our earlier
notation $y^h(t)$ for the position of $\Npm$, $z^h(t)$ for the position of $\CUp$ and
$\SMdomain \subset \mathbb{R}^+ \times \mathbb{R}$ for the region between these
shocks.

The objective is to demonstrate that 
\begin{equation}
             \big[ \myW + K Q \big]_{t_0+}^{t_f-} \leq - \eta < 0,
\end{equation}
since it will imply that if  $\myW+KQ$ is a priori bounded in time, and therefore that the
splitting-merging cycles can only occur a finite number of times. As Lemma \ref{lem:condition-sm}
indicates, the quantity $-\eta$ can be bounded below  by the total change
in $u_{\ell}$ and $u_r$, which is itself, up to linear terms, the sum of the
strengths of the waves crossing $\Npm$ and $\CUp$ during one splitting-merging cycle; 
see Lemma \ref{lem:var_states_sm}.
The key observation is that waves crossing $\Npm$ from the left will belong to families
$j \geq i$ and waves crossing $\CUp$ will belong to families $j < i$, hence
the signed variation of the $i$-waves crossing $\Npm$ will be the same as the signed variation of the
$i$-waves interacting with $\CUp$ from the left. The trick will be to correctly 
account for changes in $Q$ that allow us to reduce the interactions to linear superpositions.

The change in $\myW + KQ$ is the sum of the changes during all interactions
occuring between time $t_0+$ and $t_f-$. Our earlier work has already shown that
$[\myW+KQ]$ is always negative during these interactions but we will examine only interactions involving at least
one wave among $\Npm, \CUp$ and the waves in $\SMdomain$. The other
interactions will lead to arbitrary decreases in $\myW+KQ$ that cannot be
controlled.

\subsubsection*{Weak interactions with $\Npm$}

We begin by examining only interactions between $i$-waves $\alpha^L_n$
and $\Npm$. These cross $\Npm$ and a fraction of the wave continues to travel
in $\SMdomain$. Secondary waves of families $j < i$ are created and propagate left of 
$\Npm$ while secondary waves of families $j > i$ propagate into $\SMdomain$.
Lemmas \ref{case:4} and \ref{Qcase:4} quantify these interactions, which are then
summarized in \eqref{Gcase:4},
\begin{equation}     \label{alphaL_V}
             [\myW+ KQ] \leq \big( -\kappaL + (1+\Cff) \kappaM \big) \,  |\alpha^L_n|.
\end{equation}
Similar calculations show that
\begin{equation}    \label{alphaL_mu}
           [\mu_i(u_{\ell})]  = -\Sgn(\alpha^L_n) |\alpha^L_n| + \mathcal{O}\big( |\Npm| \cdot |\alpha^L_n| \big), \qquad 
           [\mu_j(u_{\ell})]  =  \mathcal{O}\big( |\Npm| \cdot |\alpha^L_n| \big),  \quad j \neq i,    
\end{equation}
where $\Sgn (\alpha) = -1$ if $\alpha$ is a shock and $+1$ if $\alpha$ is a rarefaction.

Treating now interactions between $j$-waves, $j> i$, and the strong wave $\Npm$, as described in 
Lemmas \ref{case:6} and \ref{Qcase:6}, we obtain the estimate \eqref{Gcase:6a}
\begin{equation}   \label{betaL_V}
    [\myW+ KQ] \leq \big( -\kappaLgrt + \kappaMgrt \big) |\beta^L_{j,n} |.
\end{equation}
This weak wave also modifies the left-hand state by the amount
\begin{equation}   \label{betaL_mu}
        [\mu_j(u_{\ell})]  =  -\Sgn( \beta^L_{j,n} ) |\beta^L_{j,n} | + \mathcal{O}\big( |\Npm| \cdot |\beta^L_{j,n}| \big), \qquad
        [\mu_k(u_{\ell})]  =  \mathcal{O}\big( |\Npm| \cdot |\beta^L_{j,n}| \big), \quad k \neq j.
\end{equation}

Finally, there are $j$-waves $\widetilde{\beta}^L_{j,n}$ which cross $\Npm$ from the right, after having entered 
$\SMdomain$ and these waves decrease the total variation by
\begin{equation}   \label{betaL2_V}
    [\myW+ KQ] \leq \big( -\kappaMless + \kappaLless \big) |\widetilde{\beta}^L_{j,n} |.
\end{equation}
and modify the left-hand state $\mu_k(u_{\ell})$ by the amount
\begin{equation}   \label{betaL2_mu}
        [\mu_j(u_{\ell})]  =  -\Sgn( \widetilde{\beta}^L_{j,n} ) |\widetilde{\beta}^L_{j,n} | 
                                      + \mathcal{O}\big( |\Npm| \cdot |\widetilde{\beta}^L_{j,n}| \big), \qquad
        [\mu_k(u_{\ell})]  =  \mathcal{O}\big( |\Npm| \cdot |\widetilde{\beta}^L_{j,n}| \big), \quad k \neq j.
\end{equation}

\subsubsection*{Weak interactions with $\CUp$}

The interactions between $j$-waves, and $\CUp$ are simpler to treat than those with $\Npm$
since $i$-waves cannot cross $\CUp$. On the other hand, it is important to understand that the $i$-waves
approaching $\CUp$ from the left must be the remains of the $i$-waves
that previously crossed $\Npm$.

For $i$-waves reaching $\CUp$ from the right, we find 
\begin{equation}   \label{alphaR_V}
          [\myW + K Q ] \leq -\kappaR |\alpha^R_{n}|,
\end{equation}
and that the right-hand states are modified by
\begin{equation}    \label{alphaR_mu}
           [\mu_i(u_{r})]  = -\Sgn(\alpha^R_n) |\alpha^R_n| + \mathcal{O}\big( |\CUp| \cdot |\alpha^R_n| \big), \qquad 
           [\mu_j(u_{r})]  =  \mathcal{O}\big( |\CUp| \cdot |\alpha^R_n| \big), \quad  j \neq i.    
\end{equation} 
When $i$-waves reach $\CUp$ from the left, similar estimates hold, namely
\begin{equation}   \label{alphaR2_V}
     [\myW + K Q ] \leq -\kappaM |\widetilde{\alpha}^R_{n}|,
\end{equation}
and
\begin{equation}   \label{alphaR2_mu}
           [\mu_k(u_{r})]  =  \mathcal{O}\big( |\CUp| \cdot |\widetilde{\alpha}^R_n| \big), \quad \text{ for all } k.  
\end{equation}
The interactions involving $j$-waves and $\CUp$ are simple. If $j < i$, the estimate \eqref{Gcase:7b} provides
\begin{equation}   \label{betaR_V}
          [\myW + KQ ] \leq ( - \kappaRless + \kappaMless ) \, |\beta^R_{j,n}|.
\end{equation}
and changes in the state on the right-hand side are easily seen to be
\begin{equation}    \label{betaR_mu}
        [\mu_j(u_{r})]  =  -\Sgn( \beta^R_{j,n} ) |\beta^R_{j,n} | + \mathcal{O}\big( |\CUp| \cdot |\beta^R_{j,n}| \big), \qquad
        [\mu_k(u_{r})]  =  \mathcal{O}\big( |\CUp| \cdot |\beta^R_{j,n}| \big), \quad k \neq j.
\end{equation}
On the other hand, for $j > i$,  the estimate \eqref{Gcase:7a} is
\begin{equation}    \label{betaR2_V}
        [\myW + KQ ] \leq ( - \kappaMgrt + \kappaRgrt ) \, |\widetilde{\beta}^R_{j,n}|,
\end{equation}
and the right-hand states change by
\begin{equation}   \label{betaR2_mu}
        [\mu_j(u_{r})]  =  -\Sgn( \widetilde{\beta}^R_{j,n} ) |\widetilde{\beta}^R_{j,n} | 
                                      + \mathcal{O}\big( |\CUp| \cdot |\widetilde{\beta}^R_{j,n}| \big), \qquad
        [\mu_k(u_{r})]  =  \mathcal{O}\big( |\CUp| \cdot |\widetilde{\beta}^R_{j,n}| \big), \quad k \neq j.
\end{equation}

\subsubsection*{Wave crossings between $\Npm$ and $\CUp$}

The waves entering the region between the two strong shocks are either
\begin{itemize}
\item  $i$-waves $\alpha^L_n$ entering from the left;
\item $j$-waves $\beta^L_{j,n}$, $j > i$, entering from the left;  
\item $j$-waves $\beta^R_{j,n}$, $j < i$, entering from the right.
\end{itemize}
Our goal is to relate the strength of these waves to the strength of the
outgoing waves $\widetilde{\alpha}^R_n, \widetilde{\beta}^R_{j,n}$ and $\widetilde{\beta}^L_{j,n}$.

If all interactions within $\SMdomain$ were linear, then the $i$-waves
exiting on the right, namely $\widetilde{\alpha}^R_n$ would be the result of the interactions
among the fraction of the $i$-waves $\Cff \alpha^L_n$ which had crossed $\Npm$ from the left.
Similarly, since the only $j$-waves with $j>i$ ($j<i$) that entered the domain were $\beta^L_{j,n}$ ($\beta^R_{j,n}$),
then $\widetilde{\beta}^R_{j,n}$ ($\widetilde{\beta}^L_{j,n}$) would be the
result of the interactions among $\beta^L_{j,n}$ ($\beta^R_{j,n}$). In the linear
case, this would justify the following three bounds
\begin{align}
    \sum_{n=1}^{\widetilde{n}^R} |\widetilde{\alpha}^R_n| \leq & \, C_i \sum_{n=1}^{n^L} |\alpha^L_n|,   \label{crossing_est1} \\
    \sum_{n=1}^{\widetilde{n}^L_j} |\widetilde{\beta}^L_{j,n}| \leq & \, \sum_{n=1}^{n^R_j} |\beta^R_{j,n}|,\quad \text{ for all } j < i, \label{crossing_est2} \\
     \sum_{n=1}^{\widetilde{n}^R_j} |\widetilde{\beta}^R_{j,n}| \leq & \,  \sum_{n=1}^{n^L_j} |\beta^L_{j,n}|,\quad \text{ for all } j > i. \label{crossing_est3}
\end{align}

In the weakly nonlinear regime, these estimates are correct up to
second-order corrections weighted by the total strength of all 
of the secondary waves which might be generated by interactions inside $\SMdomain$.
The correct upper bounds would therefore be the same, except
for the presence of weights of the form $(1+\epsilon(t))$, which we 
can simply neglect for the purposes of this section.

\subsection{Total variation estimate for the splitting-merging cycle}

We now combine all the bounds computed to produce a negative upper bound on changes in $\myW + KQ$
during a single splitting-merging cycle. We begin by returning to the quantity $\eta$ from Lemma \ref{lem:condition-sm} which
was found to be strictly negative during a cycle.

\begin{lemma} \label{lem:var_states_sm}
Suppose $C_j^\sharp$ is the Lipschitz constant for the map $\mu_j\circ \Phi_i^\sharp$. Then
the nucleation condition is bounded by the signed variation of the incoming waves according to
\begin{align}
         \eta \leq & \,     \Big( C_i^{\sharp} +  \mathcal{O}\big(  |\Npm| \big)  \Big)  \sum_{n=1}^{n^L}  |\alpha^L_n |  
                    +  \Big(1 +  \mathcal{O}\big( |\CUp| \big) \Big)   \sum_{n=1}^{n^R}   |\alpha^R_n |     \label{nucl:eta_bound}  \\
                        & \,   + \sum_{j>i} \Big( C_j^{\sharp} + \mathcal{O}\big(  |\Npm| \big)  \Big) \sum_{n=1}^{n^L_j} | \beta^L_{j,n} | 
                                                + \Big(  1 + \mathcal{O}\big( |\CUp| \big) \Big) \sum_{j<i}  \sum_{n=1}^{n^R_j}| \beta^R_{j,n} |.  \nonumber 
\end{align}
\end{lemma}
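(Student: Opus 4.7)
The plan is to take \eqref{nucl:condition} from Lemma \ref{lem:condition-sm} as the starting point and upper bound each of its two endpoint differences by a telescoping sum of one-step increments produced by the individual interactions. Since $u_\ell(t)$ is piecewise constant and only jumps when a wave crosses $\Npm$, one has the identity
$$
\mu_i\bigl(\Phi^\sharp_i(u_\ell(t_0+))\bigr)-\mu_i\bigl(\Phi^\sharp_i(u_\ell(t_f-))\bigr)
=-\sum_{\tau}\bigl[\mu_i\circ\Phi^\sharp_i(u_\ell)\bigr](\tau),
$$
with the sum running over the interaction times of $\Npm$ with $\{\alpha^L_n\}$, $\{\beta^L_{j,n}\}_{j>i}$, and $\{\widetilde{\beta}^L_{j,n}\}_{j<i}$; an analogous telescoping identity holds for $\mu_i(u_r)$ with interaction times on $\CUp$ and families $\{\alpha^R_n\}$, $\{\widetilde{\alpha}^R_n\}$, $\{\beta^R_{j,n}\}_{j<i}$, $\{\widetilde{\beta}^R_{j,n}\}_{j>i}$.

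Next, at each interaction the one-step increment of $u_\ell$ is given explicitly by \eqref{alphaL_mu}, \eqref{betaL_mu} or \eqref{betaL2_mu}, depending on whether the crossing wave belongs to the families above; composing with $\mu_i\circ\Phi^\sharp_i$ and using its Lipschitz constants $C_j^\sharp$ on $\Bone$ translates each increment into a contribution proportional to the wave strength, plus a quadratic $\mathcal{O}(|\Npm|)$ correction inherited from the higher-order terms in the increments. Doing the same on the right with the increments of $u_r$ given by \eqref{alphaR_mu}, \eqref{alphaR2_mu}, \eqref{betaR_mu}, \eqref{betaR2_mu} (where now the Lipschitz constant of $\mu_i$ in its own direction contributes the leading factor $1$, while transverse coordinates contribute only via $\mathcal{O}(|\CUp|)$), one obtains an upper bound on the right-hand side of \eqref{nucl:condition} indexed by every wave family appearing in the cycle.

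The statement \eqref{nucl:eta_bound} only features the incoming families $\alpha^L_n$, $\beta^L_{j,n}$, $\alpha^R_n$ and $\beta^R_{j,n}$, so the outgoing families $\widetilde{\alpha}^R_n$, $\widetilde{\beta}^L_{j,n}$, $\widetilde{\beta}^R_{j,n}$ still present in the sums must be eliminated. This is achieved by applying the crossing estimates \eqref{crossing_est1}--\eqref{crossing_est3}, which bound each outgoing sum by the corresponding incoming sum (with contraction factor $\Cff$ for the $i$-family and unit weights for the transverse families, modulo quadratic corrections that are absorbed into the already-present $\mathcal{O}(|\Npm|)$ and $\mathcal{O}(|\CUp|)$ prefactors). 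Regrouping the terms according to the four families that survive yields exactly \eqref{nucl:eta_bound}.

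The main obstacle is sign bookkeeping: the telescoping identities are equalities on signed quantities, but the decisive step is to convert them into one-sided inequalities on absolute values of the wave strengths without losing the strict direction of the inequality inherited from Lemma \ref{lem:condition-sm}. This is resolved by observing that the Lipschitz bounds on $\mu_i\circ\Phi^\sharp_i$ and $\mu_i$ are automatically two-sided, so each signed increment is dominated by the corresponding unsigned wave strength; the remaining $\mathcal{O}(|\Npm|)$ and $\mathcal{O}(|\CUp|)$ corrections are quadratic in the interacting wave strengths, so they fit naturally as perturbations of the leading coefficients $C_j^\sharp$ and $1$ in the final estimate.
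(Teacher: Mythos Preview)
Your proposal is correct and follows essentially the same route as the paper: start from \eqref{nucl:condition}, telescope the endpoint differences over the interaction times using the one-step increments \eqref{alphaL_mu}--\eqref{betaR2_mu}, apply the Lipschitz bound for $\mu_i\circ\Phi_i^\sharp$ on the left-hand term, and then absorb the outgoing families $\widetilde{\alpha}^R,\widetilde{\beta}^L,\widetilde{\beta}^R$ into the incoming ones via the crossing estimates \eqref{crossing_est1}--\eqref{crossing_est3}. The only cosmetic difference is that the paper applies the Lipschitz bound once to the total endpoint difference and then telescopes each coordinate $\mu_k(u_\ell)$, whereas you telescope $\mu_i\circ\Phi_i^\sharp(u_\ell)$ first and apply Lipschitz to each increment; both orderings yield the same upper bound.
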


\begin{proof}
We observe immediately that the Lipschitz continuity  of the map
$\Phi^{\sharp}_i$ implies that there exists constants $C^{\sharp}_k, \, k=1,\ldots, n,$ such that
$$      
     \Big| \mu_i \big( \Phi^{\sharp}_i (u_{\ell}(t_0+) \big) - \mu_i \big( \Phi^{\sharp}_i( u_{\ell}(t_f-) \big) \Big|
           <  \sum_{k=1}^n C^{\sharp}_k \Big| \mu_k\big( u_{\ell}(t_0+) \big) - \mu_k\big( u_{\ell}(t_f-) \big) \Big|\,.
$$
The first term of \eqref{nucl:condition} is bounded by the sum \eqref{alphaL_mu}, \eqref{betaL_mu} and \eqref{betaL2_mu},
while the second term is bounded by the sum of \eqref{alphaR_mu}, \eqref{alphaR2_mu}, \eqref{betaR_mu} 
and \eqref{betaR2_mu}.
\begin{align*}
        \eta < & \sum_{k=1}^n C^{\sharp}_k \Big| \mu_k\big( u_{\ell}(t_0+) \big) - \mu_k\big( u_{\ell}(t_f-) \big) \Big|
                         +  \Big( \mu_i\big(  u_{r}(t_f-) \big) - \mu_i\big( u_{r}(t_0+) \big)\Big) \\
              \leq &  \,  C_i^{\sharp} \Big| \sum_{n=1}^{n^L} - \Sgn (\alpha^L_n) |\alpha^L_n | \Big| 
                                    + \sum_{n=1}^{n^L} \mathcal{O}\big(  | \Npm(t_n^L-)| \cdot |\alpha^L_n| \big) \\
                   & \,  + \sum_{j>i} C_j^{\sharp} \Big| \sum_{n=1}^{n^L_j} -\Sgn(\beta^L_{j,n})| \beta^L_{j,n} | \Big| 
                           + \sum_{j>i}\sum_{n=1}^{n^L_j} \mathcal{O}\big( |\Npm(t^L_{j,n}-)| \cdot |\beta^L_{j,n}| \big) \\
                  & \,    + \sum_{j<i} C_j^{\sharp} \Big| \sum_{n=1}^{\widetilde{n}^L_j} -\Sgn(\widetilde{\beta}^L_{j,n})| \widetilde{\beta}^L_{j,n} | \Big| 
                           + \sum_{j<i}\sum_{n=1}^{\widetilde{n}^L_j} \mathcal{O}\big( |\Npm(\widetilde{t}^L_{j,n}-)| \cdot |\widetilde{\beta}^L_{j,n}| \big) \\
                  & \,             +  \sum_{n=1}^{n^R} - \Sgn (\alpha^R_n) |\alpha^R_n | 
                                      + \sum_{n=1}^{n^R} \mathcal{O}\big(|\CUp(t^R_n-)| \cdot |\alpha^R_n| \big)  \\
                  & \,              +  \sum_{n=1}^{\widetilde{n}^R} - \Sgn (\widetilde{\alpha}^R_n) |\widetilde{\alpha}^R_n | 
                                      + \sum_{n=1}^{\widetilde{n}^R} \mathcal{O}\big(|\CUp(\widetilde{t}^R_n-)| \cdot |\widetilde{\alpha}^R_n| \big)     \\
                  & \,   + \sum_{j<i} \sum_{n=1}^{n^R_j} -\Sgn(\beta^R_{j,n})| \beta^R_{j,n} | 
                           + \sum_{j<i}\sum_{n=1}^{n^R_j} \mathcal{O}\big( |\CUp(t^R_{j,n}-)| \cdot |\beta^R_{j,n}| \big) \\
                  & \,     + \sum_{j>i} \sum_{n=1}^{\widetilde{n}^R_j} -\Sgn(\widetilde{\beta}^R_{j,n})| \widetilde{\beta}^R_{j,n} | 
                           + \sum_{j>i}\sum_{n=1}^{\widetilde{n}^R_j} \mathcal{O}\big( |\CUp(\widetilde{t}^R_{j,n}-)| \cdot |\widetilde{\beta}^R_{j,n}| \big)                               
\end{align*}
Up to second-order terms, we may assume that the strength of the strong waves is equal
to the constant strength of the unperturbed waves in $\overline{u}_0$. Our earlier bounds show that
the strength of the waves exiting $\SMdomain$ are bounded by the strength of the waves entering it, and therefore,
after simplifying the expressions above, we can rewrite the upper bound above as in \eqref{nucl:eta_bound}.
\end{proof}

We complete this section with a proof of the main theorem. The proof resembles
the accounting that was necessary in the proof of the previous lemma, except for the
manipulation of the weights $k_*$ now appearing in $\myW + KQ$.

\begin{proof}[Proof of Theorem \ref{nucleationresult}]
Variations in $\myW+KQ$ involving only waves in $V_L$ and $V_R$ always lead to decreases
in Glimm's functional, and therefore it suffices to account for changes involving interactions with at least one wave
from among $\Npm$ or $\CUp$. We invoke the bounds \eqref{alphaL_V}, \eqref{betaL_V}, \eqref{betaL2_V},
 \eqref{alphaR_V}, \eqref{alphaR2_V}, \eqref{betaR_V} and \eqref{betaR2_V}.
\begin{align*}
   [\myW + KQ] \leq & \, \sum_{n=1}^{n^L} \big( - \kappaL + (1+\Cff) \kappaM \big) \, |\alpha^L_n| \\
                                  & \, + \sum_{j>i} \sum_{n=1}^{n^L_j} \big( - \kappaLgrt + \kappaMgrt \big) \, |\beta^L_{j,n} |
                                      +  \sum_{j<i} \sum_{n=1}^{\widetilde{n}^L_j} \big( - \kappaMless + \kappaLless \big) \, |\widetilde{\beta}^L_{j,n} | \\
                                  & \, + \sum_{n=1}^{n^R} - \kappaR |\alpha^R_n| + \sum_{n=1}^{\widetilde{n}^R} - \kappaM | \widetilde{\alpha}^R_n| \\
                                  & \,  + \sum_{j<i} \sum_{n=1}^{n^R_j} \big( - \kappaRless + \kappaMless \big) \, |\beta^R_{j,n} |
                                      +  \sum_{j>i} \sum_{n=1}^{\widetilde{n}^R_j} \big( - \kappaMgrt + \kappaRgrt \big) \, |\widetilde{\beta}^R_{j,n} |   
\end{align*}
We observe that up to second-order terms
$$
 (-\kappaL+(1+\Cff)\kappaM) \sum_{n=1}^{n^L} |\alpha^L_n| - \kappaM \sum_{n=1}^{\widetilde{n}^R} |\widetilde{\alpha}^R_n| 
      \leq  (-\kappaL+ \kappaM) \sum_{n=1}^{n^L} |\alpha^L_n|.
$$
Similarly, since the weights are negative, the bounds \eqref{crossing_est1}-\eqref{crossing_est3} imply that
\begin{align*}
  \sum_{j<i}\sum_{n=1}^{\widetilde{n}^L_j} (-\kappaMless + \kappaLless) |\widetilde{\beta}^L_{j,n}| 
              \leq & \,    \sum_{j<i}\sum_{n=1}^{n^L_j} (-\kappaMless + \kappaLless) |\beta^L_{j,n}|, \\
  \sum_{j>i}\sum_{n=1}^{\widetilde{n}^R_j} (-\kappaMgrt + \kappaRgrt) |\widetilde{\beta}^R_{j,n}| 
              \leq & \,    \sum_{j>i}\sum_{n=1}^{n^R_j} (-\kappaMgrt + \kappaRgrt) |\beta^R_{j,n}|.           
\end{align*}
These bounds allow us to further simplify the estimates to
\begin{align}                                      
   [\myW + KQ]    \leq & \, \big( - \kappaL +  \kappaM \big) \sum_{n=1}^{n^L} |\alpha^L_n| -\kappaR \sum_{n=1}^{n^R} |\alpha^R_n| \nonumber \\
                                   & \,+  \big( - \kappaLgrt  + \kappaRgrt \big) \,  \sum_{j>i}\sum_{n=1}^{n^L_j} |\beta^L_{j,n} |  
                                         + \big( - \kappaRless + \kappaLless \big) \, \sum_{j<i}  \sum_{n=1}^{n^R_j} |\beta^R_{j,n} |.  \label{sm:deltaTV}
\end{align}
For the choices of weights $k_*$ proposed in Lemma \ref{lem:weights}, we find that $\kappaR =1$ and
\begin{align*}
    -\kappaL + \kappaM = & \, -(1+\Cff + \zeta) + 1 = -\Cff - \zeta, \\
    -\kappaLgrt + \kappaRgrt = & \, -(1+\zeta) + (1-\zeta) = - 2 \zeta, \\
    -\kappaRless + \kappaLless = & \, -(1+\zeta) + (1-\zeta) = -2 \zeta. 
\end{align*} 
In order to bound above the negative quantity  \eqref{sm:deltaTV} by a multiple of $-\eta$, it suffices to observe
that the weights on \eqref{nucl:eta_bound} are all bounded uniformly, and hence for every fixed positive value of $\zeta$,  
there exists a positive constant $c$ such that
$$
  [\myW + KQ]  \leq -c \eta.
$$
Splitting and mergings cannot proceed indefinitely since each cycle leads $W+KQ$ to 
decrease by a constant amount. The constant $c \eta$ being uniform as 
the front-tracking scheme converges to a weak solution, the result also 
holds for that weak solution.  
\end{proof}


\end{document}